\newcolumntype{L}[1]{>{\raggedright\let\newline\\\arraybackslash\hspace{0pt}}m{#1}}
\newcolumntype{C}[1]{>{\centering\let\newline\\\arraybackslash\hspace{0pt}}m{#1}}
\newcolumntype{R}[1]{>{\raggedleft\let\newline\\\arraybackslash\hspace{0pt}}m{#1}}
\tikzset{/tikz/commutative diagrams/every label/.append style={font=}}
\newcommand{\set}[1]{\left\{ #1 \right\}}
\newcommand{\generate}[1]{\langle #1 \rangle}
\DeclareMathOperator{\Aut}{Aut}
\newcommand{\Gal}{\text{Gal}}
\newcommand{\ST}[0]{\text{ST}}
\DeclareMathOperator{\Spec}{Spec}
\newcommand{\Proj}[0]{\text{Proj}}
\newcommand{\GL}[0]{\text{GL}}
\newcommand{\SL}[0]{\text{SL}}
\newcommand{\Gr}[0]{\text{Gr}}
\newcommand{\Sp}[0]{\text{Sp}}
\newcommand{\G}[0]{\mathbb{G}}
\newcommand{\GSp}[0]{\text{GSp}}
\newcommand{\R}[0]{\mathbb{R}}
\newcommand{\Q}[0]{\mathbb{Q}}
\newcommand{\Z}[0]{\mathbb{Z}}
\newcommand{\E}[0]{\mathbb{E}}
\newcommand{\C}[0]{\mathbb{C}}
\newcommand{\sC}[0]{\mathcal{C}}
\newcommand{\A}[0]{\mathbb{A}}
\newcommand{\F}[0]{\mathbb{F}}
\newcommand{\sF}[0]{\mathcal{F}}
\newcommand{\sL}[0]{\mathcal{L}}
\newcommand{\sO}[0]{\mathcal{O}}
\newcommand{\sU}[0]{\mathcal{U}}
\newcommand{\sQ}[0]{\mathcal{Q}}
\newcommand{\opp}[0]{\text{opp}}
\newcommand{\Tr}[0]{\text{Tr}}
\newcommand{\aff}[0]{\text{aff}}
\newcommand{\red}[0]{\text{red}}
\newcommand{\Lie}[0]{\text{Lie}}
\newcommand{\diag}[0]{\text{diag}}
\newcommand{\Mloc}[0]{M^\text{loc}}
\newcommand{\Bl}[0]{\text{Bl}}
\newcommand{\MlocGL}[0]{M^\text{loc}_\GL}
\newcommand{\MlocGSp}[0]{M^\text{loc}_\GSp}
\newcommand{\sG}[0]{\mathcal{G}}
\newcommand{\sA}[0]{\mathcal{A}}
\newcommand{\sZ}[0]{\mathcal{Z}}
\newcommand{\sW}[0]{\mathcal{W}}
\newcommand{\sE}[0]{\mathcal{E}}
\newcommand{\fund}[0]{\text{fund}}
\newcommand{\sAGL}[0]{\mathcal{A}^\GL}
\newcommand{\sAGSp}[0]{\mathcal{A}^\GSp}
\newcommand{\Sh}[0]{\text{Sh}}
\newcommand{\Adm}[0]{\text{Adm}}
\newcommand{\Iw}[0]{\text{Iw}}
\newcommand{\sheafhom}[0]{\mathcal{H}\textit{\kern -2pt om}}
\newcommand{\characteristic}[0]{\text{char}}
\newcommand{\bal}[0]{\text{bal}}
\newcommand{\norm}[0]{\text{norm}}
\newcommand{\dotsa}{\cdots}
\long\def\/*#1*/{}
\newtheorem{theorem}{Theorem}
\newtheorem{proposition}[theorem]{Proposition}
\newtheorem{definition}[theorem]{Definition}
\newtheorem{lemma}[theorem]{Lemma}
\newtheorem{corollary}[theorem]{Corollary}
\theoremstyle{remark}
\newtheorem{remark}[theorem]{Remark}
\newtheorem{example}[theorem]{Example}
\begin{document}

\title{Integral models of certain PEL Shimura varieties with $\Gamma_1(p)$-type level structure}

\author{Richard Shadrach \footnote{Rice University, shadrach@rice.edu}}

\maketitle

\begin{abstract}
 We study $p$-adic integral models of certain PEL Shimura varieties with level subgroup at $p$ related to the $\Gamma_1(p)$-level subgroup in the case of modular curves. We will consider two cases: the case of Shimura varieties associated with unitary groups that split over an unramified extension of $\mathbb{Q}_p$ and the case of Siegel modular varieties. We construct local models, i.e. simpler schemes which are \'{e}tale locally isomorphic to the integral models. Our integral models are defined by a moduli scheme using the notion of an Oort-Tate generator of a group scheme. We use these local models to find a resolution of the integral model in the case of the Siegel modular variety of genus 2. The resolution is regular with special fiber a nonreduced divisor with normal crossings.
\end{abstract}

\newcommand{\LambdaNotSplit}[0]{\Lambda}
\newcommand{\omegaNotSplit}[0]{\omega}

\newcommand{\LambdaSplitOne}[0]{e_{11}\Lambda}
\newcommand{\omegaSplitOne}[0]{\sF}

\newcommand{\LambdaSplitTwo}[0]{\LambdaSplitOne^*}
\newcommand{\omegaSplitTwo}[0]{\omegaSplitOne^*}

\newcommand{\p}{\mathfrak{p}}

 \section{Introduction}
  In the arithmetic study of Shimura varieties, one seeks to have a model of the Shimura variety over the ring of integers $\sO_E$, where $E$ is the completion of the reflex field $\E$ at some finite place $\p$. Denote by $\Sh_K(\G,X)$ the Shimura variety given by the Shimura datum $(\G,X)$ and choice of an open compact subgroup $K = \prod_\ell K_\ell \subset \G(\A_f)$, where $\A_f$ is the ring of finite rational ad\`{e}les. For Shimura varieties of PEL-type, which are moduli spaces of abelian varieties with certain (polarization, endomorphism, and level) structures, one can define such an integral model by proposing a moduli problem over $\sO_E$. The study of such models began with modular curves by Shimura and Deligne-Rapoport. More generally, Langlands, Kottwitz, Rapoport-Zink, Chai, and others studied these models for various types of PEL Shimura varieties. The reduction modulo $\p$ of these integral models is nonsingular if the factor $K_p \subset \G(\Q_p)$ is chosen to be ``hyperspecial'' for the rational prime $p$ lying under $\p$. However if the level subgroup $K_p$ is not hyperspecial, usually singularities occur. It is important to determine what kinds of singularities can occur, and this is expected to be influenced by the level subgroup $K_p$.

  In order to study the singularities of these integral models, significant progress has been made by finding ``local models''. These are schemes defined in simpler terms which control the singularities of the integral model. They first appeared in \cite{DP} for Hilbert modular varieties and in \cite{dJ} for Siegel modular varieties with Iwahori level subgroup. More generally in \cite{RZ}, Rapoport and Zink constructed local models for PEL Shimura varieties with parahoric level subgroup.

  In \cite{GortzGL} G\"{o}rtz showed that in the case of a Shimura variety of PEL-type associated with a unitary group which splits over an unramified extension of $\Q_p$, the Rapoport-Zink local models are flat with reduced special fiber. In \cite{GortzGSp}, the same is shown for the local models of Siegel modular varieties. On the other hand, Pappas has shown that these local models can fail to be flat in the case of a ramified extension \cite{P}. In \cite{PR1}, \cite{PR2}, and \cite{PR3}, Pappas and Rapoport give alternative definitions of the local models which are flat. More recently in \cite{PZ}, Pappas and Zhu have given a general group-theoretic definition of the local models which, for PEL cases, agrees with Rapoport-Zink local models in the unramified case and the alternative definitions in the ramified case.
  
  In this article, we will consider two particular types of Shimura varieties. First the unitary case, where the division algebra $B$ of dimension $n^2$ has center $F$, an imaginary quadratic extension of a totally real finite extension $F^+$ of $\Q$ which is unramified at $p$. We will make assumptions on $p$ so that the unitary group $\G$ in the Shimura datum splits over an unramified extension of $\Q_p$ as $\GL_n \times \G_m$. The second case is that of the Siegel modular varieties where the group in the Shimura datum is $\G = \GSp_{2n}$. We will refer to this as the symplectic case.
  
  We will consider various level subgroups $K = K_pK^p$. We will always choose $K^p$ sufficiently small so that the moduli problems considered below are representable by schemes. Our constructions are based off of the Rapoport-Zink integral and local models in the case where $K_p$ is an Iwahori subgroup of $\G(\Q_p)$. In all the situations we consider, $G = \G_{\Q_p}$ extends to a reductive group over $\Z_p$ and one can take an Iwahori subgroup as being the inverse image of a Borel subgroup of $G(\F_p)$ under the reduction $G(\Z_p) \rightarrow G(\F_p)$. Note that in the case of modular curves, these $K$ are called $\Gamma_0(p)$-level subgroups. However there is some ambiguity in calling such $K$ a $\Gamma_0(p)$-level subgroup for a PEL Shimura variety; indeed one may consider more generally a parahoric subgroup. As such, we will refer to these $K$ as an $\Iw_0(p)$-level subgroups.
  
  In the unitary case, we refer to $K$ as being an $\Iw_1(p)$-level subgroup when $K_p$ is the pro-unipotent radical of an Iwahori. In the presence of an $\Iw_0(p)$-level subgroup, the resulting integral model $\sAGL_0$ admits a moduli description in terms of chains of isogenies of abelian schemes $A_0 \rightarrow A_1 \rightarrow \dots \rightarrow A_n$ of dimension $n^2$ and degree $p^{2n}$, subject to certain additional conditions. Using Morita equivalence, we will associate with each $\ker(A_i \rightarrow A_{i+1})$ a finite flat group scheme of order $p$ denoted by $G_i$. The moduli problem defining the integral model with $\Iw_1(p)$-level, given in Section \ref{sect the integral model sAGL_1}, is then to also include a choice of Oort-Tate generator for each $G_i$ (see Section \ref{sect the integral model sAGL_1} for the notion of an Oort-Tate generator).
  
  In the symplectic case where $\G = \GSp_{2n}$, the moduli description of the integral model $\sAGSp_0$ associated with an $\Iw_0(p)$-level subgroup is again in terms of chains of isogenies of abelian schemes $A_0 \rightarrow A_1 \rightarrow \dots \rightarrow A_n$ now of dimension $n$ and degree $p$, subject to certain additional conditions. Here, we define $G_i = \ker(A_i \rightarrow A_{i+1})$. These are again finite flat group schemes of order $p$. As in the unitary case, we define the notion of an $\Iw_1(p)$-level subgroup so that the moduli description of the resulting integral model $\sAGSp_1$ is to also include a choice of Oort-Tate generator for each $G_i$. However such a subgroup is not given by the unipotent radical of an Iwahori, see Section \ref{subsect Iw_1(p)-level subgroup} for a group-theoretic description. Instead we refer to the unipotent radical of an Iwahori as being an $\Iw_1^\bal(p)$-level subgroup, and the resulting integral model $\sAGSp_{1,\bal}$ admits the following moduli description. One must not only choose an Oort-Tate generator for each $G_i$, but also for $G_i^* = \ker(\hat{A}_{i+1} \rightarrow \hat{A}_i)$, subject to a certain additional condition. Our notation reflects that of modular curves, where this has been called a Balanced $\Gamma_1(p)$-level subgroup \cite[Section 3.3]{KatzMazur}.

  In \cite{HR} Haines and Rapoport, interested in the local factor of the zeta function associated with the Shimura variety, constructed affine schemes which are \'{e}tale locally isomorphic to integral models of certain Shimura varieties where the level subgroup is the pro-unipotent radical of an Iwahori. This follows the older works of Pappas \cite{Pappas95} and Harris-Taylor \cite{HT}. Haines and Rapoport consider the case of a Shimura variety associated with a unitary group which splits locally at $p$ given by a division algebra $B$ defined over an imaginary quadratic extension of $\Q$. The cocharacter associated with the Shimura datum is assumed to be of ``Drinfeld type''.
  
  To study the singularities of the integral models that we will define, we also construct \'{e}tale local models. In order to describe our results in the unitary case, we begin by recalling the local model associated with $\Iw_0(p)$-level subgroup constructed in \cite{RZ}. In this introduction, we assume for simplicity that $F^+ = \Q$. We can choose an isomorphism $B_{\overline{\Q}_p} \cong M_n(\overline{\Q}_p) \times M_n(\overline{\Q}_p)$ so that the minuscule cocharacter $\mu : \G_{m,\overline{\Q}_p} \rightarrow G_{\overline{\Q}_p}$ is identified with 
   $$\mu(z) = \diag(1^{n-r}, (z^{-1})^r) \times \diag((z^{-1})^{n-r}, 1^r), \quad 1 \leq r \leq n-1.$$
  We will write concisely as $\mu = (0^{n-r}, (-1)^r)$. Then for a $\Z_p$-scheme $S$, an $S$-valued point of the local model $\Mloc_\GL$ of $\sAGL_0$ is determined by giving a diagram
  \begin{center}
   \begin{tikzcd}
    \sO_S^n \arrow{r}{\varphi_0} &
    \sO_S^n \arrow{r}{\varphi_1} &
    \dotsa \arrow{r}{\varphi_{n-2}} &
    \sO_S^n \arrow{r}{\varphi_{n-1}} &
    \sO_S^n \tikzset{commutative diagrams/column sep/normal=0cm} \\  
    \omegaSplitOne_0 \arrow{r} \arrow[hook]{u} &
    \omegaSplitOne_1 \arrow{r} \arrow[hook]{u} &
    \dotsa \arrow{r} &
    \omegaSplitOne_{n-1} \arrow{r} \arrow[hook]{u} &
    \omegaSplitOne_n \arrow[hook]{u} \tikzset{commutative diagrams/column sep/normal=0cm}&
   \end{tikzcd}
  \end{center}
  where $\varphi_i$ is given by the matrix $\diag(p^{i+1}, 1^{n-i-1})$ with respect to the standard basis, $\omegaSplitOne_i$ is an $\sO_S$-submodule of $\sO_S^n$, and Zariski locally on $S$, $\omegaSplitOne_i$ is a direct summand of $\sO_S^n$ of rank $r$. With $S = \MlocGL$, the determinants 
   $$\bigwedge^\text{top} \omegaSplitOne_i \rightarrow \bigwedge^\text{top} \omegaSplitOne_{i+1} \quad \text{and} \quad \bigwedge^\text{top} \sO_S^n/\omegaSplitOne_i \rightarrow \bigwedge^\text{top} \sO_S^n/\omegaSplitOne_{i+1}$$
  determine global sections $q_i$ and $q_i^*$ of the universal line bundles 
   $$\sQ_i = \left(\bigwedge^\text{top} \omegaSplitOne_i\right)^{-1} \otimes \bigwedge^\text{top} \omegaSplitOne_{i+1} \quad \text{and} \quad \sQ_i^* = \left(\bigwedge^\text{top} \sO_S^n/\omegaSplitOne_i\right)^{-1} \otimes \bigwedge^\text{top} \sO_S^n/\omegaSplitOne_{i+1}$$
  respectively.

  As shown in \cite{GortzGL}, the special fiber of the local model can be embedded into the affine flag variety for $\SL_n$ and identified with a disjoint union of Schubert cells. Let $U \subset \Mloc_\GL$ be an affine open neighborhood of the ``worst point'', i.e. the unique cell which consists of a single closed point, with $U$ sufficiently small so that each $\sQ_i^*$ is trivial. Choosing such a trivialization, we can then identify the sections $q_i^*$ with regular functions on $U$.
  \begin{theorem}
   The scheme 
    $$U_1 = \Spec_U\left(\sO[u_0, \dotsc, u_{n-1}]/(u_0^{p-1} - q_0^*, \dotsc, u_{n-1}^{p-1} - q_{n-1}^*)\right)$$
   is an \'{e}tale local model of $\sAGL_1$.
  \end{theorem}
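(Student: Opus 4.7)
The plan is to combine the local model property for $\sAGL_0$ with the Oort-Tate classification of finite flat group schemes of order $p$. By construction, $\sAGL_1 \to \sAGL_0$ is the moduli of Oort-Tate generators of the group schemes $G_0, \dotsc, G_{n-1}$, and since $U \subset \MlocGL$ is \'{e}tale-locally isomorphic to $\sAGL_0$, it suffices to describe, on the chart $U$, the scheme parameterizing a choice of generator for each $G_i$.

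By Oort-Tate theory, each $G_i$ is classified by a triple $(L_i, a_i, b_i)$ where $L_i$ is an invertible sheaf, $a_i \in \Gamma(L_i^{\otimes(p-1)})$, and $b_i \in \Gamma(L_i^{\otimes(1-p)})$ satisfy $a_i \otimes b_i = w_p$ for a fixed $w_p \in p\Z_p^\times$. The scheme of Oort-Tate generators of $G_i$ is cut out in the total space of $L_i$ by the equation $u_i^{p-1} = a_i$, so fibering over $i = 0, \dotsc, n-1$ and trivializing each $L_i$ yields a scheme over $U$ of exactly the form $\Spec(\sO[u_0, \dotsc, u_{n-1}]/(u_0^{p-1} - a_0, \dotsc, u_{n-1}^{p-1} - a_{n-1}))$.

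The central step, and the main obstacle, is to identify, on $U$, the line bundle $L_i$ with $\sQ_i^*$ and the parameter $a_i$ with $q_i^*$, up to the universal Oort-Tate unit, which is absorbed into the chosen trivialization of $\sQ_i^*$. The cotangent space $\omega_{G_i}$ at the identity, which is $L_i$ in Oort-Tate terminology, is the cokernel of $f^*\colon \omega_{A_{i+1}} \to \omega_{A_i}$ for $f\colon A_i \to A_{i+1}$. Under the Morita equivalence and the Rapoport-Zink local model isomorphism, this cokernel (on the relevant idempotent eigenspace) corresponds to the failure of the natural map between the consecutive quotients $\sO_S^n/\omegaSplitOne_i$ and $\sO_S^n/\omegaSplitOne_{i+1}$ to be an isomorphism, whose top exterior power is $\sQ_i^*$ by definition. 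The parameter $a_i$ arises from the Verschiebung on $\omega_{G_i}$ and, via the Dieudonn\'{e} comparison between the abelian side and the linear-algebraic side of the local model, matches the determinantal transition map defining $q_i^*$. This last comparison --- matching a Hopf-algebra invariant of $G_i$ with a determinantal section coming purely from the linear algebra of $\MlocGL$ --- is where most of the work lies; once it is in hand, the assembly over $i$ is formal and produces the claimed $U_1$.
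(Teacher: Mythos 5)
You have the right global architecture: $\sAGL_1$ is the fibered product of $\sAGL_0$ with the scheme of Oort--Tate generators, the generator scheme over each $OT$-factor is relatively representable by adjoining $u_i$ with $u_i^{p-1} = X_i$, and the whole theorem reduces to identifying the pulled-back Oort--Tate parameter of $G_i$ with the section $q_i^*$ up to a trivialization. This is exactly how the paper sets things up. The problem is that you do not prove the identification -- you describe it (``the parameter $a_i$ arises from the Verschiebung \dots and, via the Dieudonn\'e comparison \dots matches the determinantal transition map defining $q_i^*$'') and then concede that ``this last comparison \dots is where most of the work lies.'' That comparison \emph{is} the theorem; leaving it as a black box leaves the proof with a hole precisely at its center. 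Moreover, the direct route you sketch is delicate: the local model diagram only matches the Hodge filtration of $H^1_{dR}$ with the linear-algebra data, which naturally identifies the \emph{line bundle} $\omega_{G_i}$ with (the dual of the relevant factor of) $\sQ_i^*$ -- this is the content of the paper's Proposition 3.3 on dimensions of invariant differentials -- but it does not by itself produce an equality of \emph{sections} $a_i = q_i^*$; pinning down the section would require a genuinely finer crystalline comparison that the paper never attempts.

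The paper instead proves the identification by an indirect divisor-theoretic argument (its Proposition on the morphism to $OT$): the vanishing locus of $\psi^*(q_i^*)$ and of $\rho_i^*(X_i)$ coincide set-theoretically, because both detect exactly the locus where $\dim_k\omega_{G_i} = 1$, i.e.\ where $G_i$ is infinitesimal; both divisors are reduced because the special fiber of $\MlocGL$ is reduced (G\"ortz) and $q_iq_i^* = p$, $X_iY_i = w_p p$; and $\MlocGL$ is normal (flat, reduced special fiber, smooth generic fiber), so two sections cutting out the same reduced divisor differ by a unit $\varepsilon_i$. One then shrinks $V$ to extract a $(p-1)$th root $\tau_i$ of $\varepsilon_i$ and sends $u_i \mapsto \tau_i u_i$ to get the \'etale map $\widetilde V \to U_1$. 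Note that your remark that the unit is ``absorbed into the chosen trivialization of $\sQ_i^*$'' is too quick for the same reason: rescaling a trivialization of $L_i$ changes $a_i$ only by a $(p-1)$th power of a unit, so an arbitrary unit discrepancy can be removed only after passing to an \'etale cover where a $(p-1)$th root exists -- which is exactly the shrinking step in the paper. To repair your argument you would either have to carry out the crystalline identification of $a_i$ with $q_i^*$ in full, or replace that step with the reducedness-plus-normality argument above.
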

  By \cite{GortzGL} we can take $U = \Spec(B_\GL)$ where 
   $$B_\GL = \Z_p[a_{jk}^i, i = 0, \dotsc, n-1,\  j = 1, \dotsc, n-r,\  k=1, \dotsc, r]/I$$
  and $I$ is the ideal generated by the entries of certain matrices. To make the above theorem completely explicit, we will compute the $q_i^*$ with respect to this presentation. They are given by the strikingly simple expression $q_i^* = a_{n-r,r}^{i+1}$ for $0 \leq i \leq n-1$ where the upper index is taken modulo $n$. As a result, the integral models with $\Iw_1(p)$-level structure are reasonably well-behaved and can be explicitly analyzed (see below for an example in low dimension).
  
  For the symplectic case, our construction of local models for $\sAGSp_1$ and $\sA_{1,\bal}^\GSp$ is similar to that of the unitary case. In particular, they are explicitly defined as well.

  It is also of interest to have certain resolutions of the integral model of the Shimura variety with ``nice'' singularities, for example one which is semi-stable or locally toroidal. This problem was considered in the case of $\Iw_0(p)$-level structure by Genestier \cite{Genestier}, Faltings \cite{Faltings}, de Jong \cite{dJTalk}, and G\"{o}rtz \cite{GortzComputing} among others. Using the explicitly defined local model, and in particular the rather simple expression for $q_i^*$, we will construct a resolution of $\sAGSp_1$ in the case $n=2$. A resolution of $\sA_{1,\bal}^\GSp$ can be constructed in a similar manner, see Section \ref{sect Iw_1^bal(p) resolution} for more details.
  \begin{theorem}
   \label{thm intro resolution}
   Let $\sA_1$ denote the moduli scheme for the Siegel modular variety of genus 2 with $\Iw_1(p)$-level structure. There is a regular scheme $\widetilde{\sA}_1$ with special fiber a nonreduced divisor with normal crossings\footnote{In this article, by a ``nonreduced divisor with normal crossings'' we mean a divisor $D$ such that in the completion of the local ring at any closed point, $D$ is given by $Z(f_1^{e_1} \cdots f_t^{e_t})$ where $\set{f_1, \dots, f_t}$ are part of a regular system of parameters and the integers $e_i$ are greater than zero.} that supports a birational morphism $\widetilde{\sA}_1 \rightarrow \sA_1$.
  \end{theorem}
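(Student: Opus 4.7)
The plan is to carry out the construction entirely on the local model of $\sA_1$ and then transfer back to $\sA_1$ via the étale local isomorphism established earlier in the paper. Concretely, there is an analogue of the unitary local model $U_1$ in the symplectic setting: near the worst point $U$ of the $\Iw_0(p)$-local model $\MlocGSp$ for $n=2$, the local model of $\sA_1$ takes the form $U_1 = \Spec_U(\sO_U[u_0, u_1]/(u_0^{p-1} - q_0^*, u_1^{p-1} - q_1^*))$, where $q_0^*, q_1^*$ are explicit regular functions on $U$ whose formulas are analogous to the $q_i^* = a_{n-r,r}^{i+1}$ of the unitary case. Since $\sA_1$ is étale-locally isomorphic to this local model, a resolution constructed on $U_1$ and its open complement in $\MlocGSp_1$ will glue and descend to a resolution of $\sA_1$.

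First I would handle the locus away from the worst point. There, the Kottwitz--Rapoport stratification of the $\Iw_0(p)$-local model for genus 2 Siegel is well understood, and the local model is already regular (or has only the simplest kind of singularity) at every point outside the worst one. At such points at most one of $q_0^*$ or $q_1^*$ vanishes, and when both are nonzero the Oort--Tate cover is finite étale after inverting $p$ and is a standard Artin--Schreier/Kummer-type cover at the special fiber; the special fiber picks up multiplicities of $p-1$ along the divisors $\set{u_i = 0}$, which are exactly the normal-crossings components needed. At the worst point, I would write down the equations of $U$ explicitly in the Rapoport--Zink coordinates (for the Siegel case with $n=2$ these equations are completely explicit thanks to \cite{GortzGSp}) and then adjoin $u_0, u_1$ subject to $u_i^{p-1} = q_i^*$. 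The resulting affine scheme is singular along the intersection of the two divisors $\set{u_0 = 0}$ and $\set{u_1 = 0}$ together with the components of the Iwahori-level worst point.

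To resolve, I would perform an iterated blowup along carefully chosen ideals generated by combinations of the coordinate functions $a_{jk}^i$ of $B_\GSp$ together with $u_0$ and $u_1$; after each blowup, cover the result by affine charts and check regularity of the strict transform directly from the equations. The explicit and simple nature of $q_i^*$ makes this a finite, tractable computation in genus 2. Once regularity is in hand, verify the normal crossings condition of the footnote by computing, in the completed local ring at each closed point of $\widetilde{\sA}_1$, that the pullback of $p$ factors as a product $f_1^{e_1} \cdots f_t^{e_t}$ with the $f_i$ part of a regular system of parameters; the multiplicities $e_i$ are produced by the $(p-1)$-th powers in the Oort--Tate equations and by the blowup exponents. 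Finally, birationality of $\widetilde{\sA}_1 \to \sA_1$ is automatic since each blowup is birational and the Oort--Tate cover is generically étale.

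The main obstacle is the worst point: one must identify a sequence of blowup centers that simultaneously disentangles the Iwahori-level singularity of $U$ and the two Kummer-type singularities coming from $u_i^{p-1} = q_i^*$, and then verify regularity in every affine chart. The bookkeeping of which exceptional components appear with which multiplicities in the special fiber — necessary for the nonreduced normal crossings conclusion — is the most delicate part, but it is made feasible by the strikingly simple form of the $q_i^*$ and by the small dimension $n=2$.
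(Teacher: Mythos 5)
There is a genuine gap: your proposal never actually exhibits the resolution. You correctly set up the strategy (work on the explicit local model $U_1 = \Spec_U(\sO[u_0,u_1]/(u_0^{p-1}-q_0^*, u_1^{p-1}-q_1^*))$, resolve by blowups, transfer back by the local model diagram), but the entire content of the theorem is the identification of the sequence of blowup centers and the verification of regularity in the resulting charts, and you explicitly defer this ("one must identify a sequence of blowup centers\dots"). The paper's proof is precisely that identification: first blow up the component $\sZ_{11}$ of $\sA_0\otimes\overline{\F}_p$ (de Jong's semi-stable resolution $\widetilde{\sA}_0\to\sA_0$), then base change to $\sA_1$, then blow up the reduced preimage of the strict transform of $\sZ_{11}\cup(\sZ_{01}\cap\sZ_{10})$ (étale-locally $Z(u,v)$), and finally perform $p-2$ successive blowups of the strict transforms of $\sZ_{11}$'s preimage (étale-locally $Z(u,v,b)$), with explicit chart-by-chart computations at each stage.

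Two further points would defeat the plan as written. First, descent: you propose blowing up ``carefully chosen ideals generated by combinations of the coordinate functions $a_{jk}^i$ together with $u_0$ and $u_1$,'' but a blowup performed on the local model only induces a modification of $\sA_1$ if its center étale-locally corresponds to a globally defined subscheme of the integral model (a ``linear modification'' in the sense of Pappas). The paper arranges this by taking all centers to be closures and intersections of Kottwitz--Rapoport strata, which are intrinsic to $\sA_0$ and $\sA_1$; arbitrary ideals in the chart coordinates need not glue. Second, some of your intermediate claims are false: the $\Iw_0(p)$ local model $U_0=\Spec(\sO_{\breve E}[x,y,a,b,c]/(xy-p,\,ax+by+abc))$ is singular along the two-dimensional locus $Z(x,y,a,b)$, not only at the worst point, and the locus where both $q_0^*$ and $q_1^*$ vanish (which contains the $p$-rank zero locus $Z(x,y,ac,bc)$) is two-dimensional, so it is not true that at most one of them vanishes away from the worst point. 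Consequently the ``easy'' region is much smaller than you assume, and the singularities of $U_1$ along $Z(u_0,u_1)$ occur in codimension too low to be handled by the generic Kummer-cover argument.
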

  Moreover, we find the number of geometric irreducible components of $\widetilde{\sA}_1 \otimes \F_p$ and describe how they intersect using a ``dual complex'', see Theorem \ref{thm description of resolution of sA_1} for details. This resolution can be used to calculate the alternating trace of the Frobenius on the sheaf of nearby cycles in order to determine the local factor of the Hasse-Weil zeta function by \cite[Section 5]{ScholzeLanglands-KottwitzApproach}. This applies even when some of the multiplicities of the components are divisible by $p$ as we find in our case. We would like to work out the computation of the alternating trace in the future. Another use of this type of resolution can be found in the work of Emerton-Gee for unitary Shimura varieties \cite{Emerton-Gee}.
  
  Let us outline the construction of $\widetilde{\sA}_1$. We begin with a known semi-stable resolution $\widetilde{\sA}_0 \rightarrow \sA_0$ constructed by de Jong \cite{dJTalk}. This gives a modification (i.e. proper birational morphism) $\sA_1 \times_{\sA_0} \widetilde{\sA}_0 \rightarrow \sA_1$. The scheme $\sA_1 \times_{\sA_0} \widetilde{\sA}_0$ is not normal. Let $\sZ$ be the reduced closed subscheme of $\sA_0$ whose support is the locus of closed points where all of the corresponding kernels of the isogenies are infinitesimal. Also let $\sW$ be the unique irreducible component of the special fiber of $\sA_0$ where each kernel is generically isomorphic to $\mu_p$. Take the strict transform of $\sZ$ and $\sW$ with respect to the morphism $\widetilde{\sA}_0 \rightarrow \sA_0$. Denote by $\sZ'$ and $\sW'$ the reduced inverse image of these strict transforms with respect to the projection $\sA_1 \times_{\sA_0} \widetilde{\sA}_0 \rightarrow \widetilde{\sA}_0$. Consider the modification given by the blowup of $\sA_1 \times_{\sA_0} \widetilde{\sA}_0$ along $\sZ'$: 
   $$\Bl_{\sZ'}(\sA_1 \times_{\sA_0} \widetilde{\sA}_0) \rightarrow \sA_1 \times_{\sA_0} \widetilde{\sA}_0.$$
  We will see that $\Bl_{\sZ'}(\sA_1 \times_{\sA_0} \widetilde{\sA}_0)$ is normal. Let $\sW''$ denote the strict transform of $\sW'$ with respect to the modification $\Bl_{\sZ'}(\sA_1 \times_{\sA_0} \widetilde{\sA}_0) \rightarrow \sA_1 \times_{\sA_0} \widetilde{\sA}_0$. We arrive at $\widetilde{\sA}_1$ by first blowing up $\Bl_{\sZ'}(\sA_1 \times_{\sA_0} \widetilde{\sA}_0)$ along $\sW''$ and then blowing up each resulting modification along the strict transform of $\sW''$, stopping after a total of $p-2$ blowups. Carrying out the corresponding process on the local model, we will show by explicit computation that the resulting resolution of the local model is regular with special fiber a nonreduced divisor with normal crossings. It will then follow that $\widetilde{\sA}_1$ has these properties as well. By keeping track of how certain subschemes transform in each step of the above process, with much of this information coming from the explicit computation of the modifications of the local model, we will be able to find the number of geometric irreducible components of $\widetilde{\sA}_1 \otimes \F_p$ as well as describe how they intersect.
  
  We now describe the sections of the paper. In Section 2 we recall the construction of integral an local models of Shimura varieties in the unitary case with $\Iw_0(p)$-level subgroup as in \cite{RZ}. In Section 3 we construct integral and local models of Shimura varieties with $\Iw_1(p)$-level subgroup in the unitary case. In Section 4 we construct integral and local models of Shimura varieties with $\Iw_1(p)$- and $\Iw_1^\bal(p)$-level subgroup in the symplectic case. In Section 5 we construct the resolution mentioned in Theorem \ref{thm intro resolution} and in Section 6 we indicate how one can construct a similar resolution in the case of $\Iw_1^\bal(p)$-level.
  
  In closing, we mention that as this article was prepared, T. Haines and B. Stroh announced a similar construction of local models in order to prove the analogue of the Kottwitz nearby cycles conjecture. They relate their local models to ``enhanced'' affine flag varieties.
  
  I would like to thank G. Pappas for introducing me to this area of mathematics and for his invaluable support. I would also like to thank M. Rapoport for a useful conversation, T. Haines and B. Stroh for communicating their results, T. Haines for pointing out an error in a previous draft of this article, and U. G\"{o}rtz for providing the source for Figure \ref{gsp4 mu-permissible set} to which some modifications were made.
 \section{Integral and local model of $\sAGL_0$}
  \subsection{Integral model $\sAGL_0$}
   \label{Shimura Datum and Moduli Description}
   Let $F$ be an imaginary quadratic extension of a totally real finite extension $F^+/\mathbb{Q}$, $D$ a finite dimensional division algebra with center $F$, $*$ an involution on $D$ that induces the nontrivial element of $\Gal(F/F^+)$, and $h_0 : \C \rightarrow D_\R$ an $\R$-algebra homomorphism such that $h_0(z)^* = h_0(\overline{z})$ and the involution $x \rightarrow h_0(i)^{-1}x^*h_0(i)$ is positive. Let $p$ be an odd rational prime, and assume that $(p)$ is unramified in $F^+$ with each factor in $F^+$ splitting in $F$. We also assume that the division algebra $D$ splits over an unramified extension of $\Q_p$. We first consider the case where $F^+ = \Q$ and $D$ splits over $\Q_p$. We will return to the general case in Section \ref{The General Case}.
   
   The datum $(D, *, h_0)$ induces a PEL Shimura datum. We will briefly recall the crucial points, see \cite[Section 5]{H} for details. Set $B = D^\opp$ and $V = D$ viewed as a left $B$-module using right multiplications. Let $\G$ be the reductive group over $\Q$ defined by $\G(R) = \set{x \in D_R : x^*x \in R^\times}$ for a $\Q$-algebra $R$ and set $G = \G_{\Q_p}$. Fixing once and for all the embeddings $\C \hookleftarrow \overline{\Q} \hookrightarrow \overline{\Q}_p$, let $\mu : \G_{m,\overline{\Q}_p} \rightarrow G_{\overline{\Q}_p}$ be the minuscule cocharacter induced by $h_0$. We have that $h_0$ induces the decomposition $V_\C = V^+ \oplus V^-$ where $h_0(z \otimes 1)$ acts by $z$ on $V^+$ and by $\overline{z}$ on $V^-$. Choose an isomorphism $D_\C \cong M_n(\C) \times M_n(\C)$ such that $h_0(z \otimes 1) = \diag(\overline{z}^{n-r}, z^r) \times \diag(z^{n-r}, \overline{z}^r)$ for some $1 \leq r \leq n-1$. Then we have $\mu(z) = \diag(1^{n-r}, (z^{-1})^r) \times \diag((z^{-1})^{n-r},1^r)$ which we denote by $\mu = (0^{n-r}, (-1)^r)$. With $(p) = \p\p^*$ in $F$, we have $F_{\Q_p} = F_{\p} \times F_{\p^*}$ making $D_{\Q_p} = D_{\p} \times D_{\p^*}$. Note that $F_{\p} = F_{\p^*} = \Q_p$ and so $D_{\p}$ and $D_{\p^*}$ are $\Q_p$-algebras with $*$ inducing $D_{\p} \cong D_{\p^*}^\opp$. The assumption that $D$ splits over $\Q_p$ means $D_\p \cong M_n(\Q_p)$ and thus $G \cong \GL_{n,\Q_p} \times \G_{m,\Q_p}$. With $G$ split over $\Q_p$ we have that $E$, the reflex field at $p$, is $\Q_p$. Using the fixed embeddings $\C \hookleftarrow \overline{\Q} \hookrightarrow \overline{\Q}_p$, let $E' \subset \overline{\Q}_p$ be a finite extension of $E$ over which we have the decomposition $V_{E'} = V^+ \oplus V^-$.
   
   To define the integral model, we must specify a lattice chain $\sL$ and a maximal order $\sO_B$. Let $\xi \in D^\times$ be such that $\xi^* = -\xi$ and $\iota(x) = \xi x^* \xi^{-1}$ is a positive involution on $D$. Then $(\cdot, \cdot) : D \times D \rightarrow \Q$ defined by $(x,y) = \Tr_{D/\Q}(x\xi y^*)$ is a nondegenerate alternating pairing. Fix an isomorphism $D_{\Q_p} = D_\p \times D_{\p^*} \xrightarrow{\sim} M_n(\Q_p) \times M_n(\Q_p)$ so that $\iota$ goes to the standard involution $(X,Y) \rightarrow (Y^t, X^t)$. Set $(\chi^t, -\chi)$ to be the image of $\xi$ under this isomorphism. Let $\Lambda_i$ to be the lattice chain in $M_n(\Q_p)$ defined by $\Lambda_i = \diag((p^{-1})^i, 1^{n-i})M_n(\Z_p)$ for $0 \leq i \leq n-1$ and extend it periodically to all $i \in \Z$ by $\Lambda_{n+i} = p^{-1}\Lambda_i$. Likewise, $\Lambda_i^* = \chi^{-1} \diag(1^i, (p^{-1})^{n-i})M_n(\Z_p)$ for $0 \leq i \leq n-1$ and again extend it periodically. One may check that $(\Lambda_i \oplus \Lambda_i^*)^\perp = \Lambda_{-i} \oplus \Lambda_{-i}^*$ where
    $$(\Lambda_i \oplus \Lambda_i^*)^\perp = \set{x \in D_{\Q_p} : (x,y) \in \Z_p \text{ for all } y \in \Lambda_i \oplus \Lambda_i^*}$$
   and thus the lattice chain $\sL = (\Lambda_i \oplus \Lambda_i^*)_i$ is self-dual. Finally the maximal order $\sO_B \subset B$ is chosen so that under the identification $D_{\Q_p} \cong M_n(\Q_p) \times M_n(\Q_p)$ we have $\sO_B \otimes \Z_p = M_n^\opp(\Z_p) \times M_n^\opp(\Z_p)$. The level subgroup $K$ is taken to be a compact open subgroup of $\G(\A_f)$ of the form $K = K^pK_p$ where $K^p \subset \G(\A_f^p)$ is a sufficiently small compact open subgroup and $K_p = \Aut_{\sO_B}(\sL)$, the automorphism group of the polarized multichain $\sL$ \cite[3.23b]{RZ}.

   With this data, \cite[Definition 6.9]{RZ} gives an integral model of the Shimura variety $\sAGL_0$. It is the moduli scheme over $\Spec(\Z_p)$ representing the following functor. For any $\Z_p$-scheme $S$, $\sAGL_0(S) = \sAGL_{0,K^p}(S)$ is the set of tuples $(A_\bullet, \bar{\lambda}, \bar{\eta})$, up to isomorphism, where
   \begin{compactenum}[(i)]
    \item $A_\bullet$ is a chain $A_0 \xrightarrow{\alpha_0} A_1 \xrightarrow{\alpha_1} \dots \xrightarrow{\alpha_{n-1}} A_n$ of $n^2$-dimensional abelian schemes over $S$ determined up to prime-to-$p$ isogeny with each morphism $\alpha_i$ an isogeny of degree $p^{2n^2}$;
    \item each $A_i$ is equipped with an $\sO_B \otimes \Z_{(p)}$-action that commutes with the isogenies $\alpha_i$;
    \item there are ``periodicity isomorphisms" $\theta_p : A_{i+n} \xrightarrow{\sim} A_i$ such that for each $i$ the composition
      $$A_i \rightarrow A_{i+1} \rightarrow \dotsa \rightarrow A_{i+n} \xrightarrow{\theta_p} A_i$$
     is multiplication by $p$;
    \item the action of $\sO_B \otimes \Z_{(p)}$ satisfies the Kottwitz condition: for each $i$
      $$\textstyle \det_{\sO_S}(b; \Lie(A_i)) = \det_{E'}(b; V^+) \quad \text{for all } b \in \sO_B;$$
    \item $\bar{\lambda}$ is a $\Q$-homogeneous class of principal polarizations \cite[Definition 6.7]{RZ}; and
    \item $\bar{\eta}$ is a $K^p$-level structure \cite[Section 5]{Ko92}.
   \end{compactenum}
  \subsection{Local model diagram}
   We now describe the local model diagram
   $$\sAGL_0 \xleftarrow{\Phi} \widetilde{\sAGL_0} \xrightarrow{\Psi} \MlocGL$$
   constructed in \cite[Chapter 3]{RZ}. Let $(A_\bullet, \overline{\lambda}, \overline{\eta}) \in \sAGL_0(S)$. For an abelian scheme $A/S$ of relative dimension $n$, denote by $H_{dR}^1(A/S)^\vee$ the $\sO_S$-dual of the de Rham cohomology sheaf. It is a locally free $\sO_S$-module of rank $2n^2$ \cite[Section 2.5]{BBM} and the collection $(H_{dR}^1(A_i/S)^\vee)_i$ gives a polarized multichain of $\sO_B \otimes_{\Z_p} \sO_S$-modules of type $(\sL)$ \cite[3.23b]{RZ}. The scheme $\widetilde{\sAGL_0}$ represents the functor which associates with a $\Z_p$-scheme $S$ the set of tuples $(A_\bullet, \bar{\lambda}, \bar{\eta}, \set{\gamma_i})$, up to isomorphism, where $(A_\bullet, \bar{\lambda}, \bar{\eta}) \in \sAGL_0(S)$ and
    $$\gamma_i : H_{dR}^1(A_i/S)^\vee \rightarrow \Lambda_i \otimes_{\Z_p} \sO_S$$
   is an isomorphism of polarized multichains of $\sO_B \otimes_{\Z_p} \sO_S$-modules.  The morphism $\Phi : \widetilde{\sAGL_0} \rightarrow \sAGL_0$ is defined by sending $(A_\bullet, \bar{\lambda}, \bar{\eta}, \set{\gamma_i})$ to $(A_\bullet, \bar{\lambda}, \bar{\eta})$. Let $\sG$ be the smooth affine group scheme where $\sG(S) = \Aut(\set{\Lambda_i \otimes \sO_S})$ for a $\Z_p$-scheme $S$ \cite[Theorem 3.11]{RZ}. Then $\Phi$ is a smooth $\sG$-torsor \cite[Theorem 2.2]{P}. We now recall the definition of the local model.
   \begin{definition}\label{def RZ local model}
   \cite[Definition 3.27]{RZ}
   With $S$ a $\Z_p$-scheme, an $S$-valued point of $\MlocGL$ is given by the following data.
   \begin{compactenum}[(i)]
    \item A functor from the category $\sL$ to the category of $\sO_B \otimes_{\Z_p} \sO_S$-modules on $S$
     $$\Lambda \rightarrow \omega_\Lambda, \quad \Lambda \in \sL.$$
    \item A morphism of functors $\psi_\Lambda : \omega_\Lambda \rightarrow \Lambda \otimes_{\Z_p} \sO_S.$
   \end{compactenum}
   We require that the following conditions are satisfied.
   \begin{compactenum}[(i)]
    \item The morphisms $\psi_\Lambda$ are injective.
    \item The quotients $t_\Lambda := (\Lambda \otimes_{\Z_p} \sO_S) / \psi(\omega_\Lambda)$ are locally free $\sO_S$-modules of finite rank.  For the action of $\sO_B$ on $t_\Lambda$, we have the Kottwitz condition
     $$\textstyle\det_{\sO_S}(b; t_\Lambda) = \det_{E'}(b; V^+) \text{ for all } b \in \sO_B.$$
    \item For each $\Lambda \in \sL$, $\psi_\Lambda(\omega_\Lambda)^\text{perp} = \psi_{\Lambda^\perp}(\omega_{\Lambda^\perp})$.
   \end{compactenum}
  \end{definition}
   The definition of $\MlocGL$ is often rephrased so that $\omega_\Lambda$ is a subsheaf of $\Lambda \otimes_S \sO_S$ and the morphism $\psi_\Lambda$ is the inclusion. Note that $\sG$ acts on $\MlocGL$ by acting on $\psi_\Lambda$ through its natural action on $\Lambda \otimes \sO_S$. Set $\omegaNotSplit_{A_i} = \Lie(A_i)^\vee$, a locally free sheaf of rank $n^2$ on $S$. We have the Hodge filtration \cite[Prop. 5.1.10]{BBM}
    $$0 \rightarrow \omegaNotSplit_{\hat{A}_i} \rightarrow H_{dR}^1(A_i/S)^\vee \rightarrow \Lie(A_i) \rightarrow 0$$
   where $\hat{A}_i$ denotes the dual abelian scheme of $A_i$. The morphism $\Psi : \widetilde{\sAGL_0} \rightarrow \MlocGL$ is given by associating with each point $(A_\bullet, \bar{\lambda}, \bar{\eta}, \set{\gamma_i}) \in \widetilde{\sAGL_0}(S)$ the collection of injective morphisms $\omega_{\hat{A}_i} \rightarrow H_{dR}^1(A_i/S)^\vee \xrightarrow{\gamma_i} \Lambda_i \otimes_{\Z_p} \sO_S$.
   \begin{theorem}
    \cite[Chapter 3]{RZ} (cf.\ \cite[Section 2]{P} and \cite[Section 6]{H})
    \label{thm local model diagram}
    The morphism $\Phi$ is a $\sG$-torsor, $\Psi$ is a smooth $\sG$-equivariant morphism, and there is an \'{e}tale cover $\varphi : V \rightarrow \sAGL_0$ with a section $\sigma : V \rightarrow \widetilde{\sAGL_0}$ of $\Phi$ such that $\Psi \circ \sigma$ is \'{e}tale. That is, $\sAGL_0 \xleftarrow{\Phi} \widetilde{\sAGL_0} \xrightarrow{\Psi} \MlocGL$ forms a local model diagram in the terminology of \cite[Section 1.2]{LocalModelSurvey}.
   \end{theorem}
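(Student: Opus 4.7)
The plan is to verify the three assertions of the theorem in turn --- the torsor property of $\Phi$, the smoothness and equivariance of $\Psi$, and the existence of the étale chart $\sigma$ --- and then combine them. Most of the work is in the first two; once they are established, the chart follows from standard facts about smooth torsors and dimension counting.

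For the torsor property, note that $\Phi$ is manifestly $\sG$-equivariant, where $\sG$ acts by changing the trivializations $\gamma_i$, and injective on orbits. The nontrivial content is that étale-locally on $\sAGL_0$ every polarized multichain $(H_{dR}^1(A_i/S)^\vee)$ of $\sO_B \otimes_{\Z_p} \sO_S$-modules of type $(\sL)$ admits a trivialization, so that $\sG$ acts simply transitively on the fibers of $\Phi$. This is the local-constancy statement for polarized multichains of \cite[Appendix to Chapter 3]{RZ}: Morita equivalence and the assumption that $D$ splits over $\Q_p$ reduce the problem to trivializing ordinary projective $\sO_S$-modules of rank $n$ Zariski-locally, after which compatibility with the polarization is arranged by lifting a Lagrangian in the self-dual part of $\sL$.

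The equivariance of $\Psi$ is immediate from its definition. For smoothness, the natural tool is Grothendieck--Messing crystalline deformation theory. Given a square-zero thickening $S \hookrightarrow S'$ and a lift of a point $\Psi(x) \in \MlocGL(S)$ to $S'$, one has to produce a lift of the corresponding $(A_\bullet, \bar\lambda, \bar\eta, \{\gamma_i\})$. The trivializations $\gamma_i$ lift uniquely along the nilpotent ideal because polarized multichains deform canonically, so the task reduces to lifting the chain of abelian schemes together with its $\sO_B$-action, polarization, and prime-to-$p$ level structure. Grothendieck--Messing furnishes a canonical bijection between such lifts and lifts of the Hodge filtrations $\omega_{\hat A_i} \hookrightarrow H_{dR}^1(A_i/S)^\vee$ to $H_{dR}^1(A_i/S')^\vee$ compatible with periodicity, perpendicularity, and the Kottwitz determinant condition --- which is exactly the data of a lift to $\MlocGL(S')$. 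Moreover, because $(p)$ splits in $F/F^+$, the $\sO_B$-action is Morita-equivalent to an essentially free one, so no obstruction arises from the endomorphism structure.

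Finally, since $\sG$ is smooth over $\Z_p$, the smooth $\sG$-torsor $\Phi$ admits étale-local sections; pick an étale cover $\varphi : V \to \sAGL_0$ over which $\widetilde{\sAGL_0} \times_{\sAGL_0} V \cong V \times \sG$ and let $\sigma$ be the section corresponding to the identity. Then $\Psi \circ \sigma$ is smooth, and a dimension count using that both $\Phi$ and $\Psi$ have fibers of dimension $\dim \sG$ over their images shows $\Psi \circ \sigma$ is étale. The main obstacle in the entire argument is the smoothness of $\Psi$: one must assemble the Grothendieck--Messing correspondence for a chain of abelian schemes and verify that the Kottwitz determinant, periodicity isomorphisms, and perpendicularity conditions are all preserved by the canonical crystalline lift. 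Once this is done, the remaining verifications --- the torsor property and the construction of the étale chart --- are essentially formal given \cite[Theorem 3.11]{RZ}.
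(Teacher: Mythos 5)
The paper does not actually prove this theorem---it is quoted from \cite{RZ} (with \cite{P} and \cite{H})---so your sketch has to be measured against those standard arguments. Your identification of the two main inputs is correct and matches them: étale-local triviality of polarized multichains of type $(\sL)$ (\cite[Theorem 3.16]{RZ}) gives the torsor property of $\Phi$, and Grothendieck--Messing theory gives the formal smoothness of $\Psi$.

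The gap is in the last step. You assert that $\Psi\circ\sigma$ is smooth and then conclude étaleness by a dimension count, calling this ``essentially formal.'' It is not: a section of a smooth torsor composed with a smooth equivariant morphism need not be smooth (take $\widetilde{X}=\A^2$ with $\Phi$ and $\Psi$ the two coordinate projections and $\sigma(t)=(t,f(t))$; then $\Psi\circ\sigma=f$ is arbitrary). The dimension count only upgrades ``smooth of relative dimension zero'' to ``étale''; it cannot produce the smoothness, which is the heart of why the local model controls the singularities. The actual argument is a second application of Grothendieck--Messing: at a closed point $x$ with $y=\Psi(\sigma(v))$, the chosen trivialization restricts compatibly to all infinitesimal thickenings (the crystalline evaluations of $H^1_{dR}(A_i/S)^\vee$ are canonical, independent of the chosen lift of $A_\bullet$), and under it Grothendieck--Messing identifies the deformation functor of $\sAGL_0$ at $x$ (lifts of $(A_\bullet,\bar\lambda,\bar\eta)$) with that of $\MlocGL$ at $y$ (lifts of the Hodge filtrations subject to the Kottwitz and duality conditions); this isomorphism of complete local rings, spread out to an étale neighborhood, is what makes $\Psi\circ\sigma$ étale. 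A related slip: the $\gamma_i$ do \emph{not} lift uniquely over a square-zero thickening---the lifts form a torsor under $\ker(\sG(S')\to\sG(S))$---and if they did, $\Psi$ would be étale rather than smooth of relative dimension $\dim\sG$, so your description of the fibers of the lifting problem for $\Psi$ is internally inconsistent even though the existence statement you need for formal smoothness is true.
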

   
   \begin{remark}\label{remark corresponding closed point}
    Given a closed point $x$ of $\sAGL_0$, we will say that a closed point $y$ of $\MlocGL$ corresponds to $x$ if there exists $\varphi$ and $\sigma$ as in the above theorem, along with a closed point $v$ of $V$ such that $\varphi(v) = x$ and $\Psi \circ \sigma(v) = y$.
   \end{remark}

   The next proposition follows from the identification $\sO_B \otimes \Z_p = M_n^\opp(\Z_p) \times M_n^\opp(\Z_p)$, the Morita equivalence, and the Kottwitz condition. Let $e_{11} \in M_n^\opp(\Z_p) \times M_n^\opp(\Z_p)$ denote the idempotent of the first factor and write $\Lambda_{i,S} = \Lambda_i \otimes_{\Z_p} \sO_S$.
   
   \begin{proposition}\cite[Section 6.3.3]{H}\label{prop reduced description of MlocGL}
    Giving an $S$-valued point of $\MlocGL$ is equivalent to giving a commutative diagram
    \begin{center}
     \begin{tikzcd}
      e_{11}\Lambda_{0,S} \arrow{r}{\varphi_0} &
      e_{11}\Lambda_{1,S} \arrow{r}{\varphi_1} &
      \dotsa \arrow{r}{\varphi_{n-1}} &
      e_{11}\Lambda_{n,S} \\  
      \omegaSplitOne_0 \arrow{r} \arrow[hook]{u} &
      \omegaSplitOne_1 \arrow{r} \arrow[hook]{u} &
      \dotsa \arrow{r} &
      \omegaSplitOne_{n} \arrow[hook]{u}
     \end{tikzcd}
    \end{center}
    where $\varphi_i$ is the morphism induced from the inclusion $\Lambda_i \subset \Lambda_{i+1}$ and $\sF_i$ is a locally free $\sO_S$-module which is Zariski locally a direct summand of $\LambdaSplitOne_{i,S}$ of rank $r$.
   \end{proposition}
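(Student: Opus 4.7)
The plan is to unpack the data in Definition \ref{def RZ local model} using the three ingredients highlighted in the proposition statement, and to recognize the simpler diagram as an equivalent packaging of that data.

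\emph{First}, I would use the product decomposition $\sO_B \otimes \Z_p = M_n^\opp(\Z_p) \times M_n^\opp(\Z_p)$. The central idempotents $e_1 = (1,0)$ and $e_2 = (0,1)$ split any $\sO_B \otimes \sO_S$-module into two pieces, and since the ambient chain $\sL$ is already built as $\Lambda_i \oplus \Lambda_i^*$, this splits $\omega_{\Lambda_i \oplus \Lambda_i^*}$ compatibly into $\omega^{(1)}_i \subset \Lambda_{i,S}$ and $\omega^{(2)}_i \subset \Lambda_{i,S}^*$, each a sub-$M_n(\sO_S)$-module. Injectivity of $\psi_\Lambda$, functoriality in the chain, and the local direct summand property all decompose along this splitting.

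\emph{Second}, I would use the perpendicularity condition to discard the starred data and then apply Morita equivalence. Since the involution $*$ swaps the two factors of $\sO_B \otimes \Z_p$, the symplectic pairing on $V$ identifies $\Lambda_{i,S}^*$ with the $\sO_S$-dual of $\Lambda_{-i,S}$, so the condition $\psi_\Lambda(\omega_\Lambda)^\perp = \psi_{\Lambda^\perp}(\omega_{\Lambda^\perp})$ expresses $\omega^{(2)}_{-i}$ as the orthogonal complement of $\omega^{(1)}_i$. Combined with the periodicity $\Lambda_{i+n} = p^{-1}\Lambda_i$, all of the $\omega^{(2)}$'s and all $\omega^{(1)}$'s outside $i = 0, \ldots, n$ are determined by the sequence $\omega^{(1)}_0, \ldots, \omega^{(1)}_n$, with $\omega^{(1)}_n$ related to $\omega^{(1)}_0$ by the periodicity isomorphism. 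The Morita functor $M \mapsto e_{11} M$ then packages each $\omega^{(1)}_i \subset \Lambda_{i,S}$ as $\sF_i = e_{11}\omega^{(1)}_i \subset e_{11}\Lambda_{i,S} = \LambdaSplitOne_{i,S}$; the maps $\varphi_i$ in the diagram arise from the lattice inclusions $\Lambda_i \subset \Lambda_{i+1}$, and the local direct summand property is preserved by the equivalence.

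\emph{Third}, I would read the rank of $\sF_i$ off the Kottwitz condition. Substituting elements $b$ supported on the first factor of $\sO_B \otimes \Z_p$ into $\det_{\sO_S}(b; t_\Lambda) = \det_{E'}(b; V^+)$ and using Morita, the rank of the corresponding Morita-reduced quotient is forced to equal the dimension of the matching piece of $V^+$, which is easily computed from $h_0(z) = \diag(\bar z^{n-r}, z^r) \times \diag(z^{n-r}, \bar z^r)$. This pins down $\sF_i$ as locally free of rank $r$.

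The reverse direction — reconstructing a point of $\MlocGL$ from such a diagram — is formal: Morita recovers $\omega^{(1)}_i$, perpendicularity and periodicity produce all remaining $\omega$'s, and the rank condition in the diagram is equivalent to Kottwitz on both factors. The step I expect to be most delicate is the bookkeeping in the second step: verifying that the involution $*$ swaps the two factors of $\sO_B \otimes \Z_p$ in precisely the way needed for perpendicularity to pair $\omega^{(2)}_i$ with a shifted copy of $\omega^{(1)}$, and matching the periodicity indices correctly so that only the $n+1$ submodules $\sF_0, \ldots, \sF_n$ remain as free parameters. Everything else is a formal consequence of the product structure, Morita equivalence, and a direct dimension count using $h_0$.
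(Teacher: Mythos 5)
Your proposal is correct and follows exactly the route the paper indicates: the paper gives no proof, citing \cite[Section 6.3.3]{H} and stating only that the result ``follows from the identification $\sO_B \otimes \Z_p = M_n^\opp(\Z_p) \times M_n^\opp(\Z_p)$, the Morita equivalence, and the Kottwitz condition,'' which are precisely the three ingredients you expand on (together with the duality/periodicity bookkeeping that eliminates the starred data).
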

   In the next section we will use that for an $S$-valued point $\set{\omega_i \hookrightarrow \Lambda_{i,S}}$ of $\MlocGL$, the diagram above is given by setting $\sF_i = e_{11}\omega_i$.

 \section{The integral and local model of $\sAGL_1$}
  Throughout this section, $S$ is a $\Z_p$-scheme and $k$ is an algebraically closed field. 

  \subsection{The group schemes $G_i$}
   \label{sect the group schemes G_i}
   In order to define $\sAGL_1$, we will first associate with an $S$-valued point of $\sAGL_0$ a collection of finite flat group schemes $\set{G_i}_{i=0}^{n-1}$ each of order $p$. Then given a geometric point $x : \Spec(k) \rightarrow \sAGL_0$, we will determine the isomorphism type of the group schemes $G_i$ from the data given by a corresponding geometric point $y : \Spec(k) \rightarrow \MlocGL$ in the sense of Remark \ref{remark corresponding closed point}.
   
   Let $(A_\bullet, \overline{\lambda}, \overline{\eta}) \in \sAGL_0(S)$. For each $i \in \mathbb{Z}$, consider the $p$-divisible group $A_i(p^\infty) = \varinjlim_n A_i[p^n]$. Note that $H_i := \ker(A_i \rightarrow A_{i+1})$ is contained in $A_i(p^\infty)$. With $e_{11} \in M_n^\opp(\Z_p) \times M_n^\opp(\Z_p)$ the idempotent of the first factor, the action of $\sO_B \otimes \Z_p$ gives a chain 
    $$e_{11}A_0(p^\infty) \rightarrow e_{11}A_1(p^\infty) \rightarrow \dotsb \rightarrow e_{11}A_{n-1}(p^\infty) \rightarrow e_{11}A_n(p^\infty)$$
   of isogenies of degree $p$ whose composition with $e_{11}A_n(p^\infty) \xrightarrow{\sim} e_{11}A_0(p^\infty)$ induced by the periodicity isomorphism is multiplication by $p$. For $0 \leq i \leq n-1$, we define
    $$G_i = \ker\big(e_{11}A_i(p^\infty) \rightarrow e_{11}A_{i+1}(p^\infty)\big).$$
   Each $G_i$ is a finite flat group scheme of order $p$.
   \begin{definition}
    For a group scheme $G/S$, $\omega_G = \omega_{G/S}$ is the sheaf on $S$ given by $\varepsilon^*(\Omega_{G/S}^1)$ where $\varepsilon : S \rightarrow G$ is the identity section.
   \end{definition}
   
   When $S$ is affine and $G/S$ is a finite flat group scheme, we denote by $G^*$ the Cartier dual of $G$.
   
   \begin{proposition}\label{prop invariant differentals and quotients}
    Let $x : \Spec(k) \rightarrow \sAGL_0$ be a geometric point and let $y : \Spec(k) \rightarrow \MlocGL$ be a corresponding geometric point in the sense of Remark \ref{remark corresponding closed point}. Let $\set{\omega_i \hookrightarrow \Lambda_{i,S}}$ be the data induced by $y$ as in Definition \ref{def RZ local model} and set $\sF_i = e_{11}\omega_i$. With
     $$H_i = \ker(A_i \rightarrow A_{i+1}) \quad \text{and} \quad G_i = \ker\big(e_{11}A_i(p^\infty) \rightarrow e_{11}A_{i+1}(p^\infty)\big)$$
    we have
    \begin{compactenum}[(i)]
     \item $\dim_k \omega_{H_i^*} = \dim_k(\omegaNotSplit_{i+1}/\varphi_i(\omegaNotSplit_{i}))$;
     \item $\dim_k \omega_{H_i} = \dim_k \LambdaNotSplit_{i+1,S} / \left(\varphi_i(\LambdaNotSplit_{i,S}) + \omegaNotSplit_{i+1}\right)$;
     \item $\dim_k \omega_{G_i^*}  = \dim_k(\omegaSplitOne_{i+1}/\varphi_i(\omegaSplitOne_{i}))$; and
     \item $\dim_k \omega_{G_i} = \dim_k \LambdaSplitOne_{i+1,S} / \left(\varphi_i(\LambdaSplitOne_{i,S}) + \omegaSplitOne_{i+1}\right)$.
    \end{compactenum}
   \end{proposition}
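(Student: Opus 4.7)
\emph{Proof plan.} By Theorem~\ref{thm local model diagram}, after replacing $x$ by a point in an \'{e}tale neighborhood carrying a section of $\Phi$, we may assume $x$ lifts to a $k$-point of $\widetilde{\sAGL_0}$ mapping to $y$ under $\Psi$. The trivialization $\gamma_i$ then identifies the Hodge filtration $0 \to \omega_{\hat A_i} \to H_{dR}^1(A_i/k)^\vee \to \Lie(A_i) \to 0$ with $0 \to \omega_i \to \Lambda_{i,k} \to \Lambda_{i,k}/\omega_i \to 0$. By the functoriality of $H_{dR}^1$ along the isogeny chain, the restriction of $\varphi_i$ to $\omega_i$ is identified with the pullback $\hat\alpha_i^* \colon \omega_{\hat A_i} \to \omega_{\hat A_{i+1}}$ of the dual isogeny $\hat\alpha_i \colon \hat A_{i+1} \to \hat A_i$, and the map on quotients $\Lambda_{i,k}/\omega_i \to \Lambda_{i+1,k}/\omega_{i+1}$ induced by $\varphi_i$ is identified with $\Lie(\alpha_i)$.

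For (i) and (ii), apply the left-exact Lie functor to $0 \to H_i \to A_i \to A_{i+1} \to 0$ and to its Cartier dual $0 \to H_i^* \to \hat A_{i+1} \to \hat A_i \to 0$. This gives $\Lie(H_i) = \ker \Lie(\alpha_i)$ and $\Lie(H_i^*) = \ker \Lie(\hat\alpha_i)$, and since $\dim_k \omega_G = \dim_k \Lie(G)$ for any commutative finite flat group scheme $G$ over $k$, a dimension count suffices. Using that $\Img \Lie(\hat\alpha_i)$ and $\Img \hat\alpha_i^*$ have equal $k$-dimensions (as $k$-linear duals of each other), and that the Kottwitz condition forces $\dim_k \omega_j$ to be independent of $j$ (so also $\dim_k \Lambda_{j,k}/\omega_j$ is), one arrives at
$$\dim_k \omega_{H_i^*} = \dim_k \omega_{i+1} - \dim_k \varphi_i(\omega_i) \quad\text{and}\quad \dim_k \omega_{H_i} = \dim_k \Lambda_{i+1,k} - \dim_k(\varphi_i(\Lambda_{i,k}) + \omega_{i+1}),$$
which are the right-hand sides of (i) and (ii). Note that one cannot simply assume $\varphi_i$ is injective on $\Lambda_{i,k}$ in characteristic $p$, but the Lie-functor formulation above sidesteps this issue because only the rank of $\varphi_i|_{\omega_i}$ (and not its injectivity) enters the dimension bookkeeping.

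Parts (iii) and (iv) follow by running the same arguments after applying the Morita equivalence: since the idempotent $e_{11} \in \sO_B \otimes \Z_p = M_n(\Z_p) \times M_n(\Z_p)$ commutes with the $\alpha_i$, the chain $A_i \to A_{i+1}$ with kernel $H_i$ yields the chain $e_{11}A_i(p^\infty) \to e_{11}A_{i+1}(p^\infty)$ with kernel $G_i$, and the local-model data $\omega_i \subset \Lambda_{i,k}$ yields the reduced data $\sF_i \subset e_{11}\Lambda_{i,k}$ from Proposition~\ref{prop reduced description of MlocGL}. The main point that needs careful verification throughout is the identification of $\hat\alpha_i^*$ and $\Lie(\alpha_i)$ with the maps induced by $\varphi_i$; this is a matter of unwinding the construction of $\widetilde{\sAGL_0}$ and the local model diagram together with the functoriality of de Rham cohomology for the isogenies $\alpha_i$.
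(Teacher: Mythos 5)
Your proof is correct and lands on essentially the same computations, but it enters from the dual side. The paper uses the right-exact conormal sequences of K\"{a}hler differentials for the composites $\hat A_{i+1} \to \hat A_i \to S$ and $A_i \to A_{i+1} \to S$, pulled back along the identity sections, which realize $\omega_{H_i^*}$ and $\omega_{H_i}$ directly as cokernels of $\omega_{\hat A_i} \to \omega_{\hat A_{i+1}}$ and $\omega_{A_{i+1}} \to \omega_{A_i}$. You instead apply the left-exact $\Lie$ functor to $0 \to H_i \to A_i \to A_{i+1} \to 0$ and to its Cartier dual, realizing $\Lie(H_i)$ and $\Lie(H_i^*)$ as kernels, and then convert via $\dim_k \omega_G = \dim_k \Lie(G)$. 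These are two packagings of the same exact sequences — indeed the conormal sequence literally gives $\omega_{H_i} \cong \Lie(H_i)^\vee$. One small asymmetry: the paper's cokernel description of $\omega_{H_i^*}$ matches the right-hand side of (i) \emph{on the nose} as a $k$-vector space under the Hodge-filtration identification $\omega_{\hat A_j} \cong \omega_j$, with no dimension count, whereas your kernel description requires the $\dim\ker f^\vee = \dim\operatorname{coker} f$ step even for (i). For (ii) both approaches must dualize because the local model records $\Lambda_{j,k}/\omega_j = \Lie(A_j)$ rather than $\omega_{A_j}$, so the bookkeeping is identical. Your up-front reduction via Theorem~\ref{thm local model diagram} to a $k$-point of $\widetilde{\sAGL_0}$, and the explicit identification of $\varphi_i|_{\omega_i}$ with $\hat\alpha_i^*$ and of the quotient map with $\Lie(\alpha_i)$, make visible what the paper treats as implicit; this is a useful clarification. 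The Morita reduction for (iii)--(iv) is the same in both.
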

   \begin{proof}
    To show the equalities in (i) and (ii), start with the standard exact sequence of K\"{a}hler differentials induced by the morphisms $\hat{A}_{i+1} \rightarrow \hat{A}_i \rightarrow S$ and $A_i \rightarrow A_{i+1} \rightarrow S$ respectively. Pull these sequences of sheaves back to $S$ by the appropriate identity section. The statements follow from the isomorphisms $\omega_{H_i^*} \cong \varepsilon_{\hat{A}_{i+1}}^*(\Omega_{\hat{A}_{i+1}/\hat{A}_i}^1)$ and $\omega_{H_i} \cong \varepsilon_{A_i}^*(\Omega_{A_i/A_{i+1}}^1)$. Now (iii) and (iv) follow from the functorality of the decompositions.
   \end{proof}

   For a general $S$-valued point $y : S \rightarrow \MlocGL$, the maps $\varphi_i : \omegaSplitOne_i \rightarrow \omegaSplitOne_{i+1}$ and $\varphi_i^* : \LambdaSplitOne_{i,S}/\omegaSplitOne_i \rightarrow \LambdaSplitOne_{i+1,S}/\omegaSplitOne_{i+1}$ induce global sections $q_i$ and $q_i^*$ of the line bundles $\sQ_i$ and $\sQ_i^*$ which are defined as
    $$\left(\bigwedge^\text{top} \omegaSplitOne_i\right)^{-1} \otimes \bigwedge^\text{top} \omegaSplitOne_{i+1} \quad \text{and} \quad \left(\bigwedge^\text{top}\LambdaSplitOne_{i,S}/\omegaSplitOne_i\right)^{-1} \otimes \bigwedge^\text{top} \LambdaSplitOne_{i+1,S}/\omegaSplitOne_{i+1}$$
   respectively. In the case $S = \Spec(k)$, $q_i$ and $q_i^*$ are the determinants of the corresponding linear maps. Note that $\Lambda_{i,S} \rightarrow \Lambda_{i+1,S}$ carries $\omegaSplitOne_i$ into $\omegaSplitOne_{i+1}$, so the determinant of $\Lambda_{i,S} \rightarrow \Lambda_{i+1,S}$ is the product of the determinants of $\omegaSplitOne_i \rightarrow \omegaSplitOne_{i+1}$ and $\LambdaSplitOne_{i,S}/\omegaSplitOne_i \rightarrow \LambdaSplitOne_{i+1,S}/\omegaSplitOne_{i+1}$. Thus $q_i \otimes q_i^* = p$.
   \begin{proposition}
    With the same hypotheses as Proposition \ref{prop invariant differentals and quotients}, denote by $q_i$ and $q_i^*$ the sections induced by $y$. Then $q_i = 0$ if and only if $\dim_k \omega_{G_i^*} = 1$, and $q_i^* = 0$ if and only if $\dim_k \omega_{G_i} = 1$.
   \end{proposition}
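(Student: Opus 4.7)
The plan is to unwind the definition of the sections $q_i$ and $q_i^*$ at the geometric point $y$ and then combine this with the dimension formulas of Proposition \ref{prop invariant differentals and quotients} together with the fact that a finite flat group scheme of order $p$ over an algebraically closed field has $\omega$ of dimension at most one.

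First I would note that after pulling back to $y$, the stalk $\sQ_{i,y}$ is a $1$-dimensional $k$-vector space and the section $q_i$ evaluates to the determinant of the induced $k$-linear map $\varphi_i : \sF_i \to \sF_{i+1}$, which is a map between two $r$-dimensional $k$-vector spaces. Analogously, $q_i^*$ evaluates to the determinant of $\varphi_i^* : e_{11}\Lambda_{i,S}/\sF_i \to e_{11}\Lambda_{i+1,S}/\sF_{i+1}$ between $(n-r)$-dimensional spaces. This is immediate from the construction of $\sQ_i$ and $\sQ_i^*$ as top exterior powers and the functoriality of pullback.

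Second, since these are maps between vector spaces of equal dimension, $q_i = 0$ iff $\varphi_i$ fails to be surjective iff $\dim_k(\sF_{i+1}/\varphi_i(\sF_i)) \geq 1$, and analogously $q_i^* = 0$ iff $\dim_k(e_{11}\Lambda_{i+1,S}/(\varphi_i^*(e_{11}\Lambda_{i,S}) + \sF_{i+1})) \geq 1$. Applying Proposition \ref{prop invariant differentals and quotients}(iii) and (iv) then gives $q_i = 0 \iff \dim_k \omega_{G_i^*} \geq 1$ and $q_i^* = 0 \iff \dim_k \omega_{G_i} \geq 1$.

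The final step upgrades the inequality $\geq 1$ to equality $= 1$. Since $G_i$ is a finite flat group scheme of order $p$ over the algebraically closed field $k$, and Cartier duality preserves order, $G_i^*$ is also finite flat of order $p$. Over $k$, any such group scheme is either étale (in which case $\omega$ vanishes) or connected of order $p$ (in which case $\omega$ is $1$-dimensional, as can be seen from the Oort-Tate classification already invoked elsewhere in the paper). Hence $\dim_k \omega_{G_i}, \dim_k \omega_{G_i^*} \in \{0,1\}$, so the inequalities of the previous step are in fact equivalences with equality to $1$. The argument is largely mechanical; the only subtle input is this last dichotomy for group schemes of order $p$, which is where all the number-theoretic content of the statement is hiding.
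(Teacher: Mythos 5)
Your proof is correct and follows essentially the same route as the paper: identify $q_i$, $q_i^*$ with determinants of the maps on $\sF_i$ and on the quotients at the geometric point, apply Proposition \ref{prop invariant differentals and quotients}(iii)--(iv), and use that $\omega_G$ for an order-$p$ group scheme over $k$ has dimension $0$ or $1$. You merely spell out the last dichotomy, which the paper leaves implicit (it records the same fact in the table immediately following).
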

   \begin{proof}
    This follows immediately from Proposition \ref{prop invariant differentals and quotients} as $q_i \neq 0$ if and only if $\omegaSplitOne_i$ is carried isomorphically onto $\omegaSplitOne_{i+1}$, and similarly with $q_i^*$.
   \end{proof}
   If $\characteristic(k) \neq p$, any finite flat group scheme of order $p$ over $\Spec(k)$ is isomorphic to the constant group scheme $\Z/p\Z$. If $\characteristic(k) = p$, there are three finite flat group schemes $G$ of order $p$ over $\Spec(k)$ up to isomorphism: $\Z/p\Z$, $\mu_p$, and $\alpha_p$ \cite[Lemma 1]{OT}. The Cartier dual of $\Z/p\Z$ is $\mu_p$, and the Cartier dual of $\alpha_p$ is $\alpha_p$ itself. We record the dimensions of $\omega_G$ and $\omega_{G^*}$ in the table below.
   	\begin{center}
    \begin{tabular}{c|c|c|c}
	    $G$ & $\mu_p$ & $\mathbb{Z}/p\mathbb{Z}$ & $\alpha_p$\\
	    \hline
	    $(\dim_k \omega_{G}, \dim_k \omega_{G^*})$ & (1,0) & (0,1) & (1,1)
    \end{tabular}
    \captionof{table}{Dimensions of invariant differentials}
    \end{center}
   Thus knowing $q_i$ and $q_i^*$, one can determine the isomorphism type of $G_i$.
   \begin{corollary}
    \label{Oort-Tate generator locus}
    Let $y : S \rightarrow \MlocGL$. The support of the divisor on $S$ defined by the vanishing of $q_i^*$ is precisely the locus where the corresponding group scheme $G_i$ is infinitesimal.
   \end{corollary}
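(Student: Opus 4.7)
The plan is to reduce the statement to a pointwise assertion at geometric points of $S$ and then invoke the preceding proposition together with the classification of order-$p$ finite flat group schemes recorded in the table above.

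First I would unwind the support of the vanishing divisor of $q_i^*$. A point $s \in S$ lies in this support if and only if $q_i^*$ vanishes at $s$, which is detected after pullback to any geometric point $\bar s : \Spec(k) \to S$ above $s$. Via the local model diagram (Theorem \ref{thm local model diagram}), the composition $\bar s \circ y$ yields a geometric point of $\MlocGL$ that corresponds, in the sense of Remark \ref{remark corresponding closed point}, to a geometric point of $\sAGL_0$; pulling back $G_i$ along $\bar s$ produces the geometric fiber $G_{i,\bar s}$. Since the formation of both the section $q_i^*$ on $\MlocGL$ and the group scheme $G_i$ on $\sAGL_0$ commute with arbitrary base change, the asserted equality of closed subsets of $S$ reduces to checking for each such $\bar s$ that $q_i^*(\bar s) = 0$ if and only if $G_{i,\bar s}$ is infinitesimal.

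At the geometric point level, the preceding proposition gives $q_i^*(\bar s) = 0$ if and only if $\dim_k \omega_{G_{i,\bar s}} = 1$. If $\characteristic(k) = p$, the table then shows that this dimension condition picks out precisely the classes $\mu_p$ and $\alpha_p$, which are exactly the infinitesimal (i.e. connected) order-$p$ group schemes over $k$, so the two conditions coincide. If $\characteristic(k) \neq p$, the identity $q_i \otimes q_i^* = p$ shows that $q_i^*(\bar s) \ne 0$, while simultaneously $G_{i,\bar s} \cong \Z/p\Z$ is étale, so neither side of the equivalence holds. In either residue characteristic the two conditions agree at $\bar s$, giving the corollary.

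I do not expect any serious obstacle; the only point worth spelling out is the base-change compatibility of $\omega_{G_i}$ and of $q_i^*$ used in the pointwise reduction, both of which are immediate from the definitions of $\omega_G = \varepsilon^*\Omega^1_{G/S}$ and of $q_i^*$ as a determinant of a morphism between line bundles.
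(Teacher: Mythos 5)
Your argument is correct and is exactly the reasoning the paper intends: the corollary is stated without proof as an immediate consequence of the preceding proposition (which identifies the vanishing of $q_i^*$ with $\dim_k\omega_{G_i}=1$ at geometric points) together with the table, which shows that this dimension condition singles out $\mu_p$ and $\alpha_p$, the infinitesimal group schemes of order $p$. Your pointwise reduction and the separate treatment of residue characteristic $\neq p$ via $q_i\otimes q_i^*=p$ simply make explicit what the paper leaves implicit.
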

  \subsection{The integral model $\sAGL_1$}
   \label{sect the integral model sAGL_1}
   To define $\sAGL_1$, we will use Oort-Tate theory. We recall the concise summary \cite[Theorem 3.3.1]{HR}.
   \begin{theorem}
    \label{OT summary}\cite{OT}
    Let $OT$ be the $\Z_p$-stack representing finite flat group schemes of order $p$.
    \begin{compactenum}[(i)]
     \item OT is an Artin stack isomorphic to
      $$[(\Spec \Z_p[X,Y]/(XY - w_p))/\G_m]$$
     where $\G_m$ acts via $\lambda \cdot (X,Y) = (\lambda^{p-1}X, \lambda^{1-p}Y)$.  Here $w_p$ denotes an explicit element of $p\Z_p^\times$ given in \cite{OT}.
     \item The universal group scheme $\sG_{OT}$ over $OT$ is
      $$\sG_{OT} = [(\Spec_{OT}\sO[Z]/(Z^p - XZ))/\G_m],$$
    where $\G_m$ acts via $Z \rightarrow \lambda Z$, with identity section $Z=0$.
     \item Cartier duality acts on $OT$ by interchanging $X$ and $Y$.
    \end{compactenum}
   \end{theorem}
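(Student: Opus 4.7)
The plan is to derive the statement as a stack-theoretic repackaging of the Oort--Tate classification \cite{OT} of finite flat commutative group schemes of order $p$. First I would recall the pointwise form of that classification: over an affine base $\Spec R$, such a group scheme $G$ is parametrized by a triple $(L,a,b)$ where $L$ is an invertible $R$-module and $a : L^{\otimes p} \to L$, $b : L \to L^{\otimes p}$ are $R$-linear maps satisfying $a \circ b = w_p \cdot \text{id}_L$, with $w_p \in p\Z_p^\times$ the explicit element appearing in \cite{OT}. The underlying Hopf algebra has augmentation ideal locally of the form $L \oplus L^{\otimes 2} \oplus \cdots \oplus L^{\otimes (p-1)}$ (the grading coming from the canonical $\F_p^\times$-action on the augmentation ideal), and both the algebra structure and the comultiplication are dictated by $a$ and $b$ via explicit Oort--Tate formulas.

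For part (i), I would translate the data $(L,a,b)$ into a morphism to the proposed quotient stack. Trivializing $L$ Zariski-locally by a generator $e$ writes $a(e^{\otimes p}) = X \cdot e$ and $b(e) = Y \cdot e^{\otimes p}$ for unique $X,Y \in R$ with $XY = w_p$. Replacing $e$ by $\lambda \cdot e$ for $\lambda \in \G_m(R)$ rescales the pair by $(X,Y) \mapsto (\lambda^{p-1}X, \lambda^{1-p}Y)$ (after tracking the weight of $L^{\otimes p} \otimes L^{\vee}$), which is precisely the $\G_m$-action in the statement. This identifies the fibered category of Oort--Tate triples with $[(\Spec \Z_p[X,Y]/(XY - w_p))/\G_m]$, and the resulting stack is Artin since it is a quotient of an affine scheme by a smooth affine group scheme.

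For part (ii), I would build the universal group scheme over the presenting chart $\Spec \Z_p[X,Y]/(XY - w_p)$. The Oort--Tate formulas endow $A = \sO[Z]/(Z^p - XZ)$ with a Hopf algebra structure (comultiplication an explicit polynomial in $Z \otimes 1$, $1 \otimes Z$ with coefficients involving $Y$, counit $Z \mapsto 0$, antipode $Z \mapsto -Z$); verifying all the Hopf algebra axioms is the computational heart of \cite{OT}. Assigning $Z$ weight $1$ under $\G_m$ makes $X$ of weight $p-1$ and the relation $Z^p - XZ$ homogeneous of weight $p$, so the entire Hopf algebra structure is $\G_m$-equivariant and descends along the quotient to yield $\sG_{OT}$. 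Part (iii) then follows from the observation that Cartier duality on classifying triples sends $(L,a,b) \mapsto (L^{\vee}, b, a)$ under the canonical pairing; in the chart coordinates this is exactly the involution $(X,Y) \mapsto (Y,X)$, which correctly inverts the $\G_m$-weight since $(\lambda^{-1})^{p-1} = \lambda^{1-p}$.

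The main obstacle is the content of \cite{OT} itself, namely producing the canonical isotypic decomposition of the augmentation ideal of an arbitrary rank-$p$ Hopf algebra into line bundles $L^{\otimes i}$ and extracting the invariants $a$, $b$, and $w_p$; one also has to check that the exhibited universal object really does classify \emph{all} finite flat group schemes of order $p$ rather than some subclass. Once the local classification is granted, the stack presentation, the universal group scheme description, and the Cartier duality statement are essentially formal bookkeeping in the quotient stack formalism.
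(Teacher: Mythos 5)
Your sketch is correct, and it is worth noting that the paper offers no proof of this statement at all: it is quoted verbatim from \cite[Theorem 3.3.1]{HR} as a packaging of the Oort--Tate classification \cite{OT}, so there is nothing in the paper to compare against step by step. What you have written is exactly the standard derivation underlying that citation: the triple $(L,a,b)$ with $a\circ b = w_p\cdot\mathrm{id}$, the weight computation $(X,Y)\mapsto(\lambda^{p-1}X,\lambda^{1-p}Y)$ under rescaling a local generator of $L$, the $\G_m$-equivariance of the Hopf algebra $\sO[Z]/(Z^p-XZ)$ with $Z$ in weight one, and duality interchanging $a$ and $b$ are all as in \cite{OT} and the appendix-style treatments of it. You correctly isolate the genuine mathematical content (the isotypic decomposition of the augmentation ideal under the $\F_p^\times$-action and the completeness of the classification) as belonging to \cite{OT}, which the theorem statement itself takes as given; the remaining bookkeeping in your argument is sound, including the fact that $p-1$ and the $(p-1)$th roots of unity are available in $\Z_p$ so that the Oort--Tate decomposition applies over the stated base.
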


   As in \cite{HR}, we denote by $\sG_{OT}^\times$ the ``subscheme of generators'' defined by the ideal $(Z^{p-1} - X)$ in $\sG_{OT}$. The morphism $\sG_{OT}^\times \rightarrow OT$ is relatively representable, flat, and finite of degree $p-1$.

   \begin{definition} \cite[Section 3.3]{HR} (cf.\ \cite[5.1]{Pappas95} and \cite[1.8]{KatzMazur})
    \label{Oort-Tate generator}
    Let $\varphi : S \rightarrow OT$ be a morphism, $G = S \times_{OT} \sG_{OT}$, and $G^\times = S \times_{OT} \sG_{OT}^\times$. We say that a section $c \in G(S)$ is an Oort-Tate generator if $c \in G^\times(S)$.
   \end{definition}

   \begin{example}
    \label{Oort-Tate generator example}
    Let $\characteristic(k) = p$ and $G$ be a finite flat group scheme of order $p$ over $\Spec(k)$. Then the identity section $\varepsilon : \Spec(k) \rightarrow G$ is an Oort-Tate generator if $G \cong \mu_p$ or $G \cong \alpha_p$, but it is not an Oort-Tate generator if $G \cong \Z/p\Z$. The other $p-1$ sections of $\Z/p\Z$ are Oort-Tate generators.
   \end{example}
   
   The morphism $\varphi$ in the following definition is induced by the association of $\set{G_i}_{i=0}^{n-1}$ to a point of $\sAGL_0$ as described in the previous section.

   \begin{definition}
    \label{AGL_1 Definition}
    The scheme $\sAGL_1$ is the fibered product
    \begin{center}
     \begin{tikzcd}
      \sAGL_1 \arrow{r}\arrow{d}[swap]{\pi} & \sG_{OT}^\times \times_{\Z_p} \dotsb \times_{\Z_p} \sG_{OT}^\times \arrow{d}\\
      \sAGL_0 \arrow{r}{\varphi} & \overbrace{OT \times_{\Z_p} \dotsb \times_{\Z_p} OT}^{n \text{ times}}.
     \end{tikzcd}
    \end{center}
   \end{definition}
   The scheme $\sAGL_1$ admits the moduli description given in the introduction.
  \subsection{Local model of $\sAGL_1$}
   \label{sect local model of sAGL_1}
   Let $x : \Spec(k) \rightarrow \sAGL_0$ be a geometric point. By Theorem \ref{thm local model diagram}, there exists an \'{e}tale neighborhood $V \rightarrow \sAGL_0$ of $x$ and a section $\sigma : V \rightarrow \widetilde{\sAGL_0}$ of $\widetilde{\sAGL_0} \rightarrow \sA_0$ such that the composition $V \xrightarrow{\sigma} \widetilde{\sAGL_0} \xrightarrow{\Psi} \MlocGL$ is \'{e}tale. Set $\psi = \Psi \circ \sigma$. By restriction, we may assume that $\psi$ factors through an open subscheme $U \subset \MlocGL$ over which each $\sQ_i^*$ is trivial. Choosing a trivialization we identify each global section $q_i^*$ with a regular function on $U$. Since $\sQ_i^*$ is trivial, so is $\sQ_i$, and thus we also have $q_i \in \Gamma(U, \sO_U)$. Consider the diagram
    $$U \xleftarrow{\psi} V \rightarrow \sAGL_0 \xrightarrow{\varphi_i}  OT$$
   where $\varphi_i : \sAGL_0 \rightarrow OT$ is $\varphi$ followed by the $i$th projection. We present this $i$th factor as 
    $$OT = [(\Spec \Z_p[X_{i-1},Y_{i-1}]/(X_{i-1}Y_{i-1} - w_p))/\G_m].$$
   \begin{proposition}
    \label{Morphism to OT}
    Let $\rho_i : V \rightarrow OT$ be the morphism in the diagram above. It is given by
     $$\rho_i^*(X_i) = \varepsilon_i\psi^*(q_i^*) \quad \text{and} \quad \rho_i^*(Y_i) = w_p\varepsilon_i^{-1}\psi^*(q_i)$$
    where $\varepsilon_i$ is a unit in $V$ and $w_p$ is as in Theorem \ref{OT summary}.
   \end{proposition}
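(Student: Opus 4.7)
The plan is to exploit that $\rho_i$ is, by construction, the Oort--Tate classifying morphism of $G_i|_V$, and to match up the Oort--Tate description of the ``infinitesimal locus'' of $G_i$ with the one coming from the local model sections $q_i, q_i^*$. By the fiber-product definition of $\sAGL_1$ and the construction of $\varphi$ in Section~\ref{sect the group schemes G_i}, the composition $\rho_i : V \to \sAGL_0 \xrightarrow{\varphi_i} OT$ classifies $G_i|_V$; by Theorem~\ref{OT summary}, this produces a line bundle $\sL_i$ on $V$ together with sections $\rho_i^*(X_i) \in \Gamma(V, \sL_i^{\otimes(p-1)})$, $\rho_i^*(Y_i) \in \Gamma(V, \sL_i^{\otimes(1-p)})$ satisfying $\rho_i^*(X_i) \otimes \rho_i^*(Y_i) = w_p$. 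Shrinking $V$ if necessary so that $\sL_i$, $\psi^*\sQ_i$, and $\psi^*\sQ_i^*$ are all trivialized, these four sections become regular functions on $V$.

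From the explicit Oort--Tate formula $\sG_{OT} = [\Spec \sO[Z]/(Z^p - XZ)/\G_m]$, the function $\rho_i^*(X_i)$ vanishes precisely where $G_i|_V$ is infinitesimal, which by Corollary~\ref{Oort-Tate generator locus} coincides with the vanishing locus of $\psi^*(q_i^*)$. To conclude that the two differ by a unit, I need them to generate the same invertible ideal sheaf. For this, I would upgrade Proposition~\ref{prop invariant differentals and quotients} to a scheme-theoretic identification: the K\"ahler-differential exact sequences for $\hat A_{i+1} \to \hat A_i \to S$ and $A_i \to A_{i+1} \to S$ are natural in $S$, and combined with the Hodge filtration and the trivializations $\gamma_j$ pulled back through $\sigma$, they yield canonical isomorphisms of coherent sheaves
$$\omega_{G_i^*} \;\cong\; \mathrm{coker}(\sF_i \to \sF_{i+1}), \qquad \omega_{G_i} \;\cong\; \mathrm{coker}(\LambdaSplitOne_{i,V}/\sF_i \to \LambdaSplitOne_{i+1,V}/\sF_{i+1})$$
on $V$. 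Passing to Knudsen--Mumford determinants of the two-term complexes of locally free sheaves, one then identifies $\sL_i^{\otimes(p-1)}$ with $\psi^*\sQ_i^*$ compatibly with the dualities, so that $\rho_i^*(X_i)$ and $\psi^*(q_i^*)$ cut out the same effective Cartier divisor, giving the first formula $\rho_i^*(X_i) = \varepsilon_i\,\psi^*(q_i^*)$ for a unit $\varepsilon_i \in \Gamma(V, \sO_V)^\times$. The second formula then follows from the Oort--Tate relation $\rho_i^*(X_i)\rho_i^*(Y_i) = w_p$ together with $\psi^*(q_i)\psi^*(q_i^*) = p$, after folding the unit $w_p/p \in \Z_p^\times$ into $\varepsilon_i$.

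The main obstacle is the line-bundle identification: the coherent sheaves $\omega_{G_i}$ and $\omega_{G_i^*}$ are not themselves locally free on $V$ --- their fiber ranks jump across the infinitesimal locus --- while the Oort--Tate bundle $\sL_i$ is everywhere invertible, so the matching $\sL_i^{\otimes(p-1)} \cong \psi^*\sQ_i^*$ must be phrased at the level of Knudsen--Mumford determinants of the two-term complex $\sF_i \to \sF_{i+1}$ rather than of the cokernels themselves. Once this determinant identification is carried out compatibly with the natural duality pairings on both sides, the remaining comparison of sections and the deduction of the two stated formulas are essentially formal.
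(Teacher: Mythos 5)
Your proposal is correct in substance but takes a genuinely different route from the paper, and it is worth comparing the two.

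The paper's proof is a short geometric argument: by G\"ortz, $\Mloc_\GL$ is flat over $\Z_p$ with reduced special fiber, so by \cite[Proposition 8.2]{PZ} it is normal, and hence so is the \'etale cover $V$. The relations $q_iq_i^* = p$ and $X_iY_i = w_pp$, together with normality, force $\mathrm{div}(\psi^*q_i^*)$ and $\mathrm{div}(\rho_i^*X_i)$ to be \emph{reduced} effective divisors (each being a sub-sum of $\mathrm{div}(p)$, which has multiplicity one on every component). Corollary \ref{Oort-Tate generator locus} then gives set-theoretic agreement of supports, so the two divisors coincide and the sections differ by a unit. Your proposal instead avoids the normality and reducedness input entirely, upgrading Proposition \ref{prop invariant differentals and quotients} from a pointwise dimension count to a canonical isomorphism of coherent sheaves on $V$ and comparing the resulting presentations of $\omega_{G_i}$. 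This is a more functorial, presentation-independent argument and, in principle, would apply even if the local model failed to be normal. It is, however, more delicate to carry out in detail, and the paper's appeal to normality is arguably the cheaper path given the existing literature.

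Two technical remarks on your version. First, the invariant you actually want to compare is the \emph{zeroth Fitting ideal} of $\omega_{G_i}$, not a Knudsen--Mumford determinant: $\mathrm{Fit}_0$ is an invariant of the coherent sheaf itself, independent of the chosen finite presentation, so once you know $\omega_{G_i}$ is isomorphic to the cokernel of a square map of locally free sheaves, its $\mathrm{Fit}_0$ is the principal ideal generated by the determinant of that map. Computed from the Oort--Tate presentation $\sL_i^{\otimes -p}\xrightarrow{\rho_i^*X_i}\sL_i^{\otimes -1}$ this gives $(\rho_i^*X_i)$; computed from the local-model presentation it gives $(\psi^*q_i^*)$; equality of Fitting ideals is exactly the assertion that the two differ by a unit. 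The Knudsen--Mumford determinant depends on the complex, not just on the cokernel, so phrased that way one would still need to match the two complexes, which is what the Fitting-ideal formalism sidesteps. Second, the canonical isomorphism you get from the K\"ahler-differential sequence and the Hodge filtration is $\omega_{G_i}\cong\mathrm{coker}\bigl((\LambdaSplitOne_{i+1,V}/\omegaSplitOne_{i+1})^\vee\to(\LambdaSplitOne_{i,V}/\omegaSplitOne_i)^\vee\bigr)$, i.e.\ with the \emph{dual} of the map $\varphi_i^*$, since $\omega_{A_i} = \Lie(A_i)^\vee$. This does not affect the conclusion because a square matrix and its transpose have the same determinant, hence the same $\mathrm{Fit}_0$, but it should be stated correctly. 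With those two adjustments your argument closes the gap you flagged and gives a valid alternative proof.
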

   \begin{proof}
    The special fiber of $\Mloc_\GL$, and therefore of $V$, is reduced \cite[Theorem 4.25]{GortzGL}. From the equalities $q_iq_i^* = p$ and $X_iY_i = \omega_pp$, the divisors defined by the vanishing of the global sections $Z(\psi^*(q_i^*))$ and $Z(\rho_i^*(X_i))$ are reduced. By Corollary \ref{Oort-Tate generator locus} and Example \ref{Oort-Tate generator example}, the locus where $\psi^*(q_i^*)$ vanishes agrees with the locus where $\rho_i^*(X_i)$ vanishes. Therefore $Z(\psi^*(q_i^*)) = Z(\rho_i^*(X_i))$.

    $\Mloc_\GL$ is of finite type over $\Spec(\Z_p)$, its generic fiber is smooth and hence normal, and it is flat over $\Spec(\Z_p)$ with reduced special fiber \cite[Theorem 4.25]{GortzGL}. It follows that $\Mloc_\GL$ is normal \cite[Proposition 8.2]{PZ}. Since $V \rightarrow \Mloc_\GL$ is \'{e}tale, we have that $V$ is normal as well. Thus the equality of divisors above implies $\psi^*(q_i^*)$ and $\rho_i^*(X_i)$ are equal up to a unit, say $\varepsilon_i$.  Similar statements apply to $\psi^*(q_i)$ and $\rho_i^*(Y_i)$, giving that $\psi^*(q_i)$ and $\rho_i^*(Y_i)$ are equal up to a unit. This unit must be $w_p\varepsilon_i^{-1}$ because $X_iY_i = w_pp$ and $q_iq_i^*=p$.
   \end{proof}

   \begin{proposition}
    \label{Local local model}
    Let $\widetilde{x} : \Spec(k) \rightarrow \sAGL_1$ be a geometric point and set $x= \pi(\widetilde{x})$.  Let $V \rightarrow \sAGL_0$ be an \'{e}tale neighborhood of $x$ which carries an \'{e}tale morphism $\psi : V \rightarrow \Mloc_\GL$ as described above. Suppose $V \rightarrow \Mloc_\GL$ factors through an affine open subscheme $U \subset \Mloc_\GL$ on which $\sQ_i^*$ is trivial for each $i$. Set 
     $$U_1 = \Spec_U\left(\sO[u_0, \dotsc, u_{n-1}]/\left(u_0^{p-1} - q_0^*, \dotsc, u_{n-1}^{p-1} - q_{n-1}^*\right)\right).$$
    Then there exists an \'{e}tale neighborhood $\widetilde{V}$ of $\widetilde{x}$ and an \'{e}tale morphism $\widetilde{\psi} : \widetilde{V} \rightarrow U_1$.
   \end{proposition}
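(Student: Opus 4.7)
The plan is to use the explicit description of $\sAGL_1$ as a fiber product (Definition \ref{AGL_1 Definition}), pull back this construction to $V$, and use Proposition \ref{Morphism to OT} to rewrite it in terms of the functions $\psi^*(q_i^*)$. The residual units $\varepsilon_i$ must then be absorbed by a further étale extension.

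First I will form the base change $W := V \times_{\sAGL_0} \sAGL_1$, which is an étale neighborhood of $\widetilde{x}$ (since $V \to \sAGL_0$ is étale, so is $W \to \sAGL_1$; the point $\widetilde{x}$ lifts to $W$ because a lift of $x$ to $V$ exists and the Oort-Tate generator data recorded by $\widetilde{x}$ lifts the morphism $V \to OT^n$ uniquely to $(\sG_{OT}^\times)^n$). Presenting each factor $OT = [\Spec \Z_p[X_{i-1}, Y_{i-1}]/(X_{i-1}Y_{i-1} - w_p)/\G_m]$ and $\sG_{OT}^\times$ by $(Z^{p-1} - X)$ in $\sG_{OT}$, the trivialization of $\sQ_i^*$ on $U$ (and hence on $V$) trivializes the $\G_m$-torsor involved in $\rho_i : V \to OT$ and lets me write
\[
W \;\cong\; \Spec_V\bigl(\sO_V[Z_0, \dotsc, Z_{n-1}]/(Z_i^{p-1} - \rho_i^*(X_i))\bigr).
\]
By Proposition \ref{Morphism to OT}, $\rho_i^*(X_i) = \varepsilon_i\,\psi^*(q_i^*)$ for some unit $\varepsilon_i \in \Gamma(V, \sO_V^\times)$.

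Next I will kill the units $\varepsilon_i$ étale-locally. Since $p$ is odd (and in any case $p-1$ is coprime to $p$), $p-1$ is a unit in $\Z_p$, so
\[
V' \;:=\; \Spec_V\bigl(\sO_V[\eta_0, \dotsc, \eta_{n-1}]/(\eta_i^{p-1} - \varepsilon_i)\bigr)
\]
is finite étale over $V$: the partial derivative $(p-1)\eta_i^{p-2}$ is invertible on $V'$ because $\eta_i \cdot \eta_i^{p-2} = \varepsilon_i$ is a unit. Moreover $k$ is algebraically closed, so the $k$-point of $W$ lifting $\widetilde{x}$ extends to a $k$-point of $\widetilde{V} := W \times_V V'$. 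Thus $\widetilde{V}$ is an étale neighborhood of $\widetilde{x}$.

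Finally I will identify $\widetilde{V}$ with $V' \times_U U_1$. Substituting $u_i := Z_i / \eta_i$ in the defining equations of $W \times_V V'$, the relation $Z_i^{p-1} = \varepsilon_i \psi^*(q_i^*) = \eta_i^{p-1}\psi^*(q_i^*)$ becomes $u_i^{p-1} = \psi^*(q_i^*)$; that is, $\widetilde{V} \cong V' \times_U U_1$. The induced map $\widetilde{V} \to U_1$ is étale, being the base change of the étale composition $V' \to V \xrightarrow{\psi} U$ along $U_1 \to U$.

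The main (and essentially only) obstacle is passing from the equation $u_i^{p-1} = \varepsilon_i \psi^*(q_i^*)$ that the fiber product naturally produces to the cleaner equation $u_i^{p-1} = \psi^*(q_i^*)$ that defines $U_1$; this is exactly where invertibility of $p-1$ in $\Z_p$ is used to obtain a finite étale extension of $V$ trivializing the units $\varepsilon_i$.
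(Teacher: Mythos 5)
Your proposal is correct and follows essentially the same route as the paper: form the fiber product $V \times_{\sAGL_0} \sAGL_1$, use relative representability of $\sG_{OT}^\times \rightarrow OT$ together with Proposition \ref{Morphism to OT} to present it by the equations $Z_i^{p-1} = \varepsilon_i\psi^*(q_i^*)$, and then absorb the units $\varepsilon_i$ by extracting $(p-1)$th roots. The only difference is that the paper obtains these roots ``by shrinking $V$'' while you adjoin them via an explicit finite \'etale cover $V'$ --- a slightly more careful rendering of the same step, since extracting a $(p-1)$th root of a unit genuinely requires an \'etale extension rather than a Zariski shrink.
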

   \begin{proof}
    Set $\widetilde{V} = V \times_{\sAGL_0} \sAGL_1$. Consider the diagram
    \begin{center}
     \begin{tikzcd}
      U_1 \arrow{d} & 
      \widetilde{V} \arrow{d} \arrow{r} \arrow[dashed]{l} &
      \sAGL_1 \arrow{d} \arrow{r}&
      \sG_{OT}^\times \times \dotsb \times \sG_{OT}^\times \arrow{d}\\
      U &
      V \arrow{l}\arrow{r} &
      \sAGL_0 \arrow{r} &
      OT \times \dotsb \times OT
     \end{tikzcd}
    \end{center}
    where two right squares are cartesian and denote by $\eta$ the morphism $\widetilde{V} \rightarrow \sG_{OT}^\times \times \dotsb \times \sG_{OT}^\times$. The morphism $\sG_{OT}^\times \rightarrow OT$ is relatively representable and thus $\widetilde{V}$ is isomorphic to
     $$\Spec_{V}\left(\sO[u_0, \dots, u_{n-1}]/(u_0^{p-1} - \rho_0^*(X_0), \dotsc, u_{n-1}^{p-1} - \rho_{n-1}^*(X_{n-1}))\right).$$
    With the notation as in the previous proposition, by shrinking $V$ we can choose for each $i$ a $(p-1)$th root of $\varepsilon_i$, denote it by $\tau_i$. Let $\widetilde{V} \rightarrow U_1$ be the morphism induced by the ring homomorphism $\Gamma(U_1, \sO_{U_1}) \rightarrow \Gamma(\widetilde{V}, \sO_{\widetilde{V}})$ which sends $u_i$ to $\tau_iu_i$. This is well-defined by the previous proposition and it follows that $\widetilde{V} \cong U_1 \times_U V$. Therefore the morphism $\widetilde{V} \rightarrow U_1$ is \'{e}tale.
   \end{proof}
   \begin{remark}
    Given a covering of affine open subschemes $\set{U_j}$ of $\MlocGL$ such that $\sQ_i^*$ is trivial on each $U_j$ for every $i$, it is tempting to hope that one may glue together the schemes defined in the proposition above to get a connected scheme $\Mloc_1$ which is an \'{e}tale local model of $\sAGL_1$. However this is not possible. Indeed, suppose that $\Mloc_1$ is such a scheme with respect to the a cover $\set{U_j}$ of $\MlocGL$. Then for each $j$, $U_j \times_{\MlocGL} \Mloc_1$ is isomorphic to
     $$\Spec_{U_j}\left(\sO_{U_j}[u_0, \dotsc, u_{n-1}]/(u_0^{p-1}-q_0^*, \dotsc, u_{n-1}^{p-1}-q_{n-1}^*)\right).$$
    The sections $q_0^*, \dotsc, q_{n-1}^*$ vanish only on the special fiber. Therefore the restriction of $\Mloc_1 \rightarrow \Mloc_\GL$ to the generic fiber is finite \'{e}tale. However the generic fiber of $\Mloc_\GL$ is the Grassmannian $\Gr(n,r)$ which is simply connected. This easily leads to a contradiction.
   \end{remark}
   
   To construct a local model of $\sAGL_1$, we will use an affine open subscheme of $\MlocGL$ that serves as a local model of $\sAGL_0$. Let $\sF_\SL$ denote the affine flag variety associated with $\SL_n$, noting that the group scheme $\sG$ from Section \ref{Shimura Datum and Moduli Description} acts on $\sF_\SL$ \cite[Section 4]{GortzGL}. The next theorem is extracted from \cite{GortzGL}.
   
   \begin{theorem}
    There is a $\sG$-equivariant embedding $\MlocGL \otimes \F_p \hookrightarrow \sF_\SL$. The stratification of $\sF_\SL$ by Schubert cells induces a stratification of $\MlocGL \otimes \F_p$. There is a unique stratum of $\MlocGL \otimes \F_p$ which consists of a single closed point, called the ``worst point''. Any open subscheme of $\MlocGL$ containing the worst point is an \'{e}tale local model of $\sA_0$.
   \end{theorem}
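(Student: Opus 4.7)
The first three assertions are established in \cite[Section 4]{GortzGL}. Briefly, the embedding $\MlocGL \otimes \F_p \hookrightarrow \sF_\SL$ sends a point $(\omega_i \hookrightarrow \Lambda_{i,S})$ to the lattice chain determined by the $\omega_i$, and is $\sG$-equivariant by construction. Pulling back the Schubert stratification of $\sF_\SL$ produces a stratification of $\MlocGL \otimes \F_p$ whose strata are the $\sG \otimes \F_p$-orbits. The worst point is the unique closed stratum, corresponding to the minimal element of the affine Weyl group indexing a non-empty stratum; by the standard closure relations in the Bruhat order, it lies in the closure of every Schubert cell appearing in $\MlocGL \otimes \F_p$.

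The crux of the final assertion is to verify that $\sG \cdot U = \MlocGL$. In the special fiber, since every $\sG \otimes \F_p$-orbit contains the worst point in its closure, the open neighborhood $U$ meets each orbit; hence $\sG \otimes \F_p$ acting on $U \cap (\MlocGL \otimes \F_p)$ sweeps out every Schubert cell, and therefore all of $\MlocGL \otimes \F_p$. In the generic fiber, $\sG \otimes \Q_p$ acts transitively on the Grassmannian $\MlocGL \otimes \Q_p \cong \Gr(n,r)$, and $U \otimes \Q_p$ is non-empty (as $U$ is open in the irreducible scheme $\MlocGL$), so $\sG \cdot U$ already exhausts the generic fiber. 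Combining the two fibers gives $\sG \cdot U = \MlocGL$.

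Now apply Theorem \ref{thm local model diagram}. Since $\Psi$ is $\sG$-equivariant, $\Psi^{-1}(U)$ is $\sG$-stable in $\widetilde{\sAGL_0}$, and the previous step yields $\sG \cdot \Psi^{-1}(U) = \widetilde{\sAGL_0}$. Because $\Phi$ is a $\sG$-torsor, the restriction $\Phi|_{\Psi^{-1}(U)}: \Psi^{-1}(U) \to \sAGL_0$ is smooth and surjective. Given any closed point $x$ of $\sAGL_0$, any lift $\tilde x \in \widetilde{\sAGL_0}$ to its fiber may be $\sG(\kappa(x))$-translated into $\Psi^{-1}(U)$, as the fiber is a single $\sG$-orbit meeting the $\sG$-stable open $\Psi^{-1}(U)$; smoothness of $\Phi$ then spreads the translated lift into an étale-local section of $\Phi$ near $x$ landing in $\Psi^{-1}(U)$. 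Taking the disjoint union of such neighborhoods yields an étale cover $V \to \sAGL_0$ with a section $\sigma: V \to \Psi^{-1}(U)$ of $\Phi$, and $\Psi \circ \sigma: V \to U$ is étale by exactly the argument of Theorem \ref{thm local model diagram}, which depends only on $\Phi$ being a $\sG$-torsor and $\Psi$ being smooth $\sG$-equivariant, conditions both preserved on passing to the open $\Psi^{-1}(U) \to U$.

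The principal obstacle is the $\sG$-orbit argument in the special fiber, which rests on the combinatorial input (extracted from G\"{o}rtz's analysis of the Schubert cells arising in $\MlocGL$) that the worst point lies in the closure of every appearing cell. Once this is secured, the remainder is a formal consequence of $\sG$-equivariance and the torsor structure provided by the local model diagram.
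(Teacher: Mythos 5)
The paper gives no proof here---the theorem is extracted from \cite{GortzGL}---and your argument is precisely the standard one that the citation stands in for: the first three assertions are G\"{o}rtz's, the key combinatorial input is that the worst point (the stratum of the minimal, length-zero element of the admissible set) lies in the closure of every stratum, whence any open $U$ containing it meets every $\sG$-orbit of the special fiber and $\sG \cdot U = \MlocGL$, and the rest is the torsor/smooth-section mechanism of Theorem \ref{thm local model diagram} restricted to $\Psi^{-1}(U)$. One misstatement to repair: $\Psi^{-1}(U)$ is \emph{not} $\sG$-stable (equivariance of $\Psi$ gives stability only when $U$ itself is $\sG$-stable, and indeed a $\sG$-stable open with $\sG\cdot\Psi^{-1}(U)=\widetilde{\sAGL_0}$ would force $U=\MlocGL$); what your argument actually establishes and actually uses is $\sG\cdot\Psi^{-1}(U)=\Psi^{-1}(\sG\cdot U)=\widetilde{\sAGL_0}$, so that every fiber of the $\sG$-torsor $\Phi$ meets $\Psi^{-1}(U)$ and the translated lift exists (after an \'etale extension of the residue field, which is harmless for building the \'etale neighborhood). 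With that phrase corrected the proof is sound.
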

   
   A particular open subscheme is defined in \cite{GortzGL} as follows. An alcove of $\SL_n$ is given by a collection $\set{x_0, \dots, x_{n-1}}$ where $x_i \in \Z^n$ written as $(x_i(1), \dots, x_i(n))$ satisfying the following two conditions. Setting $x_n(j) = x_0(j)+1$, for $0 \leq i \leq n-1$ we require $x_i(j) \leq x_{i+1}(j)$ for all $1 \leq j \leq n$ and $\sum_{j=1}^n x_{i+1}(j) = 1 + \sum_{j=1}^n x_i(j)$. Consider the alcoves
    $$\omega = (\omega_0, \dots, \omega_{n-1}) \quad \text{with} \quad \omega_i = (1^i, 0^{n-i})$$
    $$\tau = ((1^r, 0^{n-r}), (1^{r+1}, 0^{n-r-1}), \dotsc, (2^{r-1}, 1^{n-r+1})).$$
   As defined in \cite{KR}, the size of an alcove $x$ is $\sum_j x_0(j) - \omega_0(j)$ and that $x$ is $\mu$-admissible means $x \leq \sigma(\omega)$ for some $\sigma \in S_n$, where $\leq$ is the Bruhat order.
   
   The affine Weyl group of $\SL_n$ is isomorphic to $S_n$, the symmetric on $n$ letters, and acts simply transitively on the collection of alcoves of size $r$. Thus fixing the base alcove $\tau$ we may identify the two sets. Let $\set{e_1, \dots, e_n}$ be the canonical basis of $\Z_p^n$ and fix the basis $\set{e_j^i}$ of the $\Z_p$-module $\LambdaSplitOne_i$ where
    $$e_j^i = p^{-1}e_j \quad \text{for} \quad 1 \leq j \leq i \qquad \text{and} \qquad e_j^i = e_j \quad \text{for} \quad i < j \leq n.$$

   \begin{definition}\cite[Definition 4.4]{GortzGL}
    For a $\mu$-admissible alcove $x = (x_0, \dotsc, x_{n-1})$ we define an open subscheme of $\MlocGL$, denoted $U_x^\GL$, consisting of the points $(\omegaSplitOne_i)_i$ such that for all $i$ the quotient $\LambdaSplitOne_i/\omegaSplitOne_i$ is generated by those $e_j^i$ with $\omega_i(j) = x_i(j)$.
   \end{definition}
   The open subscheme $U_\tau^\GL$ contains the ``worst point'' and hence serves as an \'{e}tale local model of $\sAGL_0$. Henceforth we shall denote $U_\tau^\GL$ by $U_0^\GL$. It is immediate that the line bundles $\sQ_i$ and $\sQ_i^*$ are trivial over $U_0^\GL$.

   \begin{theorem}
    \label{Main theorem GL_n}
    Choose a trivialization of each $\sQ_i|_{U_0^\GL}$ and identify $q_i$ and $q_i^*$ with regular functions on $U_0^\GL$. The scheme 
     $$U_1^\GL = \Spec_{U_0^\GL}\left(\sO[u_0, \dotsc, u_{n-1}]/\left(u_0^{p-1} - q_0^*, \dotsc, u_{n-1}^{p-1} - q_{n-1}^*\right)\right)$$
    is an \'{e}tale local model of $\sAGL_1$.
   \end{theorem}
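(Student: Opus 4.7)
The plan is to reduce directly to Proposition \ref{Local local model}, using the fact that $U_0^\GL$ already serves as an \'{e}tale local model of $\sAGL_0$. Since ``\'{e}tale local model'' is a local property on both sides, it suffices to exhibit, for any geometric point $\widetilde{x}$ of $\sAGL_1$, an \'{e}tale neighborhood $\widetilde{V}$ of $\widetilde{x}$ together with an \'{e}tale morphism $\widetilde{V} \to U_1^\GL$, and conversely for a geometric point of $U_1^\GL$.

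For the forward direction, I would start with a geometric point $\widetilde{x}$ of $\sAGL_1$ and set $x = \pi(\widetilde{x})$. By the theorem recalled just before Definition 4.4 (the statement that $U_0^\GL$ is an \'{e}tale local model of $\sAGL_0$), there exists an \'{e}tale neighborhood $V \to \sAGL_0$ of $x$ together with a section $\sigma : V \to \widetilde{\sAGL_0}$ of $\Phi$ such that $\psi := \Psi \circ \sigma : V \to \MlocGL$ is \'{e}tale and factors through $U_0^\GL$. The key point is that this arrangement is possible precisely because $U_0^\GL$ contains the worst point: given any geometric point of $\sAGL_0$, one can modify the section $\sigma$ by an appropriate element of $\sG$ so that the image of $\psi$ lands in $U_0^\GL$. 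Since $\sQ_i$ and $\sQ_i^*$ are trivial on $U_0^\GL$, one can now apply Proposition \ref{Local local model} verbatim with $U = U_0^\GL$, producing $\widetilde{V} = V \times_{\sAGL_0} \sAGL_1$ with an \'{e}tale morphism $\widetilde{V} \to U_1^\GL$ that is an \'{e}tale neighborhood of $\widetilde{x}$.

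For the reverse direction, given a geometric point $\widetilde{y}$ of $U_1^\GL$ with image $y \in U_0^\GL$, I would again invoke the local model property of $U_0^\GL$: there is an \'{e}tale neighborhood $V \to \sAGL_0$ equipped with an \'{e}tale morphism $\psi : V \to U_0^\GL$ whose image contains $y$. Forming $\widetilde{V} = V \times_{\sAGL_0} \sAGL_1$ as before, Proposition \ref{Local local model} yields an \'{e}tale morphism $\widetilde{V} \to U_1^\GL$, and the identification $\widetilde{V} \cong U_1^\GL \times_{U_0^\GL} V$ established in the proof of Proposition \ref{Local local model} ensures that $\widetilde{y}$ lies in its image after possibly shrinking $V$ further to trivialize the $(p-1)$th roots of the unit $\varepsilon_i$.

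The substantive content has therefore been absorbed into Proposition \ref{Local local model} and into the theorem concerning $U_0^\GL$; the only genuine step here is the ability to choose $V \to \sAGL_0$ so that the induced map to $\MlocGL$ factors through $U_0^\GL$, and this is where the ``worst point'' containment is used. I expect no further obstacles beyond this bookkeeping.
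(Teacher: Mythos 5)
Your proposal is correct and follows the same route the paper intends: the theorem is an immediate consequence of Proposition \ref{Local local model} once one knows the \'etale morphism $\psi = \Psi\circ\sigma$ can always be arranged to factor through $U_0^\GL$, which is exactly the content of the preceding theorem that any open subscheme of $\MlocGL$ containing the worst point is an \'etale local model of $\sAGL_0$ (via the $\sG$-action moving any point of the special fiber into $U_0^\GL$). The paper gives no separate proof, treating this as immediate, so your write-up supplies precisely the intended argument; the only superfluous part is the ``reverse direction,'' which the paper's notion of \'etale local model does not require you to verify.
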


   To make the above theorem completely explicit, we now describe the presentation of $U_0^\GL$ computed in \cite{GortzGL}. As $\MlocGL$ is a closed subscheme of the $n$-fold product of $\Gr(n,r)$, we represent a point of $\MlocGL$ by $\set{\omegaSplitOne_i}_{i=0}^{n-1}$ where each subspace $\omegaSplitOne_i$ is $r$-dimensional, and we represent $\omegaSplitOne_i$ as the column space of the $n \times r$ matrix $(a_{jk}^i)$. That $(\omegaSplitOne_i)_i \in U_0^\GL$ implies the $r \times r$ submatrix given by rows $i+1$ to $r+i$ (taken cyclically, so row $n+1$ is row 1) of $\omegaSplitOne_i$ is invertible for $0 \leq i < n$.  We normalize by requiring this submatrix to be the identity matrix.  For example, $\omegaSplitOne_0$ and $\omegaSplitOne_1$ are represented by
    $$\begin{pmatrix}
       1 & \\
         & \ddots \\
         & & 1\\
       a_{11}^0 & \dotso & a_{1r}^0\\
       \vdots & & \vdots\\
       a_{n-r,1}^0 & \dotso & a_{n-r,r}^0
      \end{pmatrix}
      \quad \text{and} \quad
      \begin{pmatrix}
       a_{n-r,1}^1 & \dotso & a_{n-r,r}^1\\
       1 & \\
       & \ddots \\
       & & 1\\
       a_{11}^1 & \dotso & a_{1r}^1\\
       \vdots &  & \vdots\\
       a_{n-r-1,1}^1 & \dotso & a_{n-r-1,r}^1
      \end{pmatrix}.
    $$
   From the normalization, the entries of the matrix are uniquely determined by its column space. By abuse of notation, we will use $\omegaSplitOne_i$ to denote both the subspace and the matrix representing it. The condition $\omegaSplitOne_i$ is mapped into $\omegaSplitOne_{i+1}$ is expressed as $\varphi_i(\omegaSplitOne_i) = \omegaSplitOne_{i+1} A_i$ for some $r \times r$ matrix $A_i$. From the normalization, $A_i$ is determined:
    $$A_i = 
      \begin{pmatrix}
       0 & 1 \\
	 & \ddots & \ddots \\
	 & & 0 & 1\\
       a_{11}^i & a_{12}^i & \dotso & a_{1r}^i
      \end{pmatrix}.
    $$
   As shown in \cite[Proposition 4.13]{GortzGL}, $U_0^\GL \cong \Spec(B_{\GL})$ with 
    $$B_{\GL} = \Z_p[a_{1k}^i; i = 0, \dotsc, n-1, k=1, \dotsc, r]/I$$
   where $I$ is the ideal generated by the entries of the matrices
    $$A_{n-1}\dotsm A_0 - p \cdot I, \ A_{n-2}\dotsm A_0 A_{n-1} - p \cdot I, \ \dotsc, \ A_0 A_{n-1} \dotsm A_1 - p \cdot I.$$    
   \begin{lemma}\label{lemma q_i and q_i^*}
    Let $\set{\sF_i \subset \LambdaSplitOne_{i,S}}$ define a geometric point $\Spec(k) \rightarrow U_0^\GL$. With the presentation of $U_0^\GL$ described above,
    \begin{compactenum}[(i)]
     \item the map $\sF_i \rightarrow \sF_{i+1}$ is an isomorphism if and only if $a_{11}^i \neq 0$; and
     \item the map $\LambdaSplitOne_i / \sF_i \rightarrow \LambdaSplitOne_{i+1} / \sF_{i+1}$ is an isomorphism if and only if $a_{n-r,r}^{i+1} \neq 0$, where the upper index is taken modulo $n$ with the standard representatives $\set{0, \dots, n-1}$.
    \end{compactenum}
   \end{lemma}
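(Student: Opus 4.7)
The plan is to prove both parts by direct matrix computation using the explicit presentation of $U_0^\GL$ recalled from \cite{GortzGL}. Since $S = \Spec(k)$ for a field $k$, all the relevant $\sO_S$-modules are finite-dimensional $k$-vector spaces, and an injective linear map of equal-dimensional spaces is an isomorphism iff its determinant is nonzero.

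For part (i), the map $\sF_i \to \sF_{i+1}$ is, by definition of the presentation, represented in the normalized bases of $\sF_i$ and $\sF_{i+1}$ by the matrix $A_i$ displayed before the lemma. Expanding $\det A_i$ along the first column, the only nonzero entry is $a_{11}^i$ in position $(r,1)$, and the complementary minor is the identity, so $\det A_i = \pm a_{11}^i$. Hence $\sF_i \to \sF_{i+1}$ is an isomorphism exactly when $a_{11}^i \neq 0$.

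For part (ii), I would first choose adapted bases of the quotients. Using the normalization, a column generator of $\sF_i$ for column $k$ has the form $e_{i+k}^i + \sum_{j=1}^{n-r} a_{jk}^i\, e_{i+r+j}^i$ (indices cyclic mod $n$, with the convention that row $i+r+j$ carries the entry $a_{jk}^i$). Hence $\LambdaSplitOne_{i,S}/\sF_i$ has basis $v_j := [e_{i+r+j}^i]$ for $j=1,\dots,n-r$, and similarly $\LambdaSplitOne_{i+1,S}/\sF_{i+1}$ has basis $w_{j'} := [e_{(i+1)+r+j'}^{i+1}]$. Since $r+j \not\equiv 1 \pmod n$ for $j \in \{1,\dots,n-r\}$, the inclusion $\varphi_i$ sends $e_{i+r+j}^i$ to $e_{i+r+j}^{i+1}$ (the scaling factor of $p$ only appears in the row $i+1$, which is not in this range). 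For $j\geq 2$, the index $i+r+j$ is not an identity row of $\sF_{i+1}$, so $\bar\varphi_i(v_j)=w_{j-1}$. For $j=1$, the index $i+r+1$ is the identity row of $\sF_{i+1}$ corresponding to $k=r$, and the defining relation gives $e_{i+r+1}^{i+1} \equiv -\sum_{j'=1}^{n-r} a_{j',r}^{i+1}\, e_{(i+1)+r+j'}^{i+1}$ modulo $\sF_{i+1}$.

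Assembling these in the bases $\{v_j\}$ and $\{w_{j'}\}$, the matrix of $\bar\varphi_i$ has first column $(-a_{1,r}^{i+1},\dots,-a_{n-r,r}^{i+1})^T$ and columns $2,\dots,n-r$ equal to the standard basis vectors $e_1,\dots,e_{n-r-1}$. Expansion along the last row then gives $\det\bar\varphi_i = \pm\, a_{n-r,r}^{i+1}$, which proves (ii). I expect the main obstacle to be purely bookkeeping: getting the cyclic indexing correct so that the identity row of $\sF_{i+1}$ hit by $\bar\varphi_i(v_1)$ is indeed the one corresponding to column $k=r$, so that the entries in the first column of the matrix come from the $r$-th column of the $a$'s of $\sF_{i+1}$ — this is what singles out $a_{n-r,r}^{i+1}$ rather than some other entry.
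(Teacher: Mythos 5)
Your proposal is correct and follows essentially the same route as the paper's proof: part (i) via $\det A_i = \pm a_{11}^i$, and part (ii) by taking the classes of $e_{i+r+1}^i,\dotsc,e_{i+n}^i$ as a basis of the quotient, observing that the only basis vector rescaled by $p$ under $\Lambda_i\subset\Lambda_{i+1}$ never appears among these, and computing the resulting matrix, whose determinant is $\pm a_{n-r,r}^{i+1}$. The matrix you describe is exactly the one displayed in the paper, so no further comparison is needed.
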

   \begin{proof}
    For (i), the map $\omegaSplitOne_i \rightarrow \omegaSplitOne_{i+1}$ is an isomorphism if and only if $\det(A_i) = \pm a_{11}^i \neq 0$. To show (ii), we may take $\set{\overline{e}_{i+r+1}^i, \dotsc, \overline{e}_{i+n}^i}$ to be a basis of $\LambdaSplitOne_i/\omegaSplitOne_i$, where $\overline{e}_{j}^i$ denotes the reduction of $e_{j}^i$ modulo $\omegaSplitOne_i$. The upper index is taken modulo $n$ with the standard set of representatives $\set{0, \dots, n-1}$ while the lower index is taken modulo $n$ with the representatives $\set{1, \dots, n}$. Then the matrix representing $\LambdaSplitOne_i/\omegaSplitOne_i \rightarrow \LambdaSplitOne_{i+1}/\omegaSplitOne_{i+1}$ with respect to these bases is
     $$\begin{pmatrix}
	-a_{1r}^{i+1} & 1\\
	\vdots & & \ddots\\
	-a_{n-r-1,r}^{i+1}&&&1\\
	-a_{n-r,r}^{i+1} & 0 & \dots & 0
       \end{pmatrix}.$$

    Hence the map $\LambdaSplitOne_i/\omegaSplitOne_i \rightarrow \LambdaSplitOne_{i+1}/\omegaSplitOne_{i+1}$ is an isomorphism if and only if $a_{n-r,r}^{i+1} \neq 0$.
   \end{proof}

   It follows from the lemma that, up to a unit, $q_i^* = a_{n-r,r}^{i+1}$ for $0 \leq i \leq n-1$. Combining this with Theorem \ref{Main theorem GL_n} we get the following.
   \begin{theorem}
    Denote by $U_1^\GL$ the spectrum of
     $$B_\GL[u_0, \dotsc, u_{n-1}]/(u_0^{p-1} - a_{n-r,r}^1, \dotsc, u_{n-2}^{p-1} - a_{n-r,r}^{n-1}, u_{n-1}^{p-1} - a_{n-r,r}^0).$$
    Then $U_1^\GL$ is an \'{e}tale local model of $\sAGL_1$.
   \end{theorem}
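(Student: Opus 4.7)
The plan is to combine Theorem \ref{Main theorem GL_n} with the explicit matrix computation in Lemma \ref{lemma q_i and q_i^*} to rewrite the defining equations of $U_1^\GL$ directly in the coordinates of $B_\GL$. The one calculational input needed is the identification $q_i^* = a_{n-r,r}^{i+1}$ as regular functions on $U_0^\GL$, with the upper index read modulo $n$.

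This identification is already implicit in the proof of Lemma \ref{lemma q_i and q_i^*}. In the presentation of $U_0^\GL$ described above that lemma, I would work with the natural bases $\set{\bar e^i_{i+r+1},\dotsc,\bar e^i_{i+n}}$ of $\LambdaSplitOne_i/\omegaSplitOne_i$ and the analogous basis for index $i+1$; the induced map $\LambdaSplitOne_i/\omegaSplitOne_i \to \LambdaSplitOne_{i+1}/\omegaSplitOne_{i+1}$ is then represented by the explicit $(n-r)\times(n-r)$ matrix displayed in the proof of Lemma \ref{lemma q_i and q_i^*}. Laplace expansion along its bottom row, together with the fact that the remaining $(n-r-1)\times(n-r-1)$ submatrix is upper triangular with ones on the diagonal, shows that the determinant equals $(-1)^{n-r} a_{n-r,r}^{i+1}$.

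First I would use these natural bases to trivialize each $\bigwedge^{\text{top}}\LambdaSplitOne_{i,S}/\omegaSplitOne_i$ on $U_0^\GL$, rescaling a single basis vector by $-1$ if $n-r$ is odd so as to eliminate the sign; this induces a trivialization of $\sQ_i^*$. Relative to this trivialization, the identification of global sections with regular functions provided by Theorem \ref{Main theorem GL_n} yields $q_i^* = a_{n-r,r}^{i+1}$ in $B_\GL$, where the upper index is taken modulo $n$ with representatives $\set{0,\dotsc,n-1}$.

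Second, I would invoke Theorem \ref{Main theorem GL_n} for this specific choice of trivialization. Substituting $q_i^* = a_{n-r,r}^{i+1}$ into its conclusion produces precisely the scheme $U_1^\GL$ in the statement, since for $0 \leq i \leq n-2$ this reads $q_i^* = a_{n-r,r}^{i+1}$ and for $i = n-1$ it reads $q_{n-1}^* = a_{n-r,r}^0$. The main obstacle is only the sign bookkeeping, which is painlessly absorbed into the trivialization because $\sQ_i^*$ is a trivial line bundle on $U_0^\GL$ and its trivializations differ by global units; alternatively, because $p$ is odd, the substitution $u_i \mapsto \zeta u_i$ with $\zeta^{p-1} = (-1)^{n-r}$ absorbs the sign étale locally on $B_\GL$, which is fully compatible with the conclusion being a statement about an étale local model.
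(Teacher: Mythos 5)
Your proposal is correct and follows the paper's own route exactly: the paper obtains this theorem by combining Lemma \ref{lemma q_i and q_i^*} (which yields $q_i^* = a_{n-r,r}^{i+1}$ up to a unit) with Theorem \ref{Main theorem GL_n}, which is precisely your two steps. One small caution on your sign bookkeeping: rescaling the trivializations of the cyclically indexed quotients $\bigwedge^{\text{top}}\LambdaSplitOne_{i,S}/\omegaSplitOne_i$ changes the $q_i^*$ by units whose product around the cycle is $1$, so when $n(n-r)$ is odd you cannot eliminate every factor of $(-1)^{n-r}$ that way — but your fallback of absorbing a unit by adjoining a $(p-1)$th root on an \'{e}tale neighborhood (exactly the mechanism in the proof of Proposition \ref{Local local model}) handles this and is what the paper's ``up to a unit'' implicitly relies on.
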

 \subsection{The general case}
  \label{The General Case}
  We have thus far assumed that $F^+ = \Q$ and $D$ splits over $\Q_p$. In this section, we extend the results to the more general case where $F^+$ is a totally real finite extension of $\Q$.

  Let $F$ be an imaginary quadratic extension of $F^+$, with $p$ an odd rational prime totally split in $F^+$ such that each factor of $(p)$ in $F^+$ splits in $F$. Write $(p) = \prod_jp_j$ in $F^+$ and $p_j = \p_j\p_j^*$ in $F$. Then $F_{\Q_p} = \prod_j F_{\p_j} \times F_{\p_j^*}$ making $D_{\Q_p} = \prod_j D_{\p_j} \times D_{\p_j^*}$. Here $D_{\p_j}$ and $D_{\p_j^*}$ are each respectively a central simple $F_{\p_j}$- and $F_{\p_j^*}$-algebra with the involution $*$ giving $D_{\p_j} \cong D_{\p_j^*}^\opp$. We furthermore assume that $D_{\Q_p}$ splits after an unramified extension $\widetilde{F}$ of $\Q_p$. The splitting of $D_{\Q_p}$ gives $G_{\widetilde{F}} = \prod_j G_j$ where the factor $G_j(R)$ may be written as
   $$\set{(x_1,x_2) \in (D_{\p_j, \widetilde{F}} \otimes R)^\times \times (D_{\p_j^*, \widetilde{F}} \otimes R)^\times : x_1 = c(x_2^*)^{-1} \text{ for some } c \in R^\times}$$
  for an $\widetilde{F}$-algebra $R$. Then $G_j \cong D_{\p_j,\widetilde{F}}^\times \times \G_m \cong \GL_n \times \G_m$. Let $\mu_j$ be the cocharacter $\mu : \G_{m,\overline{\Q}_p} \rightarrow G_{\overline{\Q}_p}$ composed with the $j$th projection in the decomposition above. Then $\mu = \prod_j \mu_j$. The periodic lattice chain $\sL$ is taken to be the product over $j$ of those describe in Section \ref{Shimura Datum and Moduli Description}.
  
  With this data the integral model of the Shimura variety $\sAGL_0$ is the scheme representing the moduli problem given in \cite[Definition 6.9]{RZ}, similar to the moduli problem in Section \ref{Shimura Datum and Moduli Description}. We then define $\sAGL_1$ as in Definition \ref{AGL_1 Definition}. After an unramified base extension, the local model $\Mloc$ as given in Definition \ref{def RZ local model} is a product of the local models in the case $F^+ = \Q$, indexed by the factors of $(p)$ in $F^+$. Let $\Mloc_j$ denote the $j$th factor. For $0 \leq i \leq n-1$, we have the universal line bundle $\sQ_{j,i}^*$ on $\Mloc_j$ with the global section $q_{j,i}^*$ as defined in Section \ref{sect the group schemes G_i}. Let $U = \prod_j U_j$ be an affine open subscheme of $\MlocGL$ such that $U_j \subset \Mloc_j$ and each $\sQ_{j,i}^*$ is trivial on $U_j$. Choosing a trivialization, we identify the $q_{j,i}^*$ with regular functions on $U_j$.
 
 \begin{theorem}
  Let $U_1 = \prod_j U_{1,j}$, where
   $$U_{1,j} = \Spec_{U_j}\bigg(\sO[u_0, \dots, u_{n-1}]/(u_0^{p-1} - q_{j,0}^*, \dots, u_{n-1}^{p-1} - q_{j,n-1}^*)\bigg).$$
  Then $U_1$ is an \'{e}tale local model of $\sAGL_1$.
 \end{theorem}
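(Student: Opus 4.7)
The plan is to adapt the arguments of Proposition \ref{Local local model} and Theorem \ref{Main theorem GL_n} to the product structure. First, I would fix a geometric point $\widetilde{x} : \Spec(k) \to \sAGL_1$, set $x = \pi(\widetilde{x})$, and invoke the local model diagram in the general PEL setting of \cite{RZ} to obtain an \'etale neighborhood $V \to \sAGL_0$ of $x$ together with an \'etale morphism $\psi : V \to \MlocGL$ whose image lies inside the given affine open $U = \prod_j U_j$, on which every $\sQ_{j,i}^*$ is trivial. Fixing a trivialization, the $q_{j,i}^*$ pull back to regular functions $\psi^*(q_{j,i}^*)$ on $V$, and similarly for $\psi^*(q_{j,i})$.

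Next, under Morita equivalence, the decomposition $D_{\Q_p} = \prod_j D_{\p_j} \times D_{\p_j^*}$ produces a family of finite flat group schemes $G_{j,i}$ of order $p$, one for each pair $(j, i)$ with $j$ ranging over the factors of $(p)$ in $F^+$ and $0 \leq i \leq n-1$. The morphism $\varphi : \sAGL_0 \to OT \times \cdots \times OT$ of Definition \ref{AGL_1 Definition} decomposes accordingly into projections $\rho_{j,i} : \sAGL_0 \to OT$. The argument of Proposition \ref{Morphism to OT} now applies to each factor separately: its two inputs, namely reducedness of the special fiber and normality of $\MlocGL$ (from \cite{GortzGL} and \cite{PZ}), are both preserved under the relevant product decomposition, and the dimensional criterion of Corollary \ref{Oort-Tate generator locus} applies factor-wise. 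Shrinking $V$ if necessary, this produces units $\varepsilon_{j,i} \in \Gamma(V, \sO_V)$ with
\[
\rho_{j,i}^*(X_{j,i}) = \varepsilon_{j,i}\, \psi^*(q_{j,i}^*), \qquad \rho_{j,i}^*(Y_{j,i}) = w_p \varepsilon_{j,i}^{-1}\, \psi^*(q_{j,i}).
\]

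Third, set $\widetilde{V} = V \times_{\sAGL_0} \sAGL_1$. Using that $\sG_{OT}^\times \to OT$ is relatively representable, flat, and finite of degree $p-1$, the fiber product unwinds as
\[
\widetilde{V} \;\cong\; \Spec_V\!\left(\sO[u_{j,i}] \big/ \bigl(u_{j,i}^{p-1} - \rho_{j,i}^*(X_{j,i})\bigr)_{j,i}\right).
\]
After further shrinking $V$, choose a $(p-1)$-th root $\tau_{j,i}$ of each $\varepsilon_{j,i}$. The substitution $u_{j,i} \mapsto \tau_{j,i} u_{j,i}$ defines a morphism $\widetilde{V} \to U_1$ realizing an isomorphism $\widetilde{V} \cong U_1 \times_U V$. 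Since $V \to U$ is \'etale, the base change $\widetilde{V} \to U_1$ is \'etale as well, exhibiting $U_1$ as an \'etale local model of $\sAGL_1$.

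The main obstacle I anticipate is cleanly handling the interaction between the product decomposition $U = \prod_j U_j$ and the fact that this decomposition is manifest only after an unramified base extension $\widetilde{F}/\Q_p$. However, since \'etaleness, normality, and reducedness of the special fiber all descend under faithfully flat (in particular unramified) base change, this is a formal technicality rather than a substantive difficulty. Once the factor-by-factor analysis of the $\rho_{j,i}$ is in hand, the proof proceeds identically to the $F^+ = \Q$ case, with the single Oort-Tate generator at each index $i$ replaced by one at each pair $(j,i)$.
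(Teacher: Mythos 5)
Your proposal is correct and follows essentially the same route as the paper, which states this theorem without a separate proof precisely because, after the unramified base extension, the local model decomposes as a product of copies of the $F^+=\Q$ local model and the arguments of Proposition \ref{Morphism to OT} and Proposition \ref{Local local model} apply factor-by-factor, exactly as you carry out. Your explicit remarks on why reducedness of the special fiber and normality persist for the product, and on the harmlessness of the unramified extension $\widetilde{F}/\Q_p$, fill in details the paper leaves implicit but do not constitute a different argument.
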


 \section{Integral and local model of $\sAGSp_1$ and $\sAGSp_{1,\bal}$}
  \label{Local model of sAGSp1}
  Let $\set{e_1, \dots, e_{2n}}$ be the standard basis of $V = \Q_p^{2n}$ and equip $V$ with the standard symplectic pairing $(\cdot, \cdot)$ where $(e_i, e_{2n+1-j}) = \delta_{ij}$. Define the $\Z_p$-lattice chain $\Lambda_0 \subset \Lambda_1 \subset \dots \subset \Lambda_{2n-1}$ where
   $$\Lambda_i = \generate{p^{-1}e_1, \dots, p^{-1}e_i, e_{i+1}, \dots, e_{2n}} \subset \Q_p^{2n} \text{ as a } \Z_p\text{-module}$$
  and extend it periodically to all $i \in \Z$ by $\Lambda_{i+2n} = p^{-1}\Lambda_i$. Then $\sL = (\Lambda_i)_i$ is a full periodic self-dual lattice chain with $\Lambda_i^\perp = \Lambda_{-i}$. Take $\G = \GSp_{2n}$ with minuscule cocharacter $\mu = (0^n, (-1)^n)$. The reflex field at $p$ is $\Q_p$, $K^p$ is a sufficiently small compact open subgroup of $\G(\A_f^p)$, and $K_p = \Aut(\sL)$, the automorphism group of the polarized multichain $\sL$. 
  
  With this data, the moduli problem \cite[Definition 6.9]{RZ} for the integral model $\sAGSp_0$ is equivalent to the following. For any $\Z_p$-scheme $S$, $\sAGSp_0(S) = \sAGSp_{0,K^p}(S)$ is the set of tuples $(A_\bullet, \lambda_0, \lambda_n, \bar{\eta})$, up to isomorphism, where
  \begin{compactenum}[(i)]
   \item $A_\bullet$ is a chain $A_0 \xrightarrow{\alpha_0} A_1 \xrightarrow{\alpha_1} \dotsa \xrightarrow{\alpha_{n-1}} A_n$ of $n$-dimensional abelian schemes over $S$ with each morphism $\alpha_i$ an isogeny of degree $p$;
   \item the maps $\lambda_0 : A_0 \rightarrow \hat{A}_0$ and $\lambda_n : A_n \rightarrow \hat{A}_n$ are principal polarizations making the loop starting at any $A_i$ or $\hat{A}_i$ in the diagram
   \begin{center}
    \begin{tikzcd}
     A_0 \arrow{r}{\alpha_0} &
     A_1 \arrow{r}{\alpha_1} &
     \dotsa \arrow{r}{\alpha_{n-1}} &
     A_n \arrow{d}{\lambda_n} \\
     \hat{A}_0 \arrow{u}{\lambda_0^{-1}}&
     \hat{A}_1 \arrow{l}[swap]{\alpha_0^\vee} &
     \dotsa \arrow{l}[swap]{\alpha_1^\vee} &
     \hat{A}_n \arrow{l}[swap]{\alpha_{n-1}^\vee}
    \end{tikzcd}
   \end{center}
   multiplication by $p$; and
   \item $\bar{\eta}$ is a $K^p$-level structure on $A_0$ (see \cite[Section 5]{Ko92} for details).
  \end{compactenum}

  There is an alternative description of this moduli problem in terms of chains of finite flat group subschemes instead of chains of isogenies \cite[Section 1]{dJ}. The local model $\Mloc_\GSp$ is the $\Z_p$-scheme representing the following functor.  An $S$-valued point of $\Mloc_\GSp$ is given by a commutative diagram
  \begin{center}
   \begin{tikzcd}
    \Lambda_{0,S} \arrow{r} &
    \Lambda_{1,S} \arrow{r} &
    \dotsa \arrow{r} &
    \Lambda_{2n-1,S} \arrow{r} &
    p^{-1}\Lambda_{0,S} \\
    \omegaSplitOne_0 \arrow{r} \arrow[hook]{u}&
    \omegaSplitOne_1 \arrow{r} \arrow[hook]{u}&
    \dotsa \arrow{r} &
    \omegaSplitOne_{2n-1} \arrow{r} \arrow[hook]{u}&
    p^{-1}\omegaSplitOne_0 \arrow[hook]{u}
   \end{tikzcd}
  \end{center}
  where $\Lambda_{i,S}=\Lambda_i \otimes_{\Z_p} \sO_S$, the morphisms $\Lambda_{i,S} \rightarrow \Lambda_{i+1,S}$ are induced by the inclusions $\Lambda_i \subset \Lambda_{i+1}$, $\omegaSplitOne_i$ are locally free $\sO_S$-submodules of rank $n$ which are Zariski-locally direct summands of $\Lambda_{i,S}$, and the $\omegaSplitOne_i$ satisfy the following duality condition: the map $\omegaSplitOne_i \rightarrow \Lambda_{i,S} \xrightarrow{\sim} \hat{\Lambda}_{2n-i,S} \rightarrow \hat{\omegaSplitOne}_{2n-i}$ is zero for all $i$. Here $\hat{\cdot} = \sheafhom_{\sO_S}(\cdot, \sO_S)$ and $\Lambda_{i,S} \xrightarrow{\sim} \hat{\Lambda}_{2n-i,S}$ is induced from the duality $\Lambda_i \xrightarrow{\sim} \hat{\Lambda}_{2n-i}$. It follows that for $0 \leq i \leq n-1$, $\sF_{2n-i}$ is determined by $\sF_i$.

  $\Mloc_\GSp$ is readily seen to be representable as a closed subscheme of $\Mloc_\GL$. Using the description of the open subscheme $U_0^\GL = \Spec(B_\GL) \subset \Mloc_\GL$ from Section \ref{sect local model of sAGL_1}, the duality condition imposes the following additional equations for $1 \leq i \leq n-1$ \cite[5.1]{GortzGSp}.
   $$a_{jk}^{2n-i} = \varepsilon_{jk}a_{n-k+1,n-j+1}^{i} \quad  \text{with} \quad \varepsilon_{jk} = \left\{ \begin{array}{cl} 1 & j,k \leq i \text{ or } j,k \geq i+1\\ -1 & \text{otherwise}\end{array} \right.$$
  We denote the resulting ring by $B_\GSp$ and set $U_0^\GSp = \Spec(B_\GSp)$. For an $S$-valued point of $\sA_0^\GSp$, let $\set{G_i}_{i=0}^{n-1}$ and $\set{G_i^*}_{i=0}^{n-1}$ be the collection of group schemes over $S$ where $G_i = \ker(A_i \rightarrow A_{i+1})$ and $G_i^* = \ker(\hat{A}_{i+1} \rightarrow \hat{A}_i)$. Note that $G_i$ is the Cartier dual of $G_i^*$. Then the results from Section \ref{sect the group schemes G_i} on the dimension of the invariant differentials of the group schemes carry over. The association of an $S$-valued point with the collection $\set{G_0, \dots, G_{n-1}, G_{n-1}^*, \dots, G_0^*}$ of group schemes induces the morphism
   $$\sAGSp_0 \rightarrow \overbrace{OT \times \dotsb \times OT}^{2n \text{ times}}.$$
  
  \subsection{$\Iw_1(p)$-level subgroup}
   \label{subsect Iw_1(p)-level subgroup}
			We say that $K = K_pK^p$ is an $\Iw_1(p)$-level subgroup if $K_p$ stabilizes the lattice chain $\set{\Lambda_i}$ and pointwise fixes $\Lambda_{i+1}/\Lambda_i$ for $0 \leq i \leq n-1$. The resulting Shimura variety admits the moduli description stated in the introduction and we therefore define the integral model as follows.			 
			\begin{definition}
			$\sAGSp_1$ is the fibered product
			\begin{center}
					\begin{tikzcd}
							\sAGSp_1 \arrow{r}\arrow{d}[swap]{\pi} & \sG_{OT}^\times \times \dotsb \times \sG_{OT}^\times \arrow{d}\\
							\sAGSp_0 \arrow{r}{\varphi} & \overbrace{OT \times \dotsb \times OT}^{n \text{ times}}.
					\end{tikzcd}
				\end{center}
				where the bottom arrow is $\sAGSp_0 \rightarrow \overbrace{OT \times \dotsb \times OT}^{2n \text{ times}}$ followed by the projection onto the first $n$ factors.
			\end{definition}

			We define $\sQ_i$, $\sQ_i^*$ and their global sections $q_i, q_i^*$ as in Section \ref{sect the group schemes G_i}. As $\MlocGSp$ is flat over $\Spec(\Z_p)$ and the special fiber is reduced \cite{GortzGSp}, the results from Sections 3 and 4 carry over as well in the appropriate manner.

			\begin{theorem}
			\label{main theorem for GSp with Iw_1(p) level}
			Denote by $U_1^\GSp$ the spectrum of
					$$B_\GSp[u_0, \dotsc, u_{n-1}]/(u_0^{p-1} - a_{n-r,r}^1, \dotsc, u_{n-2}^{p-1} - a_{n-r,r}^{n-1}, u_{n-1}^{p-1} - a_{n-r,r}^0).$$
			Then $U_1^\GSp$ is an \'{e}tale local model of $\sAGSp_1$.
			\end{theorem}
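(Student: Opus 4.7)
The plan is to follow exactly the same strategy as in the unitary case, transporting Propositions \ref{Morphism to OT} and \ref{Local local model} and Lemma \ref{lemma q_i and q_i^*} to the symplectic setting. First I would invoke the local model diagram $\sAGSp_0 \xleftarrow{\Phi} \widetilde{\sAGSp_0} \xrightarrow{\Psi} \MlocGSp$ in the sense of Theorem \ref{thm local model diagram}, which in the symplectic case is \cite[Chapter 3]{RZ}. By \cite{GortzGSp}, $\MlocGSp$ is flat over $\Z_p$ of finite type with smooth generic fiber and reduced special fiber, hence normal by the criterion used in the proof of Proposition \ref{Morphism to OT}; any étale neighborhood $V$ inherits normality.

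For an $S$-point of $\sAGSp_0$ the group schemes $G_i = \ker(A_i \to A_{i+1})$ and $G_i^* = \ker(\hat{A}_{i+1} \to \hat{A}_i)$ are finite flat of order $p$. The dimension computations of Proposition \ref{prop invariant differentals and quotients} now transport directly (no Morita reduction is needed, so $G_i = H_i$): $q_i^*$ vanishes on the locus where $G_i$ is infinitesimal and $q_i$ vanishes on the locus where $G_i^*$ is infinitesimal, as in Corollary \ref{Oort-Tate generator locus}. Combining these with the reducedness and normality of $V$, the verbatim argument of Proposition \ref{Morphism to OT} shows that each of the first $n$ factors of the morphism $\sAGSp_0 \to OT \times \cdots \times OT$ is given on $V$ by
$$\rho_i^*(X_i) = \varepsilon_i\,\psi^*(q_i^*), \qquad \rho_i^*(Y_i) = w_p\varepsilon_i^{-1}\,\psi^*(q_i),$$
for a unit $\varepsilon_i \in \Gamma(V, \sO_V)$, using that $q_i q_i^* = p$ and $X_i Y_i = w_p p$ pin down the two units up to a common factor.

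From here the analogue of Proposition \ref{Local local model} follows by choosing a $(p-1)$th root $\tau_i$ of each $\varepsilon_i$ on a possibly smaller étale neighborhood, which identifies
$$V \times_{\sAGSp_0} \sAGSp_1 \;\cong\; V \times_U \Spec_U\!\bigl(\sO[u_0,\dots,u_{n-1}]/(u_0^{p-1} - q_0^*, \dots, u_{n-1}^{p-1} - q_{n-1}^*)\bigr).$$
Taking $U = U_0^\GSp$, which contains the worst point and hence serves as an étale local model of $\sAGSp_0$, the coordinates $a_{jk}^i$ on $U_0^\GSp$ are the restrictions of the corresponding coordinates on $U_0^\GL$ under the closed embedding imposed by the duality relations of \cite[5.1]{GortzGSp}. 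Consequently Lemma \ref{lemma q_i and q_i^*} applies unchanged and yields $q_i^* = a_{n-r,r}^{i+1}$ up to a unit, which can be absorbed into the $\tau_i$. Substituting this identification into the construction above produces precisely $U_1^\GSp$, proving the theorem.

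The step that needs the most care is checking that the $i$th projection of the morphism $\sAGSp_0 \to OT \times \cdots \times OT$ really corresponds to the group scheme $G_i$ and that the pair $(q_i^*, q_i)$ matches the $(X_i, Y_i)$-presentation of $OT$ in the right order; this depends on tracking the Cartier duality convention through the definition of $\sAGSp_1$. Once this bookkeeping is fixed, every remaining step is a direct translation of the arguments of Section \ref{sect local model of sAGL_1}, using the properties of $\MlocGSp$ in place of those of $\MlocGL$.
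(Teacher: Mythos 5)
Your proposal is correct and follows essentially the same route as the paper, which itself proves this theorem only by remarking that the flatness and reducedness of $\MlocGSp$ from \cite{GortzGSp} allow the arguments of Sections 3 and 4 to ``carry over in the appropriate manner.'' You have simply made explicit the transport of Propositions \ref{Morphism to OT} and \ref{Local local model} and Lemma \ref{lemma q_i and q_i^*}, identifying exactly the inputs (normality of the local model, the relation $q_iq_i^*=p$, and the restriction of the coordinates $a_{jk}^i$ under the closed embedding $U_0^\GSp \subset U_0^\GL$) that make the unitary argument go through verbatim.
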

  \subsection{$\Iw_1^\bal(p)$-level subgroup}
   \label{subsect Iw_1^bal(p)-level subgroup}
   We say that $K = K_pK^p$ is an $\Iw_1^\bal(p)$-level subgroup if $K_p$ is the pro-unipotent radical of an Iwahori. The resulting Shimura variety admits the following moduli description. With $S$ a scheme over $\Q_p$, to give an $S$-valued point of $\sAGSp_{1,\bal} \otimes \Q_p$ it is equivalent to an $S$-valued point of $\sAGSp_0 \otimes \Q_p$ and the collections $\set{g_i}_{i=0}^{n-1}$ and $\set{g_i^*}_{i=0}^{n-1}$. Here, $g_i$ and $g_i^*$ are Oort-Tate generators of the group schemes $G_i$ and $G_i^*$ respectively and are subject the the condition that that $g_ig_i^*$ is independent of $i$. Note that $(g_ig_i^*)^{p-1} = \omega_p$, and thus $\set{g_0, \dots, g_{n-1}}$ determines $\set{g_0, \dots, g_{2n-1}}$ once a $(p-1)$th root of $\omega_p$ is chosen. Therefore letting $\sAGSp_0[t] = \Spec_{\sAGSp_0}(\sO[t]/(t^{p-1} - \omega_p))$, we define the integral model as follows.
   
			\begin{definition}
			$\sAGSp_{1,\bal}$ is the fibered product
			\begin{center}
					\begin{tikzcd}
							\sAGSp_1 \arrow{r}\arrow{d}[swap]{\pi} & \sG_{OT}^\times \times \dotsb \times \sG_{OT}^\times \arrow{d}\\
							\sAGSp_0[t] \arrow{r}{\varphi} & \overbrace{OT \times \dotsb \times OT}^{n \text{ times}}
					\end{tikzcd}
					\end{center}
					where the bottom morphism factors through the projection $\sAGSp_0[t] \rightarrow \sAGSp_0$.
			\end{definition}

			We define $\sQ_i$, $\sQ_i^*$ and their global sections $q_i, q_i^*$ as in Section \ref{sect the group schemes G_i}.

			\begin{theorem}
				\label{main theorem for GSp with Iw_1^bal(p) level}
				The scheme $U_{1,\bal}^{\GSp} = \Spec_{U_1^\GSp}(\sO[t]/(t^{p-1} - p))$	is an \'{e}tale local model of $\sAGSp_{1,\bal}$.
			\end{theorem}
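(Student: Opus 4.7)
The plan is to leverage the local model theorem already established for $\sAGSp_1$ (Theorem \ref{main theorem for GSp with Iw_1(p) level}) together with a base change argument, avoiding any need to re-run the Oort-Tate bookkeeping of Proposition \ref{Morphism to OT} in the symplectic setting.

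First, I would rewrite the fiber product defining $\sAGSp_{1,\bal}$. Since the bottom morphism $\sAGSp_0[t]\to OT^n$ factors through $\sAGSp_0[t]\to\sAGSp_0$, associativity of fiber products gives
$$\sAGSp_{1,\bal} \;\cong\; \sAGSp_0[t]\times_{\sAGSp_0}\sAGSp_1 \;\cong\; \sAGSp_1\times_{\Z_p}\Spec(\Z_p[t]/(t^{p-1}-\omega_p)),$$
using that $\sAGSp_0[t]=\sAGSp_0\times_{\Z_p}\Spec(\Z_p[t]/(t^{p-1}-\omega_p))$. Consequently, the structure morphism $\sAGSp_{1,\bal}\to\sAGSp_1$ is obtained by base change along $\Spec(\Z_p[t]/(t^{p-1}-\omega_p))\to\Spec(\Z_p)$.

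Second, given a geometric point $\widetilde{x}:\Spec(k)\to\sAGSp_{1,\bal}$ with image $y\in\sAGSp_1$, Theorem \ref{main theorem for GSp with Iw_1(p) level} provides an \'etale neighborhood $V\to\sAGSp_1$ of $y$ equipped with an \'etale morphism $V\to U_1^\GSp$. Base-changing everything by $\Spec(\Z_p[t]/(t^{p-1}-\omega_p))$ yields an \'etale neighborhood $\widetilde{V}:=V\times_{\Z_p}\Spec(\Z_p[t]/(t^{p-1}-\omega_p))$ of $\widetilde{x}$ in $\sAGSp_{1,\bal}$ together with an \'etale morphism $\widetilde{V}\to U_1^\GSp\times_{\Z_p}\Spec(\Z_p[t]/(t^{p-1}-\omega_p))$.

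Finally, I would identify the target above with $U_{1,\bal}^\GSp$. Since $\omega_p\in p\Z_p^\times$ and $p\nmid p-1$, the extension $\Z_p[\delta]/(\delta^{p-1}-\omega_p/p)$ is finite \'etale over $\Z_p$; after this further \'etale base change the substitution $t\mapsto\delta t$ produces an isomorphism
$$\Spec(\Z_p[t]/(t^{p-1}-\omega_p)) \;\cong\; \Spec(\Z_p[t]/(t^{p-1}-p)),$$
and hence an \'etale morphism from (a further \'etale cover of) $\widetilde{V}$ to $U_{1,\bal}^\GSp=U_1^\GSp\times_{\Z_p}\Spec(\Z_p[t]/(t^{p-1}-p))$. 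This gives the required \'etale local model diagram for $\sAGSp_{1,\bal}$. The only point demanding care is the final comparison of $t^{p-1}-\omega_p$ with $t^{p-1}-p$, which is not a Zariski-local identification but is resolved by the \'etale extension $\delta$; everything else reduces formally to the $\Iw_1(p)$-case via base change.
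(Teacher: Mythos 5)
Your argument is correct and is essentially the argument the paper intends (no explicit proof is given there): the definition of $\sAGSp_{1,\bal}$ immediately yields $\sAGSp_{1,\bal}\cong\sAGSp_1\times_{\Z_p}\Spec(\Z_p[t]/(t^{p-1}-\omega_p))$, so one base-changes the local model diagram of Theorem \ref{main theorem for GSp with Iw_1(p) level}. Your care with the discrepancy between $t^{p-1}-\omega_p$ and $t^{p-1}-p$, resolved by adjoining a $(p-1)$th root of the unit $\omega_p/p$ after a further \'etale extension, is exactly the mechanism used for the units $\varepsilon_i$ in Proposition \ref{Local local model}.
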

		Note that for all $n \geq 1$, $U_{1,\bal}^\GSp$ is not normal. In particular, $\frac{t}{u_i}$ is not a regular section of $\sO_{U_{1,\bal}^\GSp}$ for any $i$ but
			$$\left(\frac{t}{u_i}\right)^{p-1} = \frac{p}{a_{n-r,r}^{i+1}} = a_{11}^{i}$$
		since $a_{11}^ia_{n-r,r}^{i+1} = p$. Thus in constructing the normalization of $U_{1,\bal}^{\GSp}$, we must at least adjoin elements $u_i^*$ for $0 \leq i \leq n-1$ satisfying $u_iu_i^* = t$ and $(u_i^*)^{p-1} = a_{11}^i$ to the coordinate ring $\sO_{U_{1,\bal}^\GSp}$. By adjoining the elements and relations, the resulting scheme has been shown to be normal \cite{HS} and hence gives the normalization. This will be used in Section \ref{sect Iw_1^bal(p) resolution}.

 \section{A Resolution of $\sAGSp_1$ for $n=2$}
  \label{A Resolution of sAGSp_1 for GSp_4}
  In this section we will only be utilizing the integral and local models associated with $\G = \GSp_4$ and $K^p = K(N)$, where the integer $N$ is such that $p \nmid N$ and $N \geq 3$. As such, all unnecessary subscripts and superscripts will be removed from the notation. Let $\breve E$ denote the completion of the maximal unramified extension of the reflex field $E = \Q_p$. All schemes in this section are of finite type over $\Spec(\sO_{\breve E})$.
  
  The connected components of $\sA_0 \otimes_{\Z_p} \sO_{\breve E}$ are indexed by the primitive $N$th roots of unity \cite[Section 2.1]{HainesConnectedComponents}. The number of connected components of the KR-strata (see below) in each connected component of $\sA_0 \otimes_{\Z_p} \sO_{\breve E}$ is the same. As will be seen, it thus suffices to consider a single connected component of $\sA_0 \otimes_{\Z_p} \sO_{\breve E}$. We abuse the notation by denoting such a connected component of $\sA_0 \otimes_{\Z_p} \sO_{\breve E}$ by $\sA_0$. Likewise we write $\sA_1$ for the inverse image of this connected component with respect to the morphism $\sA_1 \otimes_{\Z_p} \sO_{\breve E} \rightarrow \sA_0 \otimes_{\Z_p} \sO_{\breve E}$ and $\Mloc$ for $\Mloc \otimes_{\Z_p} \sO_{\breve E}$. Note that $\breve E \otimes_{\Z_p} \F_p$ is an algebraic closure of $\F_p$, written as $\overline{\F}_p$, and that $\sA_0 \otimes \overline{\F}_p$ is connected \cite[Lemma 13.2]{H}.

  At each stage of the construction of the resolution, we will first work on a local model and then carry this over to the corresponding integral model by producing a ``linear modification'' in the sense of \cite[Section 1]{P}. Thus when performing a blowup of a local model, we require that the subscheme being blown up correspond to a subscheme of the integral model. In order to understand more of the global structure of the resolution, such as the number of irreducible components and how they intersect, it is also necessary to track how certain subschemes transform with each modification. 
  
  For a subscheme $Z \subset X$, $Z^\red$ denotes the corresponding reduced scheme. For a morphism of schemes $f : Y \rightarrow X$, $f^{-1}(Z)$ means the scheme-theoretic pullback $Y \times_X Z$ unless otherwise noted. 
  
  \begin{definition}
   \cite[Definition IV-15]{EisenbudHarris}
   Let $X$ be a scheme, $\sZ \subset X$ a subscheme. We say that $\sZ$ is Cartier at a closed point $p$ of $X$ if in an affine open neighborhood of $p$, $\sZ$ is the zero locus of a single regular function which is not a zero divisor. We say that $\sZ$ is a Cartier subscheme of $X$ if $\sZ$ is Cartier at all closed points of $X$.
  \end{definition}
  
  \begin{definition}
  Let $\rho : X' \rightarrow X$ be a modification, i.e.\ a proper birational morphism.
  \begin{compactenum}
   \item If $\rho$ is given by the blowup of a closed subscheme $\sZ$ of $X$, then $\sZ$ is called a center of $\rho$.  
   \item The true center of $\rho$ is the closed reduced subscheme of $X$ given set-theoretically by the complement of the maximal open subscheme where $\rho$ is an isomorphism.
   \item The fundamental center of $\rho$ is the reduced subscheme of $C_\rho$ whose support is given by the closed points with fiber of dimension at least one.
   \item The residual locus of $\rho$ is the complement $C_\rho \setminus C_\rho^\fund$.
   \item The exceptional locus of $\rho$ is $\rho^{-1}(C_\rho)^\red$.
   \item The strict transform of a subscheme $\sW \subset X$ is either $\rho^{-1}(\sW)^\red$ if $\sW \subset C_\rho$ or the Zariski closure of $\rho^{-1}(\sW \setminus C_\rho)$ inside of $X'$ with reduced scheme structure if $\sW \not\subset C_\rho$.
  \end{compactenum}
 \end{definition}
 \begin{remark}
  When $\rho$ is a modification with a center $\sZ$, we will often say ``the center'' when there is a canonical choice of $\sZ$. By upper semi-continuity of the dimension of the fiber, the fundamental center is a closed subscheme of the true center. Since all schemes are of finite type over $\Spec(\sO_{\breve E})$, the fiber over a closed point of the residual locus is a finite collection of closed points.
 \end{remark}
  
  \begin{definition}
   Let $M$ be an \'{e}tale local model of $X$. Fix a triple $(V, \varphi, \psi)$ where $\varphi : V \rightarrow X$ is an \'{e}tale cover and $\psi : V \rightarrow M$ is an \'{e}tale morphism. We say that a subscheme $Z \subset M$ \'{e}tale locally corresponds to a subscheme $\sZ \subset X$ with respect to $(V, \varphi, \psi)$ if $\varphi^{-1}(\sZ) = \psi^{-1}(Z)$ as subschemes of $V$.
  \end{definition}
  
  We gather here some facts that will be used implicitly throughout the construction. With $(V, \varphi, \psi)$ fixed as in the definition above, if $\sZ_1, \sZ_2 \subset X$ \'{e}tale locally correspond to $Z_1, Z_2 \subset M$ respectively, then so do their union, intersection, complement, and Zariski-closure. Also $\Bl_{Z_1}(M)$ is an \'{e}tale local model of $\Bl_{\sZ_1}(X)$ by taking 
   $$V' := \Bl_{\sZ_1}(X) \times_X V = \Bl_{Z_1}(M) \times_M V$$
  with $\varphi' : V' \rightarrow \Bl_{\sZ_1}(X)$ and $\psi' : V' \rightarrow \Bl_{Z_1}(M)$ the pullbacks of $\varphi$ and $\psi$ respectively. Under an \'{e}tale morphism, the pullback of the locus where a subscheme is Cartier is precisely the locus where the pullback of the subscheme is Cartier. Thus with respect to $(V, \varphi, \psi)$ and the induced $(V', \varphi', \psi')$, the true centers, fundamental centers, residual loci, and exceptional loci \'{e}tale locally correspond with respect to the morphisms $\rho_X : \Bl_{\sZ}(X) \rightarrow X$ and $\rho_M : \Bl_Z(M) \rightarrow M$. Furthermore, it is easy to show that $\ST_{\rho_X}(\sZ_2)$ \'{e}tale locally corresponds to $\ST_{\rho_M}(Z_2)$, again with respect to $(V', \varphi', \psi')$.
  
  The next lemma roughly says that in each step of our construction, the fiber of a morphism can be seen \'{e}tale locally. That it is applicable to every step will become apparent.
  \begin{lemma}
   \label{fiber correspondence}
   Let $M$ and $M'$ be \'{e}tale local models of $X$ and $X'$ respectively. Suppose there is an \'{e}tale cover $\varphi : V \rightarrow X$ with an \'{e}tale morphism $\psi : V \rightarrow M$ along with the morphisms $\rho_X$, $\rho_V$, and $\rho_M$ giving the diagram
   \begin{center}
    \begin{tikzcd}
     X' \arrow{d}[swap]{\rho_X}&
     V' \arrow{l}[swap]{\varphi'} \arrow{r}{\psi'} \arrow{d}{\rho_V} &
     M' \arrow{d}{\rho_M}\\
     X & 
     V \arrow{l}[swap]{\varphi} \arrow{r}{\psi} &
     M
    \end{tikzcd}
   \end{center}
   where the left and right squares are cartesian. Let $v$ be a closed point of $V$ and set $x = \varphi(v)$ and $y = \psi(v)$. Then $\rho_X^{-1}(x) \cong \rho_M^{-1}(y)$.
  \end{lemma}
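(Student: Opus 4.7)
The plan is to identify both $\rho_X^{-1}(x)$ and $\rho_M^{-1}(y)$ with the common intermediate fiber $\rho_V^{-1}(v)$ using the two cartesian squares. From the left cartesian square $V' = X' \times_X V$, a standard fibered-product computation gives
$$\rho_V^{-1}(v) \;=\; V' \times_V \Spec(k(v)) \;=\; X' \times_X \Spec(k(v)) \;=\; \rho_X^{-1}(x) \times_{\Spec(k(x))} \Spec(k(v)),$$
where the last equality uses that the composition $\Spec(k(v)) \to V \to X$ factors through $\Spec(k(x))$. Applying the symmetric argument to the right cartesian square $V' = M' \times_M V$ yields the analogous identification $\rho_V^{-1}(v) \cong \rho_M^{-1}(y) \times_{\Spec(k(y))} \Spec(k(v))$.

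Combining the two will give the desired isomorphism once the base changes by $k(v)$ are shown to be trivial, i.e.\ that $k(v) = k(x) = k(y)$. This residue-field comparison is the main subtlety, and it is where the hypothesis that all schemes in Section \ref{A Resolution of sAGSp_1 for GSp_4} are of finite type over $\Spec(\sO_{\breve E})$ enters. The residue field of $\sO_{\breve E}$ is the algebraically closed field $\overline{\F}_p$, so any closed point of a finite-type $\sO_{\breve E}$-scheme lying in the special fiber has residue field equal to $\overline{\F}_p$. Since $\varphi$ and $\psi$ are \'{e}tale (hence quasi-finite of finite type), closed points map to closed points, and the residue-field extensions $k(v)/k(x)$ and $k(v)/k(y)$ are finite separable; both source and target then equal $\overline{\F}_p$, so all three fields coincide.

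With $k(v) = k(x) = k(y)$ in hand, the two base changes above are trivial, and the two identifications combine to give $\rho_X^{-1}(x) \cong \rho_V^{-1}(v) \cong \rho_M^{-1}(y)$. The only step that is not purely formal is the residue-field comparison; everything else is a direct application of associativity of fibered products together with the cartesian hypothesis on the two squares of the diagram.
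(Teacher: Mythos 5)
Your proof is correct and follows essentially the same route as the paper's: both fibers are identified with the fiber of $\rho_V$ over $v$ via associativity of fibered products applied to the two cartesian squares, after checking $k(x)=k(v)=k(y)$. The paper simply asserts the residue-field equality from \'{e}taleness, whereas you justify it via the $\sO_{\breve E}$-structure (implicitly assuming, as the paper does, that the closed points in question lie in the special fiber, which is the case in every application).
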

  \begin{proof}
   Since $\varphi$ and $\psi$ are \'{e}tale, $k(x) = k(v) = k(y)$ where $k(\cdot)$ denotes the residue field. Thus 
   \begin{align*}
    \rho_X^{-1}(x) &= X' \times_X \Spec(k(x)) = (X' \times_X V) \times_V \Spec(k(x))\\
    &= (M' \times_M V) \times_V \Spec(k(x)) = M' \times_M \Spec(k(x))\\
    &= (M' \times_M \Spec(k(y)) ) \times_{\Spec(k(y))} \Spec(k(x))\\
    &= \rho_M^{-1}(y) \times_{\Spec(k(y))} \Spec(k(x))\\
    &= \rho_M^{-1}(y).
   \end{align*}
  \end{proof}

  Roman letters such as $Z$, $C$, and $E$ will be used to denote subschemes of the local models. Calligraphic letters such as $\sZ$, $\sC$, and $\sE$ denote subschemes of the integral models that \'{e}tale locally correspond to their Roman counterparts. We will use $Z_{ij}$ for irreducible components, $Z_i$ for the center of blowups, $C_i$ for the true centers, and $E$ for the exceptional loci. The schemes constructed in each additional step will be decorated with an additional tick mark $'$, and the superscript $[i]$ denotes $i$ tick marks (e.g. $\sA_0^{[0]} = \sA_0$, $\sA_0^{[1]} = \sA_0'$). Moreover, any subscheme will also be decorated with tick marks, so $\sZ'$ signifies that $\sZ' \subset \sA_0'$.
  
  As mentioned before, it will be necessary to observe how these subschemes of $\sA_0$ transform (either their strict transform or scheme-theoretic inverse image) in each step. To keep track of this, we will use a subscript to denote which step the subscheme will be used in. So for example, $\sC_4$ is a subscheme of $\sA_0$ and it will transform to $\sC_4'''$ in Step 4 which is the true center of the blowup $\sA_1^{[4]} \rightarrow \sA_1'''$.
  
  The subschemes that will be blown up arise from subschemes of $\sA_0$ that are the union of certain KR-strata. These strata are indexed by the $\mu$-admissible alcoves of $\Sp_4$. An alcove of $\Sp_4$ is an alcove of $\SL_4$ (see Section \ref{sect local model of sAGL_1}) that satisfies the following duality condition: there exists a $c \in \mathbb{Z}$ such that $x_i(j) - x_{4-i}(4-j+1) = c$ for $0 \leq i \leq 4$ and $1 \leq j \leq 4$. The affine Weyl group $W^\aff$ of $\Sp_4$ acts simply transitively on the alcoves of size $2$, and hence fixing the base alcove $\tau$ from Section \ref{Local model of sAGSp1} with $r = 2$ we can identify these two sets. Explicitly, $W^\aff$ is the subgroup of $\Z^{4} \rtimes S_4$ generated by the simple affine reflections $s_0 = (-1,0,0,1)(14)$, $s_1 = (12)(34)$, and $s_2 = (2,3)$. The alcoves that make up the $\mu$-admissible set are shown in Figure \ref{gsp4 mu-permissible set} in various shades of gray.  The dimension of the stratum corresponding to $x \in W^\aff$ is equal to $\ell(x)$, where $\ell(\cdot)$ is the length with respect to the Bruhat order. In particular the stratum $S_\tau$ corresponding to the base alcove $\tau$ is the unique KR-stratum of dimension zero. The irreducible components are the extreme alcoves which are shaded medium gray. The $p$-rank zero locus is pictured in dark gray, given by $\overline{\sA_{s_0s_2\tau} \cup \sA_{s_1\tau}}$. The stratification satisfies the property that for $x,y \in W^\aff$, $x \leq y$ with respect to the Bruhat order if and only if $S_x \subset \overline{S_y}$.
  
  \begin{figure}[t]
   \centering
   \includegraphics[scale=0.6]{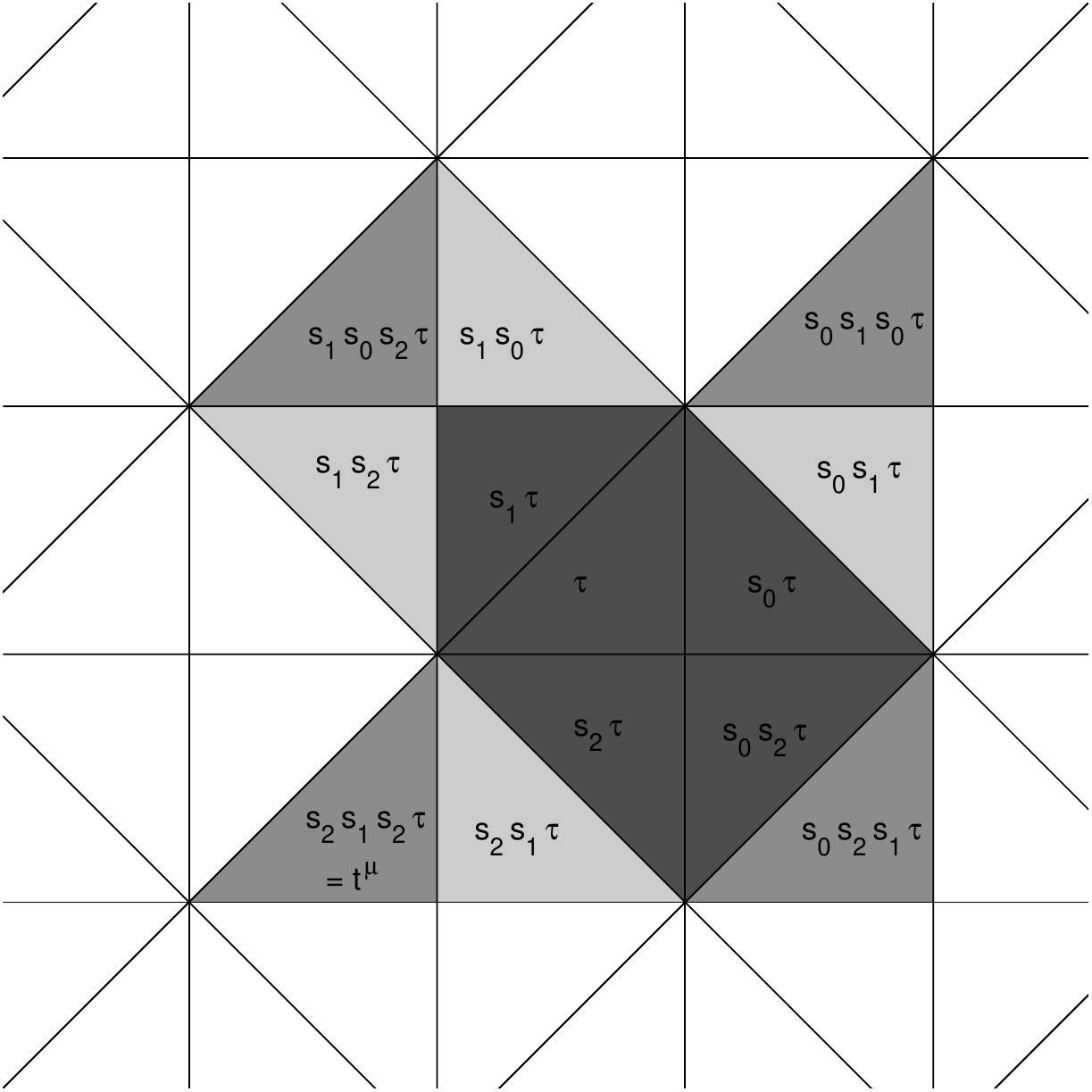}
   \caption{$\mu$-admissible set for $\GSp_4$}
   \label{gsp4 mu-permissible set}
  \end{figure}
  
  For $x \in \Adm(\mu)$, $\sA_x$ will denote the KR-stratum corresponding to $x$ and $n_x$ the number of connected components of $\sA_x$. The number $n_x$ is determined by the prime-to-$p$ level $K^p$ chosen \cite{GYu}.

  \subsection{Step 0: $U_0$, $U_1$, $\sA_0$, and $\sA_1$}
   \label{sect Step 0: U_0, U_1, sA_0, and sA_1}
   \subsubsection{Description of the local models $U_0$ and $U_1$}
    In Section \ref{Local model of sAGSp1}, $U_0$ is given as a subscheme of $\Spec(\sO_{\breve E}[a_{jk}^i; i = 0, \dots, 3, j,k = 1,2])$. By setting $x =  a_{22}^1$, $y = a_{11}^0$, $a = a_{12}^0$, $b = a_{12}^2$ and $c = -a_{12}^1$ we arrive at the \'{e}tale local model derived in \cite{dJ}:
     $$U_0 = \Spec(B) \quad \text{where} \quad B = \sO_{\breve E}[x,y,a,b,c]/(xy-p,ax+by+abc).$$
    With this presentation we have that, up to a unit, $q_0 = y$, $q_0^* = x$, $q_1 = y+ac$, and $q_1^* = x+bc$. The four irreducible components of $U_0 \otimes \overline{\F}_p$ are
     $$Z_{00} = Z(y,a), \quad Z_{01} = Z(y,x+bc), \quad Z_{10} = Z(x,y+ac), \quad Z_{11} = Z(x,b).$$
    Theorem \ref{main theorem for GSp with Iw_1(p) level} gives that $U_1$ is the spectrum of
     $$\sO_{\breve E}[x,y,a,b,c,u,v]/(xy-p, ax+by+abc,u^{p-1}-x,v^{p-1} - x -bc).$$
    The four irreducible components of $U_1 \otimes \overline{\F}_p$ correspond to the ideals $(u, v, b)$, $(u, y+ac)$, $(y, v)$, and $(y, a)$. We will use later that each irreducible component is normal, which can be seen by applying Serre's Criterion \cite[Theorem 23.8]{Matsumura}.
  
   \subsubsection{Description of the integral model $\sA_0$}
    As in \cite[Section 5]{dJ}, we define $\sZ_{ij} = \Phi(\Psi^{-1}(Z_{ij}))^\red$ where $\sA_0 \xleftarrow{\Phi} V \xrightarrow{\Psi} \Mloc$, and so $\sZ_{ij}$ \'{e}tale locally corresponds to $Z_{ij}$. These are precisely the four irreducible components of $\sA_0 \otimes \overline{\F}_p$ \cite[Theorem 1.1]{YuSiegel}. The following subschemes will be used throughout the construction.
    \begin{center}
     \begin{tabular}{rcl|rcl}
      \multicolumn{3}{c|}{Local model $U_0$} & \multicolumn{3}{c}{Integral model  $\sA_0$}\\
      \hline
      $Z_1$ &$=$& $Z(x,b)$ & $\sZ_1$ &$=$& $\sZ_{11}$\\
      $C_1$ &$=$& $Z(x,y,a,b)$ & $\sC_1$ &$=$& $\sZ_{00} \cap \sZ_{01} \cap  \sZ_{10} \cap \sZ_{11}$\\
      $Z_3$ &$=$& $Z(x,bc)$ & $\sZ_3$ &$=$& $\sZ_{11} \cup (\sZ_{01} \cap \sZ_{10})$\\
      $C_3$ &$=$& $Z(x,bc)$ & $\sC_3$ &$=$& $\sZ_{11} \cup (\sZ_{01} \cap \sZ_{10})$\\
      $Z_4$ &$=$& $Z(x,b)$ & $\sZ_4$ &$=$& $\sZ_{11}$\\
      $C_4$ &$=$& $Z(x,y,b,c)$ & $\sC_4$ &$=$& $\overline{(\sZ_{01} \cap \sZ_{10} \cap \sZ_{11}) \setminus \sZ_{00}}$
     \end{tabular}
     \captionof{table}{Subschemes of $U_0$ and $\sA_0$}
    \end{center}
    The subschemes on the left \'{e}tale locally correspond to those on the right.
    \begin{proposition}
     \label{subschemes of sA_0 as KR-strata}
     Writing each subscheme in the table above in terms of KR-strata, we have the following.
     \begin{center}
      \begin{tabular}{ccccl}
       $\sZ_{11}$ &=& $\overline{\sA_{s_2s_1s_2\tau}}$\\
       $\sZ_{01} \cap \sZ_{10}$ &=& $\overline{\sA_{s_0s_2\tau} \cup \sA_{s_1\tau}}$\\
       $\sZ_{00} \cap \sZ_{01} \cap \sZ_{10} \cap \sZ_{11}$ &=& $\overline{\sA_{s_1\tau}}$\\
       $\overline{(\sZ_{01} \cap \sZ_{10} \cap \sZ_{11}) \setminus \sZ_{00}}$ &=& $\overline{\sA_{s_2\tau}}$
      \end{tabular}
     \end{center}
    \end{proposition}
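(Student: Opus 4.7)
The plan is to deduce all four equalities from a direct computation on the local model. The key tool is that, by the local model diagram of Theorem \ref{thm local model diagram} applied to the symplectic case, the KR-stratification of $\sA_0 \otimes \overline{\F}_p$ is pulled back from the Schubert stratification on $\Mloc \otimes \overline{\F}_p \hookrightarrow \sF_\Sp$ along the roof $V$. Thus for every $x \in \Adm(\mu)$ there is a locally closed reduced subscheme $S_x \subset U_0 \otimes \overline{\F}_p$ étale locally corresponding to $\sA_x$, with $\overline{S_x}$ étale locally corresponding to $\overline{\sA_x}$ and with $S_x \subset \overline{S_y}$ iff $x \le y$ in the Bruhat order. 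Since étale local correspondence commutes with finite intersections, unions, closures, and with the operation $\overline{(-)\setminus(-)}^\red$, it is enough to check each of the four equalities as an equality of reduced closed subschemes of $U_0 \otimes \overline{\F}_p$.

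First I would make the correspondence between the four top-dimensional components $Z_{ij}$ and the four extreme alcoves explicit, singling out $Z_{11} = Z(x, b)$ as the one associated to $s_2 s_1 s_2 \tau$. This can be done either by tracking the row/column structure from the presentation in Section \ref{sect local model of sAGL_1} together with the $\sG$-action, or more directly by comparing the tangent cone of $Z_{11}$ at the worst point with the Schubert cell of $s_2 s_1 s_2\tau$. That pins down the first equality. For the remaining three, I would carry out the computations in the coordinate ring $\overline{B} = \overline{\F}_p[x,y,a,b,c]/(xy,\,ax+by+abc)$:
\begin{align*}
 (Z_{01} \cap Z_{10})^{\red} &= Z(x,y,ac,bc)^{\red} = Z(x,y,c) \cup Z(x,y,a,b), \\
 Z_{00} \cap Z_{01} \cap Z_{10} \cap Z_{11} &= Z(x,y,a,b), \\
 \overline{(Z_{01} \cap Z_{10} \cap Z_{11}) \setminus Z_{00}} &= Z(x,y,b,c),
\end{align*}
the last coming from the decomposition $Z_{01}\cap Z_{10}\cap Z_{11} = Z(x,y,b,a)\cup Z(x,y,b,c)$ and the observation that the first piece lies in $Z_{00}=Z(y,a)$ while the second meets $Z_{00}$ only at the origin.

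To match these with the asserted closures of KR-strata, I would compare dimensions against lengths in $W^\aff$: $\ell(s_0s_2) = 2$, $\ell(s_1) = \ell(s_2) = 1$, $\ell(s_2s_1s_2)=3$, matching the dimensions of $Z(x,y,c)$, $Z(x,y,a,b)$, $Z(x,y,b,c)$, and $Z_{11}$ respectively. Irreducibility and reducedness of each piece then forces the identification once one knows which stratum is intended; crucially, $s_1 \not\le s_0 s_2$ in the Bruhat order, so $\overline{\sA_{s_1\tau}}$ is not absorbed into $\overline{\sA_{s_0s_2\tau}}$, explaining the union in the second equality. The matches $\overline{S_{s_1\tau}} = Z(x,y,a,b)$ and $\overline{S_{s_2\tau}} = Z(x,y,b,c)$ are consistent with the Bruhat-order inclusions read off from Figure \ref{gsp4 mu-permissible set}, namely that $s_1\tau$ lies below all four extreme alcoves while $s_2\tau$ lies below exactly three of them (all but the one corresponding to $Z_{00}$).

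The main obstacle will be the initial identification of $\sZ_{11}$ with $\overline{\sA_{s_2s_1s_2\tau}}$ (and in parallel, the identifications of $\overline{\sA_{s_0s_2\tau}}$, $\overline{\sA_{s_1\tau}}$, $\overline{\sA_{s_2\tau}}$ with the specific coordinate subschemes computed above). Once these correspondences between particular alcoves and particular coordinate loci are fixed, the four equalities follow from the displayed ideal computations together with dimension/Bruhat-order bookkeeping, both of which are essentially mechanical.
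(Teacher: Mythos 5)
Your overall strategy — pull the Schubert stratification back along the local model diagram, reduce to reduced subschemes of $U_0 \otimes \overline{\F}_p$, compute intersections in coordinates, and match by dimension and Bruhat order — is sound and agrees in spirit with the paper's proof. Your coordinate computations of $(Z_{01}\cap Z_{10})^\red$, the four-fold intersection, and $\overline{(Z_{01}\cap Z_{10}\cap Z_{11})\setminus Z_{00}}$ are correct, and your derivation of the last two equalities from the first two plus the decompositions is essentially the same bookkeeping the paper does.

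However, there is a genuine gap at exactly the point you flag as the ``main obstacle.'' You defer the identification $\sZ_{11} = \overline{\sA_{s_2s_1s_2\tau}}$ (and the parallel identifications of the length-one and length-two strata with the specific coordinate loci), suggesting one could use ``row/column structure,'' the $\sG$-action, or tangent cones, but you do not carry any of these out. This is not a routine step: without it, the dimension-plus-Bruhat argument alone does not determine the answer. There are several length-$1$ alcoves ($s_0\tau, s_1\tau, s_2\tau$) and several length-$2$ alcoves in $\Adm(\mu)$, and which one your $Z(x,y,a,b)$ or $Z(x,y,c)$ corresponds to cannot be read off from dimensions and the fact that $s_1 \not\le s_0s_2$; it depends on knowing which of the four $Z_{ij}$ is which extreme alcove. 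The paper supplies this missing piece concretely: it picks the closed point $x=a=b=c=0,\ y=1$ on $Z_{11}$, writes out the associated lattice chain, and exhibits an explicit Iwahori element $b$ with $b\cdot s_2s_1s_2\tau$ producing that chain, thereby nailing down $\sZ_{11} = \overline{\sA_{s_2s_1s_2\tau}}$. It then deduces $\sZ_{00}=\overline{\sA_{s_0s_1s_0\tau}}$ from the dimension of $Z_{00}\cap Z_{11}$ and the structure of the admissible set, which is what makes the later Bruhat bookkeeping unambiguous.

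One more difference worth noting: for the second equality the paper does not decompose $Z_{01}\cap Z_{10}$ into irreducible components at all. It instead observes that $Z_{01}\cap Z_{10} = Z(q_0,q_1,q_0^*,q_1^*)$ is precisely the locus where all four $G_i$, $G_i^*$ are infinitesimal, i.e.\ the $p$-rank zero locus, and quotes \cite[Proposition 2.7]{GYu2} that this locus equals $\overline{\sA_{s_0s_2\tau}\cup\sA_{s_1\tau}}$. This sidesteps the alcove identification for those two strata entirely; your coordinate decomposition $Z(x,y,c)\cup Z(x,y,a,b)$ is correct but, as above, would still need the extreme-alcove identifications to be turned into a proof.
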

    \begin{proof}
     The locus of $U_0$ corresponding to the $p$-rank zero locus of $\sA_0$ is given by $Z(q_0,q_1,q_0^*,q_1^*) = Z_{01} \cap Z_{10}$. Hence $\sZ_{01} \cap \sZ_{10}$ is the $p$-rank zero locus of $\sA_0$ and this is given by $\overline{\sA_{s_0s_2\tau} \cup \sA_{s_1\tau}}$ \cite[Proposition 2.7]{GYu2}.
 
     To show $\sZ_{11} = \overline{\sA_{s_2s_1s_2\tau}}$, we choose the closed point $x = a = b = c = 0$ and $y = 1$ of $U_0$ which lies solely on the irreducible component $Z_{11}$ of the special fiber. This point corresponds to the flag
      $$\sF_0 = \left(\begin{smallmatrix}1 & 0\\0 & 1\\1 & 0\\0 &1\end{smallmatrix}\right), \quad \sF_1 = \left(\begin{smallmatrix}0 & 0\\1 & 0\\0 & 1\\1 & 0\end{smallmatrix}\right), \quad \sF_2 = \left(\begin{smallmatrix}0 & 0\\0 & 0\\1 & 0\\0 & 1\end{smallmatrix}\right), \quad \sF_3 = \left(\begin{smallmatrix}0 & 1\\0 & 0\\0 & 1\\1 & 0\end{smallmatrix}\right)$$
     and in the notation of \cite[Section 4]{GortzGL} (cf. \cite[Section 4.3]{GortzComputing}) this gives the associated lattice chain
      $$\left(\begin{smallmatrix}1 \\&\pi&&1\\1&&\pi\\&&&1\end{smallmatrix}\right), \quad \left(\begin{smallmatrix}1\\&1\\&&1\\&1&&\pi\end{smallmatrix}\right), \quad \left(\begin{smallmatrix}1\\&1\\&&1\\&&&1\end{smallmatrix}\right), \quad \left(\begin{smallmatrix}\pi^{-1}\\&1\\\pi^{-1}&&1\\&&&1\end{smallmatrix}\right).$$
     Our chosen point corresponds to closed points lying on $\sA_{s_2s_1s_2\tau}$ if and only if there is an element $b$ in the Iwahori subgroup such that $b \cdot s_2s_1s_2\tau$ gives the same lattice chain as above. With $s_2s_1s_2\tau$ corresponding to the lattice chain 
      $$\left(\begin{smallmatrix}\pi\\&\pi\\&&1\\&&&1\end{smallmatrix}\right), \quad \left(\begin{smallmatrix}1\\&\pi\\&&1\\&&&1\end{smallmatrix}\right), \quad \left(\begin{smallmatrix}1\\&1\\&&1\\&&&1\end{smallmatrix}\right), \quad \left(\begin{smallmatrix}1\\&1\\&&\pi^{-1}\\&&&1\end{smallmatrix}\right)$$
     it is easy to check that 
      $$b = \left(\begin{smallmatrix}1&&1\\&1&&1\\&&1\\&&&1\end{smallmatrix}\right)$$
     suffices. Therefore $\sZ_{11} = \overline{\sA_{s_2s_1s_2\tau}}$. Since $Z_{00} \cap Z_{11}$, and hence $\sZ_{00} \cap \sZ_{11}$, is the union of two dimensional and one dimensional components, from inspection of the $\mu$-admissible set it must be that $\sZ_{00} = \overline{\sA_{s_0s_1s_0\tau}}$. That $\sZ_{00} \cap \sZ_{01} \cap \sZ_{10} \cap \sZ_{11}=\sA_\tau \cup \sA_{s_1\tau}$ is now immediate. Finally $\sZ_{01} \cap \sZ_{10} \cap \sZ_{11} = \sA_\tau \cup \sA_{s_1\tau} \cup \sA_{s_2\tau}$ and thus we have $(\sZ_{01} \cap \sZ_{10} \cap \sZ_{11}) \setminus \sZ_{00} = \sA_{s_2\tau}$.
    \end{proof}
    \begin{proposition}
     \label{Connected components A_0}
     The number of connected and irreducible components of the subschemes of $\sA_0$ are as follows.
     \begin{center}
      \begin{tabular}{c|c|c}
       Subscheme of $\sA_0$ & \# connected & \# irreducible\\
       \hline
       $\sZ_1$ & 1 & 1\\
       $\sC_1$ & $n_{s_1\tau}$ & $n_{s_1\tau}$\\
       $\sZ_3$ & 1 & $1+n_{s_0s_2\tau}$\\
       $\sC_3$ & 1 & $1+n_{s_0s_2\tau}$\\
       $\sZ_4$ & 1 & 1\\
       $\sC_4$ & $n_{s_2\tau}$ & $n_{s_2\tau}$\\
      \end{tabular}
      \captionof{table}{Subschemes of $\sA_0$}
     \end{center}
    \end{proposition}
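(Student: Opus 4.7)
My plan is to reduce each count to the corresponding statement about KR-strata and then use the identifications from the previous proposition together with smoothness of KR-strata and the stratification-by-Bruhat-order.

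First I would set up the following general observations. Each KR-stratum $\sA_x$ is smooth of dimension $\ell(x)$, so its connected components are irreducible; by hypothesis there are $n_x$ of them. The closure satisfies $\overline{\sA_x}=\bigsqcup_{y\leq x}\sA_y$, so the irreducible components of $\overline{\sA_x}$ are exactly the closures of the connected components of $\sA_x$, giving $n_x$ irreducible components. More generally, for a union $\overline{\bigcup_i \sA_{x_i}}$ where the $x_i$ are the maximal elements (with respect to the Bruhat order) of the set of alcoves appearing, the number of irreducible components is $\sum_i n_{x_i}$.

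Applying this to the four entries identified in the previous proposition: $\sZ_1=\sZ_4=\overline{\sA_{s_2 s_1 s_2 \tau}}$ has $n_{s_2 s_1 s_2\tau}$ irreducible components, $\sC_1=\overline{\sA_{s_1\tau}}$ has $n_{s_1\tau}$, $\sC_4=\overline{\sA_{s_2\tau}}$ has $n_{s_2\tau}$, and $\sZ_3=\sC_3$ has maximal strata $s_2 s_1 s_2\tau$ and $s_0 s_2\tau$, giving $n_{s_2 s_1 s_2\tau}+n_{s_0 s_2\tau}$ components. To get the table I then need $n_{s_2 s_1 s_2\tau}=1$; this holds because $\sZ_{11}$ is one of the four irreducible components of $\sA_0\otimes\overline{\F}_p$ of de Jong \cite{YuSiegel}, and since we are working in a single connected component of $\sA_0\otimes\sO_{\breve E}$, no further splitting occurs. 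This immediately gives the irreducibility of $\sZ_1$ and $\sZ_4$ and the right count for $\sZ_3=\sC_3$.

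For the connectedness counts I would argue case by case using the \'{e}tale local descriptions of Step~0. Since $\sZ_1=\sZ_4$ is irreducible it is connected. For $\sC_1$, the local model presentation gives $C_1\cong\Spec(\sO_{\breve E}[c])$, which is one-dimensional and local-irreducible; hence at each of the $n_\tau$ points of $\sA_\tau$ exactly one irreducible component of $\sC_1$ passes through, so the $n_{s_1\tau}$ irreducible components are pairwise disjoint and $\sC_1$ has $n_{s_1\tau}$ connected components. The case of $\sC_4$ is entirely analogous using $C_4\cong\Spec(\sO_{\breve E}[a])$. For $\sZ_3=\sC_3$, the connectedness reduces to showing that $\sZ_{11}$ and $\overline{\sA_{s_0 s_2\tau}\cup\sA_{s_1\tau}}$ are each connected and that they meet: the former is $\sZ_{11}$ itself, already shown connected; the latter is connected because every maximal stratum closure contains $\sA_{s_1\tau}$ and $\sA_\tau$; finally they meet since both contain $\sA_\tau$. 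Hence $\sZ_3$ is connected, giving the final entries of the table.

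The main obstacle, and the step requiring the most care, is the claim that in $\sC_1$ (respectively $\sC_4$) different connected components of $\sA_{s_1\tau}$ (resp.~$\sA_{s_2\tau}$) do not merge when passing to the closure. I would handle this by exploiting the \'{e}tale local correspondence: since $C_1$ and $C_4$ are each globally-irreducible (indeed smooth) affine lines in $U_0$, and since the worst point of $\sA_0$ is covered by such \'{e}tale charts at every point of $\sA_\tau$, the closure of each connected component of $\sA_{s_1\tau}$ picks up exactly one point of $\sA_\tau$. This forces the connected-component count to match the irreducible-component count, completing the proof.
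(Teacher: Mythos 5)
Your overall strategy --- identify each subscheme with a union of KR-stratum closures via Proposition \ref{subschemes of sA_0 as KR-strata}, count irreducible components as closures of connected components of the maximal strata, and use smoothness read off from the local model to match connected with irreducible components for $\sC_1$ and $\sC_4$ --- is essentially the paper's. The entries for $\sZ_1$, $\sZ_4$, $\sC_1$, $\sC_4$, and the irreducible-component count for $\sZ_3 = \sC_3$ are argued correctly, modulo a harmless slip: since $xy = p$ in $B$, the subschemes $C_1 = Z(x,y,a,b)$ and $C_4 = Z(x,y,b,c)$ lie in the special fiber and are isomorphic to $\A^1_{\overline{\F}_p}$, not to $\Spec(\sO_{\breve E}[c])$; this does not affect the smoothness and unibranch arguments you draw from them.

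The one genuine gap is in your connectedness argument for $\sZ_3 = \sZ_{11} \cup (\sZ_{01}\cap\sZ_{10})$. You claim that $\overline{\sA_{s_0s_2\tau}\cup\sA_{s_1\tau}} = \sZ_{01}\cap\sZ_{10}$ is itself connected ``because every maximal stratum closure contains $\sA_{s_1\tau}$ and $\sA_\tau$.'' This justification fails: $s_1\tau \not\leq s_0s_2\tau$, so $\overline{\sA_{s_0s_2\tau}}$ does not contain $\sA_{s_1\tau}$, and the closure of an individual connected component of a stratum need only meet $\sA_\tau$ in some closed point rather than contain all of $\sA_\tau$. Since $\sA_\tau$ may consist of several points, distinct components of the $p$-rank zero locus could close up onto distinct points of $\sA_\tau$, so connectedness of $\sZ_{01}\cap\sZ_{10}$ is neither established by your argument nor needed for the proposition. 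The correct, weaker statement --- and the one the paper proves --- is that every connected component of $\sZ_{01}\cap\sZ_{10}$ meets $\sZ_{11}$: such a component contains a connected component $\sW$ of some KR-stratum, $\overline{\sW}\cap\sA_\tau \neq \emptyset$ by \cite[Theorem 6.4]{GYu2}, and $\sA_\tau \subset \sZ_{11}$ with $\sZ_{11}$ connected. Note that this input about closures of connected components of individual strata is a genuine theorem and not a formal consequence of the closure relations $S_x \subset \overline{S_y}$ for whole strata; with it in place the rest of your argument goes through.
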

    \begin{proof}
     To show that $\sZ_3 = \sZ_{11} \cup (\sZ_{01} \cap \sZ_{10})$ is connected, it suffices to show that each connected component of $\sZ_{01} \cap \sZ_{10}$ meets $\sZ_{11}$. Let $\sW$ be such a connected component. Since $\sZ_{01} \cap \sZ_{10}$ is a union of KR-strata, by possibly shrinking $\sW$ we may assume $\sW$ is a connected component of some KR-stratum. Then $\overline{\sW} \cap \sA_{\tau} \neq \emptyset$ \cite[Theorem 6.4]{GYu2}, where $\overline{\sW}$ is the Zariski closure of $\sW$ inside of $\sA_0$. As $\sA_{\tau} \subset \sZ_{11}$, the claim follows.

     From Proposition \ref{subschemes of sA_0 as KR-strata}, $\sZ_{11} \cup (\sZ_{01} \cap \sZ_{10})$ is a union of three and two dimensional irreducible components. The unique three dimensional component is $\sZ_{11}$ and the two dimensional components are given by the irreducible components of $\overline{(\sZ_{01} \cap \sZ_{10}) \setminus \sZ_{11}} = \overline{\sA_{s_0s_2\tau}}$. As $\overline{(Z_{01} \cap Z_{10}) \setminus Z_{11}}$ corresponds to the ideal $(x,y,c)$ of $B$, we have that $\overline{\sZ_{01} \cap \sZ_{10} \setminus \sZ_{11}}$ is smooth. Thus each of the $n_{s_0s_2\tau}$ connected components of $\overline{\sA_{s_0s_2\tau}}$ is irreducible. The result now follows.
     
     The rest of the proposition is clear.
    \end{proof}
   
   \subsubsection{Description of the integral model $\sA_1$}
    \begin{proposition}
     \label{IrredInA1}
     $\sA_1 \otimes \overline{\F}_p$ is connected, equidimensional of dimension three, and the irreducible components are normal. Furthermore, $\sA_1 \otimes \overline{\F}_p$ and has precisely four irreducible components given by $\pi^{-1}(\sZ_{ij})^\red$, where $\pi : \sA_1 \rightarrow \sA_0$.
    \end{proposition}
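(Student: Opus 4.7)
The plan is to leverage the explicit étale local model $U_1$ from Section 5.1.1. From the equations defining $U_1 \otimes \overline{\F}_p$, one reads off four irreducible components corresponding to the prime ideals $(u,v,b)$, $(u, y+ac)$, $(y, v)$, $(y, a)$, each of dimension $3$ and each normal by Serre's criterion. Projecting away $u$ and $v$ shows that these map onto $Z_{11}$, $Z_{10}$, $Z_{01}$, $Z_{00}$ respectively, identifying them étale-locally with $\pi^{-1}(Z_{ij})^\red$.

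To transfer to $\sA_1$, I would first observe that $\pi : \sA_1 \to \sA_0$ is finite, since it is the base change of the relatively representable finite morphism $\sG_{OT}^\times \to OT$. Hence $\dim(\sA_1 \otimes \overline{\F}_p) = \dim(\sA_0 \otimes \overline{\F}_p) = 3$. The properties of being equidimensional of dimension three and having normal irreducible components are étale-local, so they pass from $U_1 \otimes \overline{\F}_p$ to $\sA_1 \otimes \overline{\F}_p$. By the étale-local description above, every irreducible component of $\sA_1 \otimes \overline{\F}_p$ is contained in some $\pi^{-1}(\sZ_{ij})^\red$, so it suffices to show that each of the four $\pi^{-1}(\sZ_{ij})^\red$ is itself irreducible.

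For irreducibility, I would use that $\pi^{-1}(\sZ_{ij})^\red$ is étale-locally irreducible by the local model, and combined with normality its connected components coincide with its irreducible components; thus irreducibility reduces to connectedness. For $\sZ_{11}$, this is immediate: the étale-local map $\pi^{-1}(Z_{11})^\red \to Z_{11}$ is an isomorphism (on the special fiber, $x = b = 0$ forces $u = v = 0$ after taking the reduced structure), so $\pi^{-1}(\sZ_{11})^\red \cong \sZ_{11}$ is irreducible. For $\sZ_{00}, \sZ_{01}, \sZ_{10}$, I would identify the preimage in the local model explicitly, e.g. as the integral hypersurface $V(v^{p-1} - u^{p-1} - bc)$ for $Z_{00}$, verify its irreducibility directly (the polynomial is not a $(p-1)$-th power in the UFD $\overline{\F}_p[b,c,u]$), and transfer this irreducibility to the global cover via étale descent, combined with the fact that there is only one étale-local branch at each point.

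Finally, for connectedness of $\sA_1 \otimes \overline{\F}_p$, the four irreducible components $\pi^{-1}(\sZ_{ij})^\red$ cover $\sA_1 \otimes \overline{\F}_p$ since the $\sZ_{ij}$ cover $\sA_0 \otimes \overline{\F}_p$, and they all meet along $\pi^{-1}(\sC_1)^\red$, which is nonempty because $\sC_1 \supset \sA_\tau$ by Proposition \ref{subschemes of sA_0 as KR-strata}. Thus the union is connected. The main obstacle lies in the irreducibility check for $\pi^{-1}(\sZ_{00})^\red$, $\pi^{-1}(\sZ_{01})^\red$, and $\pi^{-1}(\sZ_{10})^\red$, where $\pi$ is a ramified cover of degree $p-1$ or $(p-1)^2$: one must exclude the possibility that the monodromy is intransitive and the cover decomposes globally, which is precisely where the explicit presentation of $U_1$ as cut out by a single irreducible polynomial above each such component does the work.
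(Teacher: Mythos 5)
Your reduction of the problem is sound up to one step: normality and unibranchness of the components of $U_1\otimes\overline{\F}_p$ do reduce everything to the \emph{connectedness} of each $\pi^{-1}(\sZ_{ij})^\red$, and the case of $\sZ_{11}$ is handled correctly. The gap is in how you dispose of $\pi^{-1}(\sZ_{00})^\red$, $\pi^{-1}(\sZ_{01})^\red$, $\pi^{-1}(\sZ_{10})^\red$. You correctly identify that one must rule out the generically \'etale cover of degree $p-1$ or $(p-1)^2$ decomposing globally, but then assert that ``the explicit presentation of $U_1$ as cut out by a single irreducible polynomial above each such component does the work.'' It cannot: an \'etale local model only determines henselian local rings, and a disconnected finite \'etale cover of an irreducible base is unibranch at every point while being globally reducible. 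The irreducibility of $v^{p-1}-u^{p-1}-bc$ over $\overline{\F}_p[u,v,b,c]$ says nothing about the monodromy of the actual cover of $\sZ_{00}$, because the functions $q_i^*$ on $\sA_0$ are only \'etale-locally identified with coordinates of $U_1$ (and only up to units and choices of trivialization); there is no global function on $\sZ_{00}$ whose $(p-1)$-th roots cut out the cover. So the decisive global input is missing. Note also that your final connectedness argument for $\sA_1\otimes\overline{\F}_p$ (``the four components all meet along $\pi^{-1}(\sC_1)^\red$'') already presupposes that each $\pi^{-1}(\sZ_{ij})^\red$ is connected, which is precisely the unproven step.

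The paper closes this gap with a pointwise device you should adopt: take a closed point $x$ of the zero-dimensional stratum $\sA_\tau$. Since $\sA_\tau$ lies in the $p$-rank zero locus, all the $G_i$ at $x$ are infinitesimal and $\pi^{-1}(x)$ is a \emph{single} closed point. By \cite[Theorem 6.4]{GYu2} every irreducible component of $\sA_0\otimes\overline{\F}_p$ contains $\sA_\tau$, and since $\pi$ is finite and surjective every irreducible component of $\sA_1\otimes\overline{\F}_p$ surjects onto some $\sZ_{ij}$, hence passes through the single point $\pi^{-1}(x)$. This gives connectedness immediately, and then the normality (hence analytic irreducibility after completion, using excellence) of the four components of $U_1\otimes\overline{\F}_p$ bounds the number of branches at that point by four; surjectivity onto $\sA_0\otimes\overline{\F}_p$ forces exactly four, which must then be the $\pi^{-1}(\sZ_{ij})^\red$. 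The same argument applied to a fixed $ij$ shows each $\pi^{-1}(\sZ_{ij})^\red$ is irreducible, repairing your step without any monodromy computation.
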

    \begin{proof}
     That $\sA_1 \otimes \overline{\F}_p$ is equidimensional of dimension three is immediate from inspection of the local model $U_1 \otimes \overline{\F}_p$. Let $x \in \sA_\tau \subset \sA_0$ be any closed point and consider $\pi^{-1}(x)$. With $\pi$ finite and surjective, each irreducible component of $\sA_1 \otimes \overline{\F}_p$ maps surjectively onto an irreducible component of $\sA_0 \otimes \overline{\F}_p$. Furthermore, every irreducible component of $\sA_0 \otimes \overline{\F}_p$ contains $\sA_\tau$ \cite[Theorem 6.4]{GYu2}. Thus $\pi^{-1}(x)$ meets every irreducible component of $\sA_1 \otimes \overline{\F}_p$. But $\sA_\tau$ is in the $p$-rank zero locus of $\sA_0$ (see Figure \ref{gsp4 mu-permissible set} above), and so $\pi^{-1}(x)$ consists of a single closed point. Therefore $\sA_1 \otimes \overline{\F}_p$ is connected.
     
     Let $\sA_1 \xleftarrow{\varphi} V \xrightarrow{\psi} U_1$ be an \'{e}tale cover of $\sA_1$ with \'{e}tale morphism to $U_1$. Choose a closed point $v$ of $V$ such that $\varphi(v) = \pi^{-1}(x)$ and set $y = \psi(v)$. Since the irreducible components of $U_1 \otimes \overline{\F}_p$ are integral, normal, and excellent, the completion at $y$ of any irreducible component that $y$ lies on is also integral and normal. Therefore there are at most four irreducible components of $V$ passing through $v$. Thus the number of irreducible components of $\sA_1 \otimes \overline{\F}_p$ passing through $\pi^{-1}(x)$ is at most four, and all irreducible components of $\sA_1 \otimes \overline{\F}_p$ pass through $\pi^{-1}(x)$. Since $\sA_1 \otimes \overline{\F}_p \rightarrow \sA_0 \otimes \overline{\F}_p$ is surjective, there must be precisely four. Therefore they are given by $\pi^{-1}(\sZ_{ij})^\red$.
    \end{proof}

  \subsection{Step I: Semi-stable resolution of $\sA_0$ \cite{dJTalk}}
   Set $\sA_0' = \Bl_{\sZ_1}(\sA_0)$ and $U_0' = \Bl_{Z_1}(U_0)$.
   \subsubsection{Description of $U_0'$}
    \label{Step I description of local model}
    An easy calculation shows that $U_0' = \Proj_{U_0}\left(B[\widetilde{x},\widetilde{b}]/(a\widetilde{x} + \widetilde{b}y+a\widetilde{b}c, x\widetilde{b}-\widetilde{x}b)\right)$
    where $\widetilde{x}$ and $\widetilde{b}$ are of grade one. The true center of $\rho_U' : U_0' \rightarrow U_0$ is given by $C_1 = Z(x,y,a,b)$ and is of dimension one, the true center of $\rho_U'$ is equal to its fundamental center, the exceptional locus of $\rho_U'$ is two dimensional, and $U_0' \otimes \overline{\F}_p$ is equidimensional of dimension three. The strict transforms of the closed subschemes given in Step 0 are
     $$Z_{3}' = C_{3}' = Z(x,b) \cup Z(x,y,c,\widetilde{x}), \quad Z_4' = Z(x,b), \quad C_4' = Z(x,y,b,c).$$
    \vspace{-\baselineskip}\vspace{-\baselineskip}

   \subsubsection{Description of $\sA_0'$}
    With $U_0'$ an \'{e}tale local model of $\sA_0'$, the true center of $\sA_0' \rightarrow \sA_0$ is $\sC_1$ since it \'{e}tale locally corresponds to $C_1$. By the remarks in the previous section, $\sA_0' \otimes \overline{\F}_p$ is equidimensional of dimension three and the exceptional locus of $\sA_0' \rightarrow \sA_0$ is two dimensional. Thus no irreducible component of $\sA_0' \otimes \overline{\F}_p$ is contained in the exceptional locus. Therefore $\sA_0' \otimes \overline{\F}_p$ has four irreducible components, each being given by the strict transform of an irreducible component of $\sA_0 \otimes \overline{\F}_p$. We denote the strict transform of $\sZ_{ij}$ by $\sZ_{ij}'$.

    \begin{proposition}
     \label{Connected components A_0'}
     The number of connected and irreducible components of the subschemes of $\sA_0'$ are as follows.
     \begin{center}
      \begin{tabular}{c|c|c}
       Subscheme of $\sA_0'$ & \# connected & \# irreducible\\
       \hline
       $\sZ_3' = \ST(\sZ_3)$ & 1 & $n_{s_0s_2\tau}+1$\\
       $\sC_3' = \ST(\sC_3)$ & 1 & $n_{s_0s_2\tau}+1$\\
       $\sZ_4' = \ST(\sZ_4)$ & 1 & $1$\\
       $\sC_4' = \ST(\sC_4)$ & $n_{s_2\tau}$ & $n_{s_2\tau}$\\
      \end{tabular}
      \captionof{table}{Subschemes of $\sA_0'$}
     \end{center}
    \end{proposition}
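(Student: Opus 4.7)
The approach exploits that $\rho: \sA_0' \to \sA_0$ is an isomorphism outside $\rho^{-1}(\sC_1)$ and that $\dim \sC_1 = 1$ while every irreducible component of $\sZ_3, \sC_3, \sZ_4, \sC_4$ has dimension at least $2$; in particular no such component is contained in $\sC_1$. It follows that the strict transform $\ST(\cdot)$ gives a bijection between irreducible components of $\sW$ and of $\ST(\sW)$ for each subscheme $\sW$ in the table, preserving irreducibility. The irreducible component counts listed in the table then follow directly from Proposition \ref{Connected components A_0}.

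For the connected components, two cases are immediate. $\sZ_4' = \ST(\sZ_{11})$ is irreducible by the bijection above, hence connected. For $\sC_4'$, the $n_{s_2\tau}$ connected components of $\sC_4$ are pairwise disjoint closed subschemes of $\sA_0$; their set-theoretic preimages under $\rho$ are therefore pairwise disjoint, and so are the strict transforms which sit inside them. Each such strict transform is irreducible (hence connected), giving the claimed $n_{s_2\tau}$ connected components of $\sC_4'$.

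The only nontrivial case is $\sZ_3' = \sC_3' = \ST(\sZ_3)$, which must be shown connected despite having $1 + n_{s_0s_2\tau}$ irreducible components. The plan is to establish the stronger fact that $\sZ_4' = \ST(\sZ_{11})$ contains the entire exceptional locus $E = \rho^{-1}(\sC_1)^{\red}$. Granted this, every other irreducible component of $\sZ_3'$ is the strict transform of a connected component of $\overline{\sA_{s_0s_2\tau}}$; the closure in $\sA_0$ of any such component meets $\sC_1$ (at least at a point of $\sA_\tau \subset \sC_1 = \overline{\sA_{s_1\tau}}$), so its strict transform meets $E$, and hence meets $\sZ_4'$, yielding connectedness. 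The main obstacle is the inclusion $E \subset \sZ_4'$: this is not formal because $\sZ_4 = \sZ_1$ is itself the center of the blowup, an unusual situation in which the ``strict transform'' of $\sZ_4$ coincides with its total transform. I would verify it by direct computation in the two standard charts ($\widetilde{x} = 1$ and $\widetilde{b} = 1$) of the Proj presentation of $U_0'$ given in Section \ref{Step I description of local model}: in each chart $Z(x,b)$ becomes the exceptional Cartier divisor of the blowup, is irreducible, and visibly contains the set-theoretic preimage of the true center $C_1$. The inclusion $\sZ_4' \supset E$ on $\sA_0'$ then follows by passing through the \'{e}tale local model.
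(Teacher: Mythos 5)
Your proposal is correct, and for the counting statements and for $\sZ_4'$, $\sC_4'$ it runs along essentially the same lines as the paper: no irreducible component of the relevant subschemes lies in the true center $\sC_1$, so strict transform preserves the component counts, and disjointness/irreducibility handles $\sC_4'$. Where you genuinely diverge is the one nontrivial point, the connectedness of $\sZ_3'$. The paper works \emph{downstairs}: using the KR-stratum decompositions $\sZ_{11} = \sA_\tau \cup \sA_{s_1\tau} \cup \sA_{s_2\tau} \cup \sA_{s_2s_1\tau} \cup \sA_{s_2s_1s_2\tau}$, $\sW = \sA_\tau \cup \sA_{s_0\tau} \cup \sA_{s_2\tau} \cup \sA_{s_0s_2\tau}$, and $\sC_1 = \sA_\tau \cup \sA_{s_1\tau}$, it observes that the one-dimensional overlap $\sZ_{11} \cap \sW$ meets $\sC_1$ only in the zero-dimensional $\sA_\tau$, so that $\sZ_3 \setminus \sC_1$ is still connected, and then takes closures of preimages. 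You instead reconnect the pieces \emph{upstairs} through the exceptional locus, via the inclusion $E \subset \ST(\sZ_{11})$, which holds precisely because the blowup center is $\sZ_{11}$ itself; your chart computation ($Z(x,b)$ becomes $Z(x)$ resp.\ $Z(b)$, irreducible and containing $Z(x,y,a)$ resp.\ $Z(y,a,b)$) is right and is in fact consistent with the paper's own formula $Z_4' = Z(x,b)$ in Section \ref{Step I description of local model}. Both arguments rest on the same external input (each connected component of $\overline{\sA_{s_0s_2\tau}}$ meets $\sA_\tau$, i.e.\ \cite[Theorem 6.4]{GYu2}); the paper's version avoids any computation on the blowup but needs the finer observation that the overlaps survive deleting $\sC_1$, whereas yours needs only that each two-dimensional component meets $\sC_1$ somewhere, at the cost of the explicit verification that $E \subset \sZ_4'$ — a fact that is also reusable later in the construction.
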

    \begin{proof}
     We start by showing $\sZ_3'$ is connected. By Proposition \ref{subschemes of sA_0 as KR-strata} and inspection of Figure \ref{gsp4 mu-permissible set}, $\sZ_3 = \sZ_{11} \cup (\sZ_{01} \cap \sZ_{10})$ is a union of three and two dimensional components intersecting in a one dimensional closed subscheme. Set $\sW = \overline{\sZ_{01} \cap \sZ_{10} \setminus \sZ_{11}}$, which is equidimensional of dimension two. Then $\sZ_3 = \sZ_{11} \cup \sW$ and from Proposition \ref{subschemes of sA_0 as KR-strata}, we can express the subschemes as the union of KR-strata as follows.
     \begin{align*}
      \sZ_{11} &= \sA_\tau \cup \sA_{s_1\tau} \cup \sA_{s_2\tau} \cup  \sA_{s_2s_1\tau} \cup \sA_{s_2s_1s_2\tau}\\
      \sW &= \sA_\tau \cup \sA_{s_0\tau} \cup \sA_{s_2\tau} \cup \sA_{s_0s_2\tau}\\
      \sC_1 &= \sA_\tau \cup \sA_{s_1\tau}
     \end{align*}
     Therefore the one dimensional subscheme $\sZ_{11} \cap \sW$ intersects with $\sC_1$ in $\sA_\tau$, a zero dimensional subscheme. With $\sZ_{11}$ and $\sW$ smooth, $\sZ_3 \setminus \sC_1$ is connected, and thus so is the strict transform of $\sZ_3$. Therefore $\sZ_3'$ is connected.

     The irreducible components of $\sZ_3$, $\sZ_4$, and $\sC_4$ are not contained in $\sC_1$, and so each of these three subschemes has the same number of irreducible components as their strict transform. With $\sZ_4$ of dimension 3, $\sZ_4 \setminus C_1$ is connected and hence so is $\sZ_4'$. Since $\sC_4$ is smooth, it follows that $\sC_4$ and $\sC_4'$ have the same number of connected components. With $\sC_3' = \sZ_3'$, all the claims have been shown.
    \end{proof}

  \subsection{Step II: Fiber with $\sA_1$}
   Set $\sA_1'' = \sA_1 \times_{\sA_0} \sA_0'$ and $U_1'' = U_1 \times_{U_0} U_0'$.
   \subsubsection{Description of $U_1''$}
    As in Theorem \ref{main theorem for GSp with Iw_1(p) level}, $U_1$ is given by adjoining the variables $u$ and $v$ along with the relations $u^{p-1} - x$ and $v^{p-1} - (x+bc)$ to $U_0$. Thus we have
     $$U_1'' = \Proj\left(B[u,v][\widetilde{x},\widetilde{b}]/(a\widetilde{x} + \widetilde{b}y+a\widetilde{b}c, x\widetilde{b}-\widetilde{x}b, u^{p-1} - x, v^{p-1}-(x+bc)\right)$$
    where $B[u,v]$ is of grade 0 and $\widetilde{x}$ and $\widetilde{b}$ are of grade 1.  The reduced inverse images under $U_1'' \rightarrow U_0'$ are give by
     $$Z_{3}'' = C_{3}'' = Z(u,v,b) \cup Z(u,v,c,\widetilde{x}), \quad Z_{4}'' = Z(u,v,b), \quad C_{4}'' = Z(u,v,b,c,\widetilde{x}).$$
    Note that $Z(u, v, b) \cup Z(u, v, c, \widetilde{x}) = Z(u, v)$ as the relation $v^{p-1}-u^{p-1}-bc$ implies that if $u$ and $v$ are zero, then $bc = 0$ giving the two components.
   \subsubsection{Description of $\sA_1''$}
    With $\sA_1'' = \sA_1 \times_{\sA_0} \sA_0'$, the projection $\sA_1'' \rightarrow \sA_1$ is proper and birational and so it is a modification. The projection $\rho_\sA'' : \sA_1'' \rightarrow \sA_0'$ is finite and flat. 
    \begin{proposition}
     Set $\sZ_{ij}'' = (\rho_\sA'')^{-1}(\sZ_{ij}')^\red$. Each $\sZ_{ij}''$ is an irreducible component of $\sA_1'' \otimes \overline{\F}_p$, and these give all the irreducible components of $\sA_1'' \otimes \overline{\F}_p$.
    \end{proposition}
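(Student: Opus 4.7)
The plan is to combine the finite flatness of $\rho_\sA'' : \sA_1'' \to \sA_0'$ with a local irreducibility analysis on the \'etale local model $U_1''$. As a base change of the finite flat morphism $\pi : \sA_1 \to \sA_0$, the projection $\rho_\sA''$ is finite and flat. Step I establishes that $\sA_0' \otimes \overline{\F}_p$ is equidimensional of dimension three with irreducible components $\sZ_{ij}'$. Hence $\sA_1'' \otimes \overline{\F}_p$ is equidimensional of dimension three, and every irreducible component $W$ of $\sA_1'' \otimes \overline{\F}_p$ maps under $\rho_\sA''$ onto some $\sZ_{ij}'$, so $W \subseteq (\rho_\sA'')^{-1}(\sZ_{ij}')^\red = \sZ_{ij}''$ with $\dim \sZ_{ij}'' = 3$. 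If each $\sZ_{ij}''$ is irreducible, then $W = \sZ_{ij}''$ and both halves of the proposition follow at once.

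To prove irreducibility of $\sZ_{ij}''$ I pass to the local model. The fiber-product description of $\sA_1''$ produces an \'etale local model diagram with local model $U_1''$, and $\sZ_{ij}''$ \'etale locally corresponds to $(\rho_U'')^{-1}(Z_{ij}')^\red$, where $Z_{ij}'$ is the strict transform of $Z_{ij}$ in $U_0'$. I would compute these reduced preimages chart by chart, using the presentation of $U_0'$ from Step I together with the Oort--Tate relations $u^{p-1} - x$ and $v^{p-1} - (x+bc)$ defining $U_1''$ over $U_0'$. On the components where $x$ (resp.\ $x+bc$) restricts identically to zero, the variable $u$ (resp.\ $v$) is nilpotent and drops out of the reduced ring; on components where these restrict to a nonzero, non-$(p-1)$-th-power element such as $bc$ (relevant for the strict transform of $Z_{10}$), the Kummer-type polynomial $v^{p-1} - bc$ remains irreducible over the function field of the component, since $b$ and $c$ are distinct primes in the relevant polynomial UFD. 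In each chart the resulting ring is a domain, and the two standard charts of the blow-up glue to a single irreducible reduced scheme. Global irreducibility of $\sZ_{ij}''$ then follows from this local information together with connectedness, which I would obtain as in Proposition \ref{IrredInA1}: a closed point of $\sZ_{ij}'$ lying over the $\sA_\tau$-stratum has singleton fiber under $\rho_\sA''$ (all the $G_i$ there are infinitesimal and admit a unique Oort--Tate generator), so any two irreducible pieces of $\sZ_{ij}''$ must meet above such a point.

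The main obstacle is the explicit case-by-case verification on $U_1''$, particularly in the chart where the exceptional coordinate $\widetilde{b}$ (or $\widetilde{x}$) is a unit: there one must simultaneously manipulate the blow-up relation $x\widetilde{b} - \widetilde{x}b$ and the Oort--Tate relations to identify the reduced preimage as the vanishing locus of a prime ideal. The argument that $v^{p-1} - bc$ remains irreducible over the appropriate base ring, while elementary, must be verified for every residue characteristic $p \geq 3$ via unique factorization; once this is in hand, the proposition follows by assembling the local data with the connectedness argument above.
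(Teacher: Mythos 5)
Your first paragraph matches the paper's framing: $\rho_\sA''$ is finite and flat, $\sA_1''\otimes\overline{\F}_p$ is equidimensional of dimension three, and everything reduces to showing each $\sZ_{ij}''$ is irreducible. For that key step, however, you take a genuinely different route from the paper. The paper does not compute anything on $U_1''$: it invokes Proposition \ref{IrredInA1}, which already established that $\sW_{ij}=\pi^{-1}(\sZ_{ij})^{\red}$ is irreducible in $\sA_1$, notes that $\sW_{ij}$ is not contained in the (at most one-dimensional) true center of $\sA_1''\rightarrow\sA_1$, so its strict transform $\sW_{ij}''$ is irreducible, and then uses flatness of $\rho_\sA''$ to write $\sZ_{ij}''=\overline{\sZ_{ij}''\cap\sU''}$ with $\sU''$ the locus over the complement of $\sC_1$, where $\sZ_{ij}''\cap\sU''=\sW_{ij}''\cap\sU''$ is an open subset of an irreducible set. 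This buys a short, computation-free proof that recycles the one place where the hard local analysis was already done ($U_1$, not $U_1''$). Your route instead redoes the analysis on the blown-up local model $U_1''$, which is more work but would, if completed, also yield the irreducibility of the components $Z_{ij}''$ of $U_1''\otimes\overline{\F}_p$ as a by-product.

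The gap in your route is the passage from irreducibility of $Z_{ij}''\subset U_1''$ to irreducibility of $\sZ_{ij}''\subset\sA_1''$. Exhibiting the reduced preimage in each chart as the zero locus of a prime ideal gives global irreducibility of the local-model subscheme, but that is not the input the gluing argument needs. Your connectedness step shows every irreducible component of $\sZ_{ij}''$ passes through the singleton fiber over a point of the $p$-rank zero locus; to conclude there is only one such component you must bound the number of components through that point by the number of \emph{analytic branches} of $Z_{ij}''$ at the corresponding point of $U_1''$, and an irreducible scheme can have several branches at a point (a nodal curve is cut out by a prime ideal but has two branches at the node). This is exactly why the proof of Proposition \ref{IrredInA1} explicitly verifies that the components of $U_1\otimes\overline{\F}_p$ are \emph{normal} (via Serre's criterion), so that excellence gives integrality of the completed local rings. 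Your sketch omits the analogous normality (or at least unibranchness) verification for the components of $U_1''\otimes\overline{\F}_p$ at the relevant point, and without it the final assembly "prime ideal in each chart $+$ connectedness $\Rightarrow$ irreducible" does not go through. The gap is fixable by adding that verification, but as written the argument is incomplete precisely at the step the paper's strict-transform trick is designed to avoid.
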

    \begin{proof}
     From Proposition \ref{IrredInA1} we have that $\sW_{ij} = \pi^{-1}(\sZ_{ij})^\red$ is irreducible, where $\pi : \sA_1 \rightarrow \sA_0$. The modification $\sA_1'' \rightarrow \sA_1$ has true center of dimension at most one, and thus $\sW_{ij}$ is not contained in the true center. Therefore its strict transform $\sW_{ij}''$ with respect to $\sA_1'' \rightarrow \sA_1$ is irreducible.
 
     Set $\sU = \sA_0 \setminus \sC_1$, $\sU' = (\rho_\sA')^{-1}(\sU)$, and $\sU'' = (\rho_\sA'')^{-1}(\sU')$. Then $\sZ_{ij}'' \cap \sU'' = \sW_{ij}'' \cap \sU''$ because both can be described as the reduced inverse image of $\sZ_{ij} \cap \sU$ under the two paths in the following cartesian diagram.
     \begin{equation*}
      \begin{tikzcd}
       \sA_1'' \arrow{r}{\rho_\sA''} \arrow{d} &
       \sA_0' \arrow{d}{\rho_\sA'} \\
       \sA_1 \arrow{r}{\pi} &
       \sA_0
      \end{tikzcd}
     \end{equation*}
     Noting that $\sZ_{ij}'$ is irreducible making $\sZ_{ij}' = \overline{\sZ_{ij}' \cap \sU'}$, as sets we have
     \begin{align*}
     	\sZ_{ij}'' &= (\rho_\sA'')^{-1}(\overline{\sZ_{ij}' \cap \sU'}) = \overline{(\rho_\sA'')^{-1}(\sZ_{ij}' \cap \sU')} = \overline{(\rho_\sA'')^{-1}(\sZ_{ij}') \cap (\rho_\sA'')^{-1}(\sU')}\\
     	&= \overline{\sZ_{ij}'' \cap \sU''}
     \end{align*}
     where the second equality follows since $\rho_\sA''$ is flat. It thus suffices to show that $\sZ_{ij}'' \cap \sU''$ is irreducible. But this is immediate since $\sZ_{ij}'' \cap \sU'' = \sW_{ij}'' \cap \sU''$ with $\sW_{ij}''$ irreducible. That the collection $\set{\sZ_{ij}''}$ gives all the irreducible components is immediate.
    \end{proof}

    \begin{proposition}
     \label{Connected components for sA_1''}
     The number of connected and irreducible components of the subschemes of $\sA_1''$ are as follows.
     \begin{center}
      \begin{tabular}{c|c|c}
       Subscheme of $\sA_1''$ & \# connected & \# irreducible\\
       \hline
       $\sZ_3'' = (\rho_\sA'')^{-1}(\sZ_3')^\red$ & 1 & $n_{s_0s_2\tau}+1$\\
       $\sC_3'' = (\rho_\sA'')^{-1}(\sC_3')^\red$ & 1 & $n_{s_0s_2\tau}+1$\\
       $\sZ_4'' = (\rho_\sA'')^{-1}(\sZ_4')^\red$ & 1 & $1$\\
       $\sC_4'' = (\rho_\sA'')^{-1}(\sC_4')^\red$ & $n_{s_2\tau}$ & $n_{s_2\tau}$\\
      \end{tabular}
      \captionof{table}{Subschemes of $\sA_1''$}
     \end{center}
    \end{proposition}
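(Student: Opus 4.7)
The plan is to deduce all four counts from Proposition \ref{Connected components A_0'} by showing that the restriction of $\rho_\sA'' : \sA_1'' \to \sA_0'$ to each reduced inverse image is a homeomorphism onto the corresponding subscheme of $\sA_0'$. Since the number of connected and irreducible components is a topological invariant, the counts will transport directly.

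First I would verify the key fiber computation at the level of the local model. Inspecting the equations in Section \ref{Step I description of local model}, each of $Z_3', C_3', Z_4', C_4'$ lies in the common locus $Z(x,bc) \subset U_0'$: on $Z_3' = Z(x,b) \cup Z(x,y,c,\widetilde{x})$ either $b=0$ or $c=0$, while $C_3' = Z_3'$, $Z_4' \subset Z_3'$, and $C_4' \subset Z_4'$. Over any geometric point in $Z(x,bc)$, the defining relations $u^{p-1}=x$ and $v^{p-1}=x+bc$ of $U_1''$ over $U_0'$ become $u^{p-1}=v^{p-1}=0$, so the geometric fiber of $U_1'' \to U_0'$ consists of a single (nonreduced) point supported at $u=v=0$. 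Invoking Lemma \ref{fiber correspondence}, the same conclusion holds for the geometric fibers of $\rho_\sA''$ over points of $\sZ_3', \sC_3', \sZ_4', \sC_4'$.

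Next, $\rho_\sA''$ is finite, being the base change of the finite surjective morphism $\pi : \sA_1 \to \sA_0$. Its restriction to $(\rho_\sA'')^{-1}(\sZ_j')$ and $(\rho_\sA'')^{-1}(\sC_j')$ is therefore finite and surjective onto $\sZ_j'$ and $\sC_j'$ respectively, and by the preceding fiber calculation it is bijective on geometric points. A finite bijective morphism is a universal homeomorphism, and reducing does not alter the underlying topological space, so the induced maps $\sZ_j'' \to \sZ_j'$ and $\sC_j'' \to \sC_j'$ are homeomorphisms. The desired counts are now just a restatement of Proposition \ref{Connected components A_0'}.

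The main thing to be careful about is the local-model fiber computation: once one confirms that all four subschemes in question lie in the ``good'' locus $Z(x,bc)$ where $\rho_U''$ collapses each fiber to a single point, the rest is soft topology. This containment is a direct inspection of the defining equations from Section \ref{Step I description of local model}, so no delicate argument is required beyond bookkeeping.
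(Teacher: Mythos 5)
Your proof is correct, and it reorganizes the paper's argument in a cleaner, more uniform way while resting on the same key fact. The paper proceeds component by component: it treats the strict transform of $\sZ_{11}$ via the preceding proposition, and for the two-dimensional components it observes that they lie in the $p$-rank zero locus $\sZ_{01}\cap\sZ_{10}$, over which $\pi^{-1}(x)$ is a single closed point, concluding irreducibility of each $(\rho_\sA'')^{-1}(\sW')$ and connectedness of $\sZ_3''$ by hand; the analogous statements for $\sC_4''$, $\sZ_4''$, $\sC_3''$ are then asserted to follow similarly. You instead notice that the singleton-fiber property of $\pi$ (hence of $\rho_\sA''$) holds on the strictly larger locus $Z(q_0^*,q_1^*)=Z(x,x+bc)$, which already contains all four subschemes $\sZ_3',\sC_3',\sZ_4',\sC_4'$ --- including the strict transform of $\sZ_{11}$, which is \emph{not} in the $p$-rank zero locus --- so that the restriction of $\rho_\sA''$ over each of them is finite, surjective, and universally injective, hence a universal homeomorphism, and all counts transport at once from Proposition \ref{Connected components A_0'}. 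This buys uniformity (no case split, no appeal to the previous proposition for $\sZ_{11}''$) at the cost of invoking the slightly heavier ``finite $+$ radicial $+$ surjective $\Rightarrow$ universal homeomorphism'' fact; note only that ``finite bijective'' alone is not sufficient for this conclusion (cf.\ $\Spec(\C)\rightarrow\Spec(\R)$), so the statement should be phrased in terms of universal injectivity, which is exactly what your computation of the geometric fibers $\Spec(\kappa[u,v]/(u^{p-1},v^{p-1}))$ establishes.
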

    \begin{proof}
     The reader may wish to consult the diagram in the previous proof. From the proof of Proposition \ref{Connected components A_0'}, $\sW'$ arises as the strict transform of an irreducible component of $\sZ_3$. If $\sW'$ is the strict transform of $\sZ_{11}$ then the claim is clear, so assume that $\sW'$ is the strict transform of some two dimensional irreducible component of $\sZ_3$. From the proof of Proposition \ref{Connected components A_0}, it must be that this two dimensional component of $\sZ_3$ is contained in $\sZ_{01} \cap \sZ_{10}$ and hence $\rho_\sA'(\sW') \subset \sZ_{01} \cap \sZ_{10}$. Let $x'$ be a closed point of $\sW'$ and set $x = \rho_\sA'(x')$. Then $x$ is in $\sZ_{01} \cap \sZ_{10}$, the $p$-rank zero locus, and so $\pi^{-1}(x)$ consists of a single closed point. Therefore the fiber $(\rho_\sA'')^{-1}(x')$ also consists of a single closed point and so $(\rho_\sA'')^{-1}(\sW')$ is irreducible. From this argument it also follows that $\sZ_3''$ is connected.
 
     The statements about $\sC_4''$ follow in a similar manner to those about $\sZ_3''$, $\sZ_4'' = \sZ_{11}''$, and $\sC_3'' = \sZ_3''$.
    \end{proof}
  \subsection{Step III: Blowup $\sZ_3''$.}
   Set $\sA_1''' = \Bl_{\sZ_3''}(\sA_1'')$ and $U_1''' = \Bl_{Z_3''}(U_1'')$.
   \subsubsection{Description of $U_1'''$}
    \label{Description of the local model U_1'''}
    To simplify the notation, let $X = U_1'''$. A presentation of $X$ is given by the closed subscheme of $\Proj_{U_1''}(\sO[\widetilde{u},\widetilde{v}])$, where $\widetilde{u}$ and $\widetilde{v}$ are of grade 1, cut out by the following equations.
     $$u\widetilde{v} - \widetilde{u}v, \quad (\widetilde{x}+\widetilde{b}c)\widetilde{u}^{p-1}-\widetilde{x}\widetilde{v}^{p-1}, \quad y\widetilde{u}^{p-1}-(y+ac)\widetilde{v}^{p-1}$$
    The scheme $X$ is covered by four standard affine charts.
    \begin{center} 
     \begin{tabular}{ccl}
      $X_{00}$ & $\widetilde{x} = \widetilde{u} = 1$ & $\sO_{\breve E}[y,a,c,u,\widetilde{b},\widetilde{v}]/(u^{p-1}y-p, \widetilde{v}^{p-1}-(1+\widetilde{b}c))$\\
      $X_{01}$ & $\widetilde{x} = \widetilde{v} = 1$ & $\sO_{\breve E}[y,c,v,\widetilde{b},\widetilde{u}]/(v^{p-1}\widetilde{u}^{p-1}y-p,\widetilde{u}^{p-1}(1+\widetilde{b}c)-1)$\\
      $X_{10}$ & $\widetilde{b} = \widetilde{u} = 1$ & $\sO_{\breve E}[a,b,u,\widetilde{x},\widetilde{v}]/(b\widetilde{x}^2\widetilde{v}^{p-1}(-a)-p, u^{p-1}-b\widetilde{x})$\\
      $X_{11}$ & $\widetilde{b} = \widetilde{v} = 1$ & $\sO_{\breve E}[a,b,s,v,\widetilde{u}]/(bs^2\widetilde{u}^{p-1}(-a)-p, v^{p-1}-bs)$
     \end{tabular}
    \end{center}
    The last chart uses a change of coordinates $s=\widetilde{x}+c$. We can cover $X_{00}$ with two open subschemes each defined respectively by the condition $\widetilde{b}$ and $1+\widetilde{b}c$ is invertible. These open subschemes are
    \begin{center} 
     \begin{tabular}{ccl}
      $X_{00}'$ & $\widetilde{b} \neq 0$ & $\sO_{\breve E}[a,u,\widetilde{b}^{\pm 1},\widetilde{v}]/(u^{p-1}(-a)\widetilde{v}^{p-1}\widetilde{b}^{-1}-p)$\\
      $X_{00}''$ & $1+\widetilde{b}c \neq 0$ & $\sO_{\breve E}[y,c,u,\widetilde{b},\widetilde{v},(1+\widetilde{b}c)^{-1}]/(u^{p-1}y-p, \widetilde{v}^{p-1}-(1+\widetilde{b}c)).$
     \end{tabular}
    \end{center}
    Since $X_{00}' \subset X_{10}$ as an open subscheme, $X$ is covered by $X_{00}''$, $X_{01}$, $X_{10}$, and $X_{11}$. By inspection we see that $X = U_1'''$ is normal, the true center of $\rho_U''' : U_1''' \rightarrow U_1''$ is $C_3'' = Z(u,v)$, the fundamental center is $Z_{01}'' \cap Z_{10}'' = Z(u,v,\widetilde{x},c)$, and the residual locus is $Z_{11}'' \setminus (Z_{01}'' \cap Z_{10}'')$. The fundamental center is two dimensional and smooth, the residual locus is three dimensional, and the exceptional locus is equidimensional of dimension three. Note that $C_4'' = Z(u,v,b,c,\widetilde{x})$ is smooth of dimension two and $C_4''$ intersects with the fundamental center of $U_1''' \rightarrow U_1''$ in a smooth one dimensional subscheme. Taking the strict transform under $\rho_U'''$ we have $Z_{4}''' = Z(u,v,b)$ and $C_{4}''' = Z(u,v,b,c,\widetilde{x})$.
   \subsubsection{Description of $\sA_1'''$}
    The integral model $\sA_1'''$ has $U_1'''$ as an \'{e}tale local model, so $\sA_1'''$ is normal.

    \begin{proposition}
     The special fiber of $\sA_1'''$ has precisely $4+n_{s_0s_2\tau}$ irreducible components. Three are given by the strict transforms of $\sZ_{00}''$, $\sZ_{01}''$, and $\sZ_{10}''$. The other $1+n_{s_0s_2\tau}$ are contained in the exceptional locus: one lying above $\sZ_{11}''$ and one lying above each two dimensional irreducible component of $\sZ_3''$.
    \end{proposition}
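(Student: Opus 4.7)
The aim is to account for the irreducible components of $\sA_1'''\otimes\overline{\F}_p$ as either strict transforms of the four components $\sZ_{ij}''$ of $\sA_1''\otimes\overline{\F}_p$ identified in Step II, or as new components appearing in the exceptional locus of the blowup $\rho_\sA''' : \sA_1''' \to \sA_1''$ along $\sZ_3''$.

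First, observe that among the four three-dimensional irreducible components of $\sA_1''\otimes\overline{\F}_p$, only $\sZ_{11}''$ is contained in the center $\sZ_3''$. Indeed, by Proposition \ref{subschemes of sA_0 as KR-strata} the subscheme $\sZ_3 = \sZ_{11}\cup(\sZ_{01}\cap\sZ_{10})$ has $\sZ_{11}$ as its unique three-dimensional irreducible component, so the other three components $\sZ_{00}''$, $\sZ_{01}''$, $\sZ_{10}''$ are not contained in $\sZ_3''$. Each therefore has a well-defined three-dimensional irreducible strict transform $\sZ_{ij}'''$ in $\sA_1'''$, giving three of the asserted components; the strict transform of $\sZ_{11}''$ is empty, since $\sZ_{11}''\subset\sZ_3''$.

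Next, by the local analysis of Section \ref{Description of the local model U_1'''} (transferred to $\sA_1'''$ through the \'{e}tale local model), the exceptional locus $\sE'''$ of $\rho_\sA'''$ is equidimensional of dimension three. Any irreducible component of $\sA_1'''\otimes\overline{\F}_p$ other than the three strict transforms above must therefore be a top-dimensional irreducible component of $\sE'''$, and it remains to count these. Split the analysis according to the decomposition of $\sZ_3''$ into the residual locus of $\rho_\sA'''$ (a dense open subset of $\sZ_{11}''$) and the $n_{s_0s_2\tau}$ two-dimensional irreducible components of the fundamental center (which correspond via Proposition \ref{Connected components for sA_1''} to the smooth, irreducible connected components of $\overline{\sA_{s_0s_2\tau}''}$). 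Above each two-dimensional base component, the local charts show that the fiber of $\rho_\sA'''$ is $\mathbb{P}^1$, yielding a $\mathbb{P}^1$-bundle over a smooth irreducible base, hence a single irreducible three-dimensional component of $\sE'''$; this accounts for $n_{s_0s_2\tau}$ exceptional components, one over each two-dimensional component of $\sZ_3''$. Above the residual locus, where fibers of $\rho_U'''$ are finite by the description in Section \ref{Description of the local model U_1'''}, the chart-by-chart inspection of $X_{00}'', X_{01}, X_{10}, X_{11}$ produces a single three-dimensional stratum in each chart; these glue into a single irreducible component of $\sE'''$ lying over $\sZ_{11}''$, by irreducibility of $\sZ_{11}''=\overline{\sA_{s_2s_1s_2\tau}''}$ together with density of the residual locus in $\sZ_{11}''$.

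Totalling: $3 + n_{s_0s_2\tau} + 1 = 4 + n_{s_0s_2\tau}$ irreducible components, all three-dimensional, matching the claim. The main obstacle is the careful chart-by-chart bookkeeping needed to identify, in each affine chart of $U_1'''$ from Section \ref{Description of the local model U_1'''}, which local irreducible components of the special fiber are pieces of the strict transforms $\sZ_{ij}'''$ and which are pieces of the new exceptional components, and then to verify that the local pieces of each new exceptional component glue into a single global irreducible component; this is most delicate for the exceptional component above $\sZ_{11}''$, where one must confirm that the several charts covering the residual locus do not introduce extra irreducible components.
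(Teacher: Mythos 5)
Your overall strategy — splitting the irreducible components of $\sA_1''' \otimes \overline{\F}_p$ into strict transforms of the $\sZ_{ij}''$ plus top-dimensional components of the exceptional locus, and then counting the latter by decomposing the center into $\sZ_{11}''$ and the two-dimensional pieces — follows the paper's outline. The treatment of the three strict transforms and the fibration argument over the fundamental center are in the spirit of the paper. (One small point of convention: under the paper's Definition of strict transform, the strict transform of $\sZ_{11}'' \subset C_\rho$ is $(\rho_\sA''')^{-1}(\sZ_{11}'')^\red$, not empty; this mismatch is cosmetic but worth noting.)

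The genuine gap is in your argument that there is a \emph{single} exceptional component lying over $\sZ_{11}''$. You claim the local pieces ``glue into a single irreducible component of $\sE'''$... by irreducibility of $\sZ_{11}''$ together with density of the residual locus in $\sZ_{11}''$.'' These two facts do not imply irreducibility of $(\rho_\sA''')^{-1}(\sZ_{11}'')^\red$. Over the residual locus the fibers of $\rho_\sA'''$ are merely finite, not single points a priori, and a finite cover of an irreducible variety need not be irreducible. What the local model does give is that $(\rho_\sA''')^{-1}(\sZ_{11}'')^\red$ is smooth and equidimensional of dimension three — hence a disjoint union of irreducible components of $\sA_1''' \otimes \overline{\F}_p$ — but this leaves open how many. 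The paper closes the gap with a global argument that your proposal omits: every such irreducible component maps surjectively onto $\sZ_{11}''$ (because the fiber dimension is at most one, so a proper sub-image of $\sZ_{11}''$ would force at least a one-dimensional fiber over a two-dimensional locus, which the local model rules out); therefore all these components meet the fiber over a closed point $x'' \in \sZ_{11}'' \cap \overline{\sC_3'' \setminus \sZ_{11}''}$, and since that fiber is connected, $(\rho_\sA''')^{-1}(\sZ_{11}'')^\red$ is connected, hence irreducible. Chart-by-chart inspection of the local model alone does not establish this global connectedness, so your count over $\sZ_{11}''$ is unjustified as written.
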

    \begin{proof}
     As can be seen from the local model, the exceptional locus $\sE'''$ of $\sA_1''' \rightarrow \sA_1''$ is equidimensional of dimension three. As $\sC_3''$ has $1+n_{s_0s_2\tau}$ irreducible components by Proposition \ref{Connected components for sA_1''}, $\sE'''$ must consist of at least $1+n_{s_0s_2\tau}$ irreducible components. Denote the irreducible components of $\sE'''$ by $\set{\sW_i'''}$. Without loss of generality assume that $\rho_\sA'''(\sW_1''') \subset \sZ_{11}''$ and that $\rho_\sA'''(\sW_2'''), \dotsc, \rho_\sA'''(\sW_{1+n_{s_0s_2\tau}}''')$ are each contained in a distinct two dimensional irreducible component of $\sZ_3''$.

     For any $i$, if $\rho_\sA'''(\sW_i''') \subset \sZ_{11}''$ then $\rho_\sA'''(\sW_i''') = \sZ_{11}''$. Indeed, since $\rho_\sA'''$ is proper it suffices to show that $\rho_\sA'''(\sW_i''')$ is three dimensional. As can be seen from the local model, the fiber above a closed point of $\sA_1''$ is at most one dimensional. Hence if $\rho_\sA'''$ maps $\sW_i'''$ to something of dimension smaller than three, then the fibers of $\rho_\sA'''$ must be of dimension at least one on a two dimensional subscheme of $\sZ_{11}''$. As this does not occur on the local model, it can not occur here as well.

     Now consider a closed point $x''$ of $\overline{\sC_3'' \setminus \sZ_{11}''}$, the fundamental center of $\sA_1''' \rightarrow \sA_1''$. From the local model, the fiber above $x''$ with respect to $\rho_\sA'''$ is connected and smooth. Thus for each irreducible component of $\overline{\sC_3'' \setminus \sZ_{11}''}$, there is a single irreducible component of the exceptional locus of $\rho_\sA'''$ mapping surjectively onto it. Recall we have labeled these components $\sW_2''', \dotsc, \sW_{1+n_{s_0s_2\tau}}'''$.

     To show $(\rho_\sA''')^{-1}(\sZ_{11}'')^\red = \sW_1'''$, it suffices to show that $(\rho_\sA''')^{-1}(\sZ_{11}'')^\red$ is irreducible. From the local model, $(\rho_\sA''')^{-1}(\sZ_{11}'')^\red$ is smooth and equidimensional of dimension three. This implies that $(\rho_\sA''')^{-1}(\sZ_{11}'')^\red$ is a disjoint union of irreducible components of $\sA_1''' \otimes \overline{\F}_p$. As each of these irreducible components maps into $\sZ_{11}''$, by the above we have that they map surjectively onto $\sZ_{11}''$. Thus there is a closed point $x''$ of $\sZ_{11}'' \cap \overline{\sC_3'' \setminus \sZ_{11}''}$ contained in the image of every irreducible component. As remarked before, the fiber above $x''$ is connected, and hence $(\rho_\sA''')^{-1}(\sZ_{11}'')^\red$ is connected. The claim now follows.

     Suppose there exists another irreducible component $\sW'''$. By the above, it must be that $\sW'''$ is mapped via $\rho_\sA'''$ into $\sZ_{11}''$. But then $\sW'''$ is contained in $(\rho_\sA''')^{-1}(\sZ_{11}'')^\red = \sW_1'''$ and therefore $\sW''' = \sW_1'''$.
    \end{proof}

    \begin{proposition}\label{prop connected and irred components of sA_1'''}
     The number of connected and irreducible components of the subschemes of $\sA_1'''$ are as follows.
     \begin{center}
      \begin{tabular}{c|c|c}
       Subscheme of $\sA_1'''$ & \# connected & \# irreducible\\
       \hline
       $\sZ_4''' = \ST(\sZ_{11}'')$ & $1$ & $1$\\
       $\sC_4''' = \ST(\sC_4'')$ & $n_{s_2\tau}$ & $n_{s_2\tau}$
      \end{tabular}
      \captionof{table}{Subschemes of $\sA_1'''$}
    \end{center}
    \end{proposition}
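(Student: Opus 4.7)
The two assertions are handled separately, both by transferring computations from the local model $U_1'''$ via the \'etale correspondence.

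For $\sZ_4'''$: tracing through the definitions, $\sZ_4'' = \sZ_{11}''$, and the inclusion $\sZ_{11} \subset \sZ_3 = \sZ_{11}\cup(\sZ_{01}\cap\sZ_{10})$ persists under the Step I strict transform and the Step II reduced preimage, giving $\sZ_{11}''\subset\sZ_3''$. Since the center of the Step III blowup is $\sZ_3''$, the definition of strict transform yields $\sZ_4''' = (\rho_\sA''')^{-1}(\sZ_{11}'')^{\red}$. But this is precisely the scheme identified as $\sW_1'''$ in the proof of the preceding proposition, where it was shown to be a single irreducible component (hence connected). This gives the counts $(1,1)$.

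For $\sC_4'''$: Proposition~\ref{subschemes of sA_0 as KR-strata} gives $\sC_4 = \overline{\sA_{s_2\tau}} \subset \overline{\sA_{s_2s_1s_2\tau}} = \sZ_{11} \subset \sZ_3$, so $\sC_4''\subset\sZ_3''$ and therefore $\sC_4''' = (\rho_\sA''')^{-1}(\sC_4'')^{\red}$. The plan is to prove that the restriction $\rho_\sA'''|_{\sC_4'''}\colon \sC_4'''\to \sC_4''$ is proper and surjective with geometrically connected fibers; by Stein factorization this yields a bijection on connected components, matching the $n_{s_2\tau}$ components of $\sC_4''$ from Proposition~\ref{Connected components for sA_1''}. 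For irreducibility, since each connected component of $\sC_4''$ is irreducible (in fact smooth, as it is an open piece of $\overline{\sA_{s_2\tau}}$ pulled back), and the fibers are connected in the reduced scheme $\sC_4'''$, the corresponding preimage in $\sC_4'''$ is irreducible as well.

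The connected-fiber claim will be verified on the local model using Lemma~\ref{fiber correspondence}: one computes fibers of $\rho_U'''\colon U_1'''\to U_1''$ over closed points of $C_4'' = Z(u,v,b,c,\widetilde{x})$. Since $C_4''\subset Z_3'' = Z(u,v)$, each such fiber is cut out in $\Proj(k[\widetilde{u},\widetilde{v}])$ by the specializations of the three relations recorded in Section~\ref{Description of the local model U_1'''}; inspection in each of the four affine charts $X_{00}, X_{01}, X_{10}, X_{11}$ shows these fibers are either a single reduced point (away from the fundamental center) or a connected subscheme of $\mathbb{P}^1$ (over the fundamental center $Z(u,v,\widetilde{x},c)\cap C_4''$). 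The main obstacle is this fiber analysis at the degenerate locus of $C_4''$, where auxiliary coordinates such as $y$ or $a$ vanish and one must confirm via a careful case-by-case computation in each chart that the fiber does not break into a disjoint union of points (as the relation $y\widetilde{u}^{p-1} - (y+ac)\widetilde{v}^{p-1}$ could, in principle, contribute $p-1$ disjoint components). Once connectedness of the fiber is established uniformly, the component count and the irreducibility follow as above.
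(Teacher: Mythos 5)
Your overall strategy is the same as the paper's: both counts are transferred to the local model, the $\sZ_4'''$ line is read off from the preceding proposition (where $(\rho_\sA''')^{-1}(\sZ_{11}'')^{\red}=\sW_1'''$ was shown to be a single irreducible component), and the $\sC_4'''$ line is reduced to connectedness of the fibers of $\rho_U'''$ over closed points of $C_4''$. Two steps, however, are not closed as written. First, the ``main obstacle'' you flag dissolves once you use all the relations: on $C_4''=Z(u,v,b,c,\widetilde{x})$ we have $\widetilde{b}\neq 0$, and the relation $a\widetilde{x}+\widetilde{b}y+a\widetilde{b}c=0$ inherited from $U_0'$ then forces $y=0$ there as well. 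Hence both defining relations $(\widetilde{x}+\widetilde{b}c)\widetilde{u}^{p-1}-\widetilde{x}\widetilde{v}^{p-1}$ and $y\widetilde{u}^{p-1}-(y+ac)\widetilde{v}^{p-1}$ vanish identically along $C_4''$, the fiber over every closed point of $C_4''$ is the full $\mathbb{P}^1$, and the feared degeneration into $p-1$ disjoint points never occurs. This one-line verification should be carried out rather than deferred, since the entire count rests on it.

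Second, your inference for irreducibility --- irreducible base plus connected fibers implies irreducible preimage --- is not valid. For instance, $Z(ty)\subset\mathbb{A}^2$ projecting to $\mathbb{A}^1_t$ has connected fibers over an irreducible base yet is reducible; a proper example is the total transform of a line through the center of $\Bl_0(\mathbb{A}^2)$. The paper closes this step differently: it observes on the local model that $\sC_4'''$ itself is smooth, so each connected component is automatically irreducible, and then only connectedness of $\ST(\sW'')$ needs to be checked. Alternatively, since every fiber over $C_4''$ is an irreducible curve of constant dimension and $\rho_\sA'''$ is closed, the preimage of each irreducible component of $\sC_4''$ is irreducible by the standard criterion. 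Either repair is short, but as written your argument does not establish the irreducibility column of the table.
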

    \begin{proof}
     The statements about $\sZ_4'''$ were proved in the previous proposition. From the local models, $\sC_4''$ and $\sC_4'''$ are both smooth so each connected component is irreducible. It thus suffices to show that for a connected component $\sW'' \subset \sC_4''$, $\ST(\sW'')$ is connected. From the local model, the fiber above every closed point of $\sC_4''$ is connected, and hence $\ST(\sW'')$ is connected as well.
    \end{proof}

  \subsection{Step IV: Successively blowing up $\sZ_{11}'''$ and its strict transforms.}
   In this last step we define $\sA_1^{[i]}$ for $4 \leq i \leq p+1$ by first blowing up $\sZ_{4}'''$ in $\sA_1'''$, and then blowing up the strict transform of $\sZ_{4}'''$ in each successive modification. Likewise, we define $U_1^{[i]}$ for $4 \leq i \leq p+1$ by blowing up $Z_4'''$ in $U_1'''$ and then blowing up the strict transform of $Z_4'''$ in each successive modification.
   \subsubsection{Description of $U_1^{[i]}$, $3 \leq i \leq p+1$}
    From Section \ref{Description of the local model U_1'''}, $Z_4''' = Z(u,v,b)$. This subscheme is Cartier except on the  charts $X_{10}$ and $X_{11}$, and both of these charts have a presentation 
     $$Y = \Spec(A), \quad A = \sO_{\breve E}[x_1,x_2,x_3,x_4,u]/(x_1x_2^2x_3^{p-1}x_4 - p, u^{p-1} - x_1x_2)$$
    in which $Z_3'''$ is given by the subscheme $W = Z(u,x_1)$. Set $Y^{[0]} = Y$, $W_0 = W$, and for $1 \leq i \leq p-2$ define $Y^{[i]}$ inside $Y \times \overbrace{\mathbb{P}^1 \times \dots \times \mathbb{P}^1}^{i-\text{times}}$ by 
      $$u^{[i]}u^{p-i-1} - x_1^{[i]} x_2, \qquad ux_1^{[1]} - x_1u^{[1]},$$
      $$uu^{[j-1]}x_1^{[j]} - x_1^{[j-1]}u^{[j]} \quad \text{ for } \quad 2 \leq j \leq i$$
    where we are writing $x_1^{[0]} = x_1$ and $x_1^{[i]}$ for projective coordinates. Let $W_{i}$ be the strict transform of $W_{i-1}$ in $Y^{[i]}$ for each $i \geq 1$.
    
    \begin{proposition}
     \label{Blowup}
     For $1 \leq i \leq p-2$ we have the following.
     \begin{compactenum}[(i)]
      \item $Y^{[i]} \cong \Bl_{W_{i-1}}(Y^{[i-1]})$.
      \item The true center of $Y^{[i]} \rightarrow Y^{[i-1]}$ is one dimensional and smooth.
      \item The fundamental center of $Y^{[i]} \rightarrow Y^{[i-1]}$ is equal to its true center.
      \item The exceptional locus of $Y^{[i]} \rightarrow Y^{[i-1]}$ is smooth and two dimensional.
     \end{compactenum}
     Furthermore, $Y^{[p-2]}$ is regular with special fiber a divisor with normal crossings.
    \end{proposition}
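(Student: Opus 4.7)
The plan is to establish (i)--(iv) simultaneously by induction on $i$ and then derive the final regularity/normal-crossings claim from the endpoint of the induction. At each stage I would work in two families of affine charts on $Y \times (\mathbb{P}^1)^i$: an ``A-chart'' where $x_1^{[j]}=1$ and a ``B-chart'' where $u^{[j]}=1$ on each $\mathbb{P}^1$ factor. The main structural lemma I would aim to prove by induction is that in the iterated B-chart (where $u^{[j]}=1$ for $1\le j\le i$), setting $s_j=x_1^{[j]}$, the relations of $Y^{[i]}$ force $s_{j-1}=us_j$ and $x_1=u^is_i$, leading to the presentation
$$\sO_{\breve E}[s_i,x_2,x_3,x_4,u]/\bigl(u^{p-i-1}-s_ix_2,\ u^{p-1}x_2x_3^{p-1}x_4-p\bigr),$$
while the mixed A/B-chart combinations either agree with chart A of $Y^{[1]}$ or are open subschemes of it.

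To prove (i), I would compare this chart description with the standard Proj presentation of $\Bl_{W_{i-1}}(Y^{[i-1]})$. By induction, $W_{i-1}$ is cut out in the B-chart of $Y^{[i-1]}$ by $(u,s_{i-1})$, and the identifications $s_{i-1}=us_i$ (in the B-chart) and $u=s_{i-1}t_i$ (in the A-chart) match exactly the two standard affine patches of the Rees algebra blowup. The main subtlety is the identification of $W_{i-1}$: unwinding the paper's definition of strict transform, the relation $u^{p-i}=s_{i-1}x_2$ in the B-chart of $Y^{[i-1]}$ forces the reduced total preimage $V(u)$ of $W_{i-2}$ to decompose as $V(u,s_{i-1})\cup V(u,x_2)$, and the Zariski closure of the part avoiding the true center of the previous blowup picks out the first component, giving $W_{i-1}=V(u,s_{i-1})$.

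For (ii)--(iv), these become local computations in the B-chart. The non-Cartier locus of $W_{i-1}$ is determined by the failure of $(u,s_{i-1})$ to be principal; a direct completion-at-a-point argument using that the image of $u^{p-i-1}$ in $\overline{\F}_p[[s_{i-1},u]]/(u^{p-i})$ is nonzero for $i\le p-2$ shows this locus is exactly $V(u,s_{i-1},x_2)$, which is smooth. Over each of its points the blowup fiber is $\mathbb{P}^1$, so the fundamental center equals the true center and the residual locus is empty, which gives (iii). The exceptional locus, as the reduced preimage of the true center, is then computed directly from the chart presentation and verified to be smooth of the claimed dimension.

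For the final claim: at $i=p-2$ the B-chart relation becomes $u=s_{p-2}x_2$, eliminating $u$ and leaving
$$\sO_{\breve E}[s,x_2,x_3,x_4]/\bigl(s^{p-1}x_2^px_3^{p-1}x_4-p\bigr).$$
At the worst point $(s,x_2,x_3,x_4,p)=0$, the monomial term has $\mathfrak{m}$-order at least $3p-1\ge 2$, so the defining equation satisfies $f\equiv -p\pmod{\mathfrak{m}^2}$; hence $p$ becomes linearly dependent on $s,x_2,x_3,x_4$ in $\mathfrak{m}/\mathfrak{m}^2$, giving embedding dimension equal to Krull dimension four and therefore regularity. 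Regularity elsewhere, as well as in the A-chart lineage, follows from analogous Jacobian computations where some $x_j$ is a unit. Since the special fiber is cut out by the monomial $s^{p-1}x_2^px_3^{p-1}x_4$ in a regular four-dimensional local ring, it is a non-reduced divisor with normal crossings in the sense of the footnote. The main obstacle is controlling how the non-regularly embedded center $V(u,x_1)\subset Y$ transforms under successive blowups: once the decomposition of each total preimage into a strict-transform component and an exceptional component is established, everything else reduces to monomial bookkeeping.
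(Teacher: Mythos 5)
Your proposal follows essentially the same route as the paper: explicit affine-chart computations identifying $Y^{[i]}$ with the blowup (the paper exhibits a closed immersion $\Bl_{W_{i-1}}(Y^{[i-1]})\rightarrow Y^{[i]}$ between integral schemes of equal dimension that is an isomorphism on generic fibers, while you match Rees-algebra charts directly; both identify the strict transform $W_i = Z(u,x_1^{[i]})$ by the same mechanism of forcing $x_1^{[i]}=0$ off the locus $Z(x_2)$), followed by reading off regularity and normal crossings from the monomial equation $s^{p-1}x_2^{p}x_3^{p-1}x_4 = p$ in the final chart. One caveat: do not try to ``verify the claimed dimension'' literally in (ii) and (iv) --- the true center $Z(u,x_1^{[i-1]},x_2)\cong \A^2_{\overline{\F}_p}$ is two-dimensional and the exceptional locus three-dimensional (as the paper itself uses later in Step IV), so the dimensions in the statement are off by one, and your own fiber computation already exhibits this.
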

    \begin{proof}
     (i) We proceed by induction. So assume $W_{i-1}$ corresponds to the ideal $(u, x_1^{[i-1]})$ which is certainly true for $i = 1$. The standard affine charts of $Y^{[i]}$, indexed by $1 \leq k \leq i+1$, are described by the conditions 
      $$u^{[j]} \neq 0 \quad \text{for} \quad 1 \leq j < k \qquad \text{and} \qquad x_1^{[j]} \neq 0 \quad \text{for} \quad k \leq j \leq i.$$
     In order to explicitly write them we must consider three cases.

     Case $k=1$: The coordinate ring is 
      $$\sO_{\breve E}\left[x_1, \frac{u^{[1]}}{x_1^{[1]}}, x_3, x_4\right]/\left(x_1^{2p-3}\left(\frac{u^{[1]}}{x_1^{[1]}}\right)^{2p-2}x_3^{p-1}x_4 - p\right).$$

     Case $1 < k < i+1$: The coordinate ring is 
      $$\sO_{\breve E}\left[\frac{x_1^{[k-1]}}{u^{[k-1]}}, \frac{u^{[k]}}{x_1^{[k]}}, x_3, x_4\right]/\left(\left(\frac{x_1^{[k-1]}}{u^{[k-1]}}\right)^{2p-k-2}\left(\frac{u^{[k]}}{x_1^{[k]}}\right)^{2p-k-1}x_3^{p-1}x_4 - p\right).$$

     Case $k=i+1$: The coordinate ring is 
      $$\sO_{\breve E}\left[\frac{x_1^{[i]}}{u^{[i]}}, x_2, , x_3, x_4, u\right]/\left(u^i\frac{x_1^{[i]}}{u^{[i]}}x_2^2x_3^{p-1}x_4 - p, u^{p-i-1} - \frac{x_1^{[i]}}{u^{[i]}}x_2\right).$$

     Note that each chart is integral. Direct computation shows that the equations defining $Y^{[i]}$ are part of those defining $\Bl_{W_{i-1}}(Y^{[i-1]})$, so there is a closed immersion $\iota : \Bl_{W_{i-1}}(Y^{[i-1]}) \rightarrow Y^{[i]}$ which is an isomorphism on the generic fiber. With $Y^{[i]}$ integral and of the same dimension as $\Bl_{W_{i-1}}(Y^{[i-1]})$, this implies $\iota$ is an isomorphism.

     To complete the induction, we must show that the strict transform of the subscheme $Z(u, x_1^{[i-1]})$ of $Y^{[i-1]}$ is the subscheme $Z(u, x_1^{[i]})$ of $Y^{[i]}$. From the charts above, the true center of $Y^{[i]} \rightarrow Y^{[i-1]}$ is given by $Z(u, x_1^{[i-1]}, x_2)$. Thus outside of the true center we have that $x_2$ is invertible, and so from the relation $u^{[i]}u^{p-i-1} - x_1^{[i]} x_2$ of $Y^{[i]}$ we get that $x_1^{[i]}$ is in the ideal defining the strict transform. As subschemes of $Y^{[i]}$, $Z(u, x_1^{[i-1]}, x_1^{[i]}) = Z(u, x_1^{[i]})$. This subscheme is irreducible and of dimension three and therefore we conclude it must be the strict transform of $W_{i-1}$.

     The remainder of the proposition now follows from inspection of the above charts.
    \end{proof}
    Using the explicit equations above, we record the global structure of the irreducible components of the special fiber.
    \begin{lemma}
     \label{tU_1 irreducible components}
     The irreducible components of $U_1^{[p-2]} \otimes \overline{\F}_p$ are described as follows.
     \begin{compactitem}
      \item There are $p+3$ components. Three components are given by $Z(\widetilde{u})$, $Z(\widetilde{v})$ and $Z(a)$. We index the other components by $1 \leq i \leq p$. For $1 \leq i \leq p-1$, the $i$th irreducible component is given by
       $$Z_i = Z(u, b, \widetilde{x}, b^{[1]}, b^{[2]}, \dotsc, b^{[i-2]}, u^{[i]}, u^{[i+1]}, \dotsc, u^{[p-2]})$$
      and the $p$th irreducible component is given by
       $$Z_p = Z(u, b, b^{[1]}, b^{[2]}, \dotsc, b^{[p-2]}).$$
      \item The components $Z(\widetilde{u})$, $Z(\widetilde{v})$, $Z(a)$, and $Z_i$ have multiplicity $p-1$, $p-1$, $1$, and $2p-i-1$ respectively. In particular, $Z_{p-1}$ is the only component with multiplicity divisible by $p$.
      \item The components $Z_1$ and $Z_p$ are isomorphic to $\A^{3}_{\overline{\F}_p}$. The components $Z_i$ with $2 \leq i \leq p-1$ are isomorphic to $\mathbb{P}^{1}_{\overline{\F}_p} \times \A^{2}_{\overline{\F}_p}$.
      \item The components intersect as indicated in the following ``dual complex'', drawn for $p = 5$. Each vertex represents an irreducible component and the label indicates its multiplicity. Each edge indicates that the two irreducible components intersect.
      \begin{center}
       \includegraphics[scale=0.3]{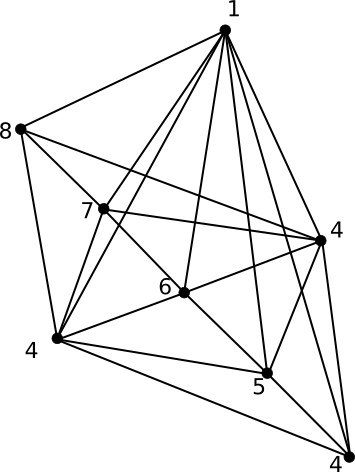}
       \captionof{figure}{Dual complex for $U_1^{[p-2]} \otimes \overline{\F}_p$ where $p=5$}
      \end{center}
      Moreover, a full subgraph that is a $k$-simplex indicates a $(k+1)$-fold intersection of the corresponding irreducible components, and conversely any $(k+1)$-fold intersection corresponds to a full subgraph which is a $k$-simplex.
      \item An intersection of components as indicated by a $k$-simplex has dimension $3-k$ over $\Spec(\overline{\F}_p)$.
     \end{compactitem}
    \end{lemma}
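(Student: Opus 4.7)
The plan is to carry out an explicit chart-wise computation on the local model $U_1^{[p-2]}$ using the affine covers given in Proposition~\ref{Blowup}. First, one verifies that the center $Z_4''' = Z(u,v,b)$, and hence each of its strict transforms in the subsequent blowups, is already Cartier on the charts $X_{00}''$ and $X_{01}$ of $U_1'''$: on $X_{00}''$ the relations $v = u\widetilde{v}$ and $b = u^{p-1}\widetilde{b}$ reduce $Z(u,v,b)$ to the principal ideal $(u)$, and similarly on $X_{01}$. Consequently the Step~IV blowups are isomorphisms on these charts and no new components arise there; whatever components are seen on $X_{00}''$ and $X_{01}$ will match components already enumerated below.

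All the combinatorial content therefore lives on $X_{10}$ and $X_{11}$, each of which has the form $Y$ of Proposition~\ref{Blowup} with $x_1 = b$, $x_4 = -a$, and $x_3 = \widetilde{v}$ (on $X_{10}$) or $x_3 = \widetilde{u}$ (on $X_{11}$). After $p-2$ blowups they are covered by the $p-1$ affine charts of $Y^{[p-2]}$. On chart $k$ with $1 < k < p-1$ the coordinate ring is $\sO_{\breve E}[v_{k-1}, w_k, x_3, x_4]/(v_{k-1}^{2p-k-2} w_k^{2p-k-1} x_3^{p-1} x_4 - p)$, and charts $k = 1$ and $k = p-1$ take the boundary forms of Proposition~\ref{Blowup}; in each case the special fiber is a divisor with exactly four irreducible components whose multiplicities are read off from the exponents. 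One then glues across consecutive charts: the blowup relation $u = v_{k-1} w_k$, together with $v_k w_k = 1$ on the overlap of charts $k$ and $k+1$, yields $x_1^{[k-1]} = v_k^2 w_{k+1}$ on chart $k+1$, so $Z(v_{k-1})$ on chart $k$ and $Z(w_{k+1})$ on chart $k+1$ coincide on the overlap (since $v_k$ is invertible there). This assembles the chart-local components into $p$ global components $Z_1, \dotsc, Z_p$ of multiplicities $2p-2, 2p-3, \dotsc, p-1$: $Z_i$ is visible on charts $i-1$ and $i$ when $2 \leq i \leq p-1$, only on chart $1$ when $i = 1$, and only on chart $p-1$ when $i = p$. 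The components $Z(x_3)$ and $Z(x_4)$ are common to every chart. Finally, matching $X_{10}$ and $X_{11}$ across their overlap via $u\widetilde{v} = v\widetilde{u}$ identifies the $Z_i$ and $Z(a) = Z(x_4)$ globally, while $Z(\widetilde{v})$ and $Z(\widetilde{u})$ remain distinct, giving the stated total of $p+3$ components.

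The ideal description of $Z_i$ is obtained by back-substituting the chart-$i$ formulas $u = v_{i-1} w_i$, $u^{[j]} = u^{j-i} w_i$ for $j \geq i$, $x_1^{[j]} = u^{i-1-j} v_{i-1}$ for $j \leq i-1$ (so in particular $x_1 = u^{i-1} v_{i-1}$), and $\widetilde{x} = v_{i-1}^{p-i-1} w_i^{p-i}$ into the vanishing condition $w_i = 0$; this forces exactly the generators listed in the lemma to vanish while leaving $v_{i-1} = b^{[i-1]}, x_3, x_4$ free. On each chart where $Z_i$ is visible we thus obtain a copy of $\A^3_{\overline{\F}_p}$, so $Z_1$ and $Z_p$ (each visible on a single chart) are $\A^3$, while for $2 \leq i \leq p-1$ the two $\A^3$ charts are glued along $w_{i-1} v_{i-1} = 1$ on the overlap of charts $i-1$ and $i$, yielding $\mathbb{P}^1_{\overline{\F}_p} \times \A^2_{\overline{\F}_p}$, with the $\mathbb{P}^1$ parametrized by the projective coordinate $[u^{[i-1]} : x_1^{[i-1]}]$ and the $\A^2$ by $(x_3, x_4)$.

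The intersections and dual complex are then a chart-by-chart matter. On any fixed chart the four visible components are cut out by the four coordinates $v_{k-1}, w_k, x_3, x_4$ (or the boundary analogues), so any subset of size $j+1$ has joint vanishing locus of dimension $3 - j$ and this accounts for all the $j$-simplices in the dual complex. The principal obstacle is the cross-chart-tower bookkeeping in the second paragraph: one must verify that the $p$ blowup components on the $X_{10}$-tower are genuinely identified with the corresponding components on the $X_{11}$-tower (so that the total count is $p+3$ rather than $2p+3$). This is carried out by comparing the projective coordinates $[u^{[j]} : b^{[j]}]$ introduced by the blowups over $X_{10}$ with the coordinates $[v^{[j]} : b^{[j]'}]$ over $X_{11}$ and using $u\widetilde{v} = v\widetilde{u}$ on the overlap to exhibit the required identification of projective bundles.
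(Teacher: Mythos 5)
Your overall strategy --- read everything off the explicit charts of Proposition \ref{Blowup} over $X_{10}$ and $X_{11}$, check that the Step IV centers are Cartier on $X_{00}''$ and $X_{01}$, and then glue --- is exactly the computation the paper leaves implicit, and your chart-by-chart bookkeeping of multiplicities, of the ideals of the $Z_i$, and of the intersection pattern within each chart is essentially correct. However, there is a genuine inconsistency between your third and fourth paragraphs, and it sits precisely at the one point you yourself flag as "the principal obstacle." In paragraph four you (correctly) argue that the components over the $X_{10}$-tower and the $X_{11}$-tower must be identified pairwise: for instance $Z_p = \ST(Z(u,v,b))$ meets the tower over $X_{10}$ in an $\A^3$ with free coordinates $\widetilde{x},\widetilde{v},a$ and meets the tower over $X_{11}$ in an $\A^3$ with free coordinates $s,\widetilde{u},a$, and since $\widetilde{v}$ (resp.\ $\widetilde{u}$) is free on each piece, each piece meets the overlap $\{\widetilde{u}\widetilde{v}\ne 0\}$ densely, so the two pieces have the same generic point and form a single global component. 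But then your paragraph three, which computes the isomorphism type of $Z_i$ using only the charts of a \emph{single} tower $Y^{[p-2]}$, is only computing $Z_i\cap(X_{10}\text{-tower})$, not $Z_i$. The global $Z_p$ is the union of the two affine pieces above, glued along $\widetilde{u}=\widetilde{v}^{-1}$, $s=\widetilde{x}\,\widetilde{v}^{p-1}$; this contains complete rational curves (the fibers of the exceptional divisor of the Step III blowup over $\sZ_{11}''$, parametrized by $[\widetilde{u}:\widetilde{v}]$) and so is not $\A^3$. The same problem affects $Z_1$ and the claim $Z_i\cong\mathbb{P}^1\times\A^2$ for $2\le i\le p-1$: each such $Z_i$ is covered by \emph{four} affine charts, two from each tower. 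You cannot simultaneously assert the cross-tower identification (needed for the count $p+3$) and the single-tower isomorphism types; the argument as written does not establish the third bullet, and your own paragraph four shows why the computation there is incomplete.

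Two smaller points. First, your back-substitution "forces exactly the generators listed in the lemma to vanish while leaving $v_{i-1}=b^{[i-1]}$ free" should have alerted you to a discrepancy at $i=1$: there $v_0=b^{[0]}=b=x_1$ is the free coordinate on $Z_1\cap(\text{chart }1)$ (one has $u=x_1t$, $\widetilde{x}=x_1^{p-2}t^{p-1}$ with $t=u^{[1]}/b^{[1]}$, so $Z(t)$ does not force $x_1=0$), whereas the displayed ideal for $Z_1$ lists $b$ among its generators; your argument verifies the stated ideals for $2\le i\le p-1$ but contradicts the stated one for $i=1$ without remarking on it. Second, the identities you quote such as $u^{[j]}=u^{j-i}w_i$ and $x_1^{[k-1]}=v_k^2w_{k+1}$ conflate the projective coordinates $u^{[j]},x_1^{[j]}$ with the affine ratios $w_j=u^{[j]}/x_1^{[j]}$ and $v_j=x_1^{[j]}/u^{[j]}$; the intended relations ($w_j=u^{j-i}w_i$, $v_{k-1}=v_k^2w_{k+1}$) are correct, but as written the formulas are not meaningful. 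These are cosmetic; the unresolved tension between the gluing of the two towers and the asserted isomorphism types is not.
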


   \subsubsection{Description of $\sA_1^{[i]}$, $3 \leq i \leq p+1$}
    \begin{proposition}For $3 \leq i \leq p+1$, the scheme $\sA_1^{[i]} \otimes \overline{\F}_p$ has $4 + n_{s_0s_2\tau} + (i-3) \cdot n_{s_2\tau}$ irreducible components.
    \end{proposition}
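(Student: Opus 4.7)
The plan is to proceed by induction on $i$. The base case $i = 3$ is the content of Proposition~\ref{prop connected and irred components of sA_1'''} together with the preceding proposition in step~III, which give $4 + n_{s_0s_2\tau}$ irreducible components for $\sA_1'''$. For the inductive step I claim that for each $3 \le i \le p$ the blowup $\sA_1^{[i+1]} = \Bl_{\sZ_4^{[i]}}(\sA_1^{[i]})$ contributes exactly $n_{s_2\tau}$ new irreducible components, while all existing irreducible components persist as their strict transforms. The persistence of old components is immediate from Proposition~\ref{Blowup} and Lemma~\ref{fiber correspondence}: the true center of $\sA_1^{[i+1]} \to \sA_1^{[i]}$ has dimension strictly less than three (it \'{e}tale-locally corresponds to the one-dimensional true center of $Y^{[i-2]} \to Y^{[i-3]}$), while $\sA_1^{[i]} \otimes \overline{\F}_p$ is equidimensional of dimension three, so no irreducible component is swallowed into the center.

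To count the new components I would use the explicit charts in the proof of Proposition~\ref{Blowup}, which show that each local blowup $Y^{[i-2]} \to Y^{[i-3]}$ introduces exactly one new irreducible component of the special fiber with connected positive-dimensional fibers over the center. Combining this with Lemma~\ref{fiber correspondence} and the observation that the \'{e}tale-local picture around any point of $\sC_4^{[i]}$ is described by a single branch of the local model, one obtains that each connected component of $\sC_4^{[i]}$ contributes exactly one new irreducible component of $\sA_1^{[i+1]} \otimes \overline{\F}_p$. A parallel auxiliary induction shows that $\sC_4^{[i]}$ retains its $n_{s_2\tau}$ connected components at every stage: Proposition~\ref{prop connected and irred components of sA_1'''} supplies the base case $i = 3$, and at each subsequent stage the strict transform $\sC_4^{[i+1]}$ is an isomorphic copy of $\sC_4^{[i]}$ outside the true center of the blowup, so that removing a lower-dimensional closed subset from the smooth scheme $\sC_4^{[i]}$ does not disconnect any of its components.

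Summing the contributions across the $i - 3$ blowup steps yields the desired total $4 + n_{s_0s_2\tau} + (i - 3) n_{s_2\tau}$. The main technical obstacle is the careful tracking of $\sC_4^{[i]}$ through the successive blowups: because $\sC_4^{[i]}$ meets the center of every subsequent blowup, its strict transform does not follow from formal properties, and one must work explicitly with the charts in Proposition~\ref{Blowup} to confirm at each stage that the strict transform remains smooth of the correct dimension and that its intersections with the newly introduced exceptional components do not merge or split connected components.
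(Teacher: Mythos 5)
Your overall strategy coincides with the paper's: induct on $i$, show the old irreducible components persist as strict transforms because the true center has dimension less than three while the special fiber is equidimensional of dimension three, and show that each connected component of the true center contributes exactly one new irreducible component because the exceptional locus is smooth, equidimensional of dimension three, and has connected one-dimensional fibers over the true center. All of that is sound and is exactly what the paper does. The problem is the auxiliary induction that the true center at each stage has $n_{s_2\tau}$ connected components, which you correctly identify as the crux but then argue by the wrong mechanism.

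Your claim is that the true center at stage $i+1$ is the strict transform of the true center at stage $i$ and is ``an isomorphic copy of $\sC_4^{[i]}$ outside the true center of the blowup,'' so that connectedness is preserved because one only removes a lower-dimensional closed subset from a smooth scheme. This is not the geometric situation. The true center of $\sA_1^{[i]} \rightarrow \sA_1^{[i-1]}$ is always contained in the blowup center $\sZ_{i}^{[i-1]}$, hence the true center of the \emph{next} blowup, $\sC_{i+1}^{[i]}$, lies entirely inside the exceptional divisor $\sE^{[i]}$ of the current one and maps \emph{into} $\sC_{i}^{[i-1]}$ (in the local charts of Proposition \ref{Blowup}, $Z(u, x_1^{[i]}, x_2)$ sits over $Z(u, x_1^{[i-1]}, x_2)$). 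There is no open locus over which the new true center is an isomorphic copy of the old one away from the modification, so the ``removing a lower-dimensional subset does not disconnect'' argument has nothing to apply to. The paper's route is different and is what you need: identify $C_{i}^{[i-1]} = Z_{i}^{[i-1]} \cap E^{[i-1]}$ from the explicit equations, so that $\sC_{i}^{[i-1]} = \sZ_{i}^{[i-1]} \cap \sE^{[i-1]}$; observe that $\sE^{[i-1]}$ has exactly $n_{s_2\tau}$ connected components because its fibers over $\sC_{i-1}^{[i-2]}$ are connected; then show that $\sZ_{i}^{[i-1]}$ meets each connected component $\sW$ of $\sE^{[i-1]}$ (automatic, since $\sZ_{i}^{[i-1]}$ is the strict transform of the previous blowup center and $\sA_1^{[i-1]} = \Bl_{\sZ_{i-1}^{[i-2]}}(\sA_1^{[i-2]})$) and that $\sZ_{i}^{[i-1]} \cap \sW$ is connected (otherwise some fiber over $\sC_{i-1}^{[i-2]}$ would be disconnected). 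Without this replacement, your auxiliary induction does not close.
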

    \begin{proof}
     We recall the following facts from Section \ref{Description of the local model U_1'''}, Proposition \ref{prop connected and irred components of sA_1'''}, and Proposition \ref{Blowup}.
     \begin{compactenum}[(i)]
      \item $\sC_4^{[3]}$ has $n_{s_2\tau}$ connected components.
      \item For $3 \leq i \leq p+1$, $C_i^{[i-1]}$ is smooth of dimension two and the fiber over a closed point of $C_i^{[i-1]}$ is one dimensional, smooth, and connected.
      \item For $3 \leq i \leq p+1$, the exceptional locus $E^{[i]}$ is smooth and equidimensional of dimension three.
      \item For $3 \leq i \leq p+1$, $U_1^{[i]} \otimes \overline{\F}_p$ is equidimensional of dimension three.
     \end{compactenum}
     We proceed by induction, starting with the modification $\sA_1^{[4]} \rightarrow \sA_1^{[3]}$. Now (ii) and (iii) imply that the exceptional locus of $\sA_1^{[4]} \rightarrow \sA_1^{[3]}$ has the same number of connected components as the true center $\sC_4^{[3]}$, and that each such connected component is three dimensional and smooth. By (iv) each of these components is an irreducible component of $\sA_1^{[4]} \otimes \overline{\F}_p$, with all of the other irreducible components of $\sA_1^{[4]} \otimes \overline{\F}_p$ being given by the strict transform of the irreducible components of $\sA_1^{[3]} \otimes \overline{\F}_p$. Therefore by (i) there are $4 + n_{s_0s_2\tau} + n_{s_2\tau}$ irreducible components of $\sA_1^{[4]} \otimes \overline{\F}_p$.

     Assume the result is true for $i-1$ with $4 < i \leq p+1$. Once we show that $\sC_i^{[i-1]}$ has $n_{s_2\tau}$ connected components, the induction will follow using the same argument as in the above paragraph. Let $\sZ_i^{[i-1]}$ denote the strict transform of $\sZ_4'''$ with respect to the morphism $\sA_1^{[i-1]} \rightarrow \sA_1'''$, and likewise with $Z_i^{[i-1]}$ and the morphism $U_1^{[i-1]} \rightarrow U_1'''$. Inspection of the equations in the proof of Proposition \ref{Blowup} reveals that $C_i^{[i-1]} = Z_i^{[i-1]} \cap E^{[i-1]}$. With $\sZ_i^{[i-1]}$, $\sE^{[i-1]}$, and $\sC_i^{[i-1]}$ \'{e}tale locally corresponding to $Z_i^{[i-1]}$, $E^{[i-1]}$, and $C_i^{[i-1]}$ respectively, we therefore have $\sC_i^{[i-1]} = \sZ_i^{[i-1]} \cap \sE^{[i-1]}$.
     
     As before, (ii) and (iii) imply that $\sE^{[i-1]}$ has the same number of connected components as $\sC_{i-1}^{[i-2]}$, namely $n_{s_2\tau}$. Now (iii) also implies that if $\sZ_i^{[i-1]}$ meets a connected component $\sW \subset \sE^{[i-1]}$, then $\sZ_i^{[i-1]} \cap \sW$ is connected as otherwise the fiber above a point in $\sC_{i-1}^{[i-2]}$ would be disconnected. It therefore suffices to show that $\sZ_i^{[i-1]}$ meets each connected component of $\sE^{[i-1]}$. As $\sZ_i^{[i-1]} = \ST(\sZ_{i-1}^{[i-2]})$ and $\sA_1^{[i-1]} = \Bl_{\sZ_{i-1}^{[i-2]}}(\sA_1^{[i-2]})$, this is immediate.
    \end{proof}
    
    \begin{definition}\label{Graph definition}
     Let $p$ be an odd rational prime and $K^p \subset G(\A_f^p)$, which determines the numbers $n_{s_2\tau}$ and $n_{s_0s_2\tau}$ of $\sA_{0,K^p}$ (see the paragraph before Section \ref{sect Step 0: U_0, U_1, sA_0, and sA_1}). We then define the vertex-labeled graph $\Gamma_{K^p}$ as follows.
      \begin{compactenum}[(i)]
       \item Begin with $n_{s_2\tau}$ batons, each having $p-2$ vertices. Label the vertices $2p-3, 2p-4, \dots, p$ from head to tail.
       \begin{center}
        \includegraphics[scale=0.29]{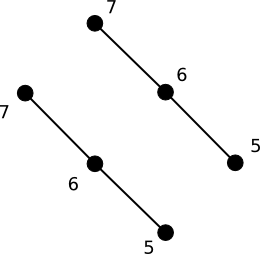}
        
        \captionof{figure}{Batons for $p = 5$ and $n_{s_2\tau} = 2$}
       \end{center}
 
       \item Add one vertex labeled $2p-2$ (top left) and edges between this vertex and the heads of the batons. Add two more vertices labeled $p-1$ (bottom left and top right) and connect these two vertices to every vertex in the batons, as well as the unique vertex labeled $2p-2$. Add $n_{s_0s_2\tau}$ vertices labeled $p-1$ (bottom right) and attach edges between these and the tails of the batons, as well as the other two vertices labeled $p-1$ from before.
       \begin{center}
        \includegraphics[scale=0.29]{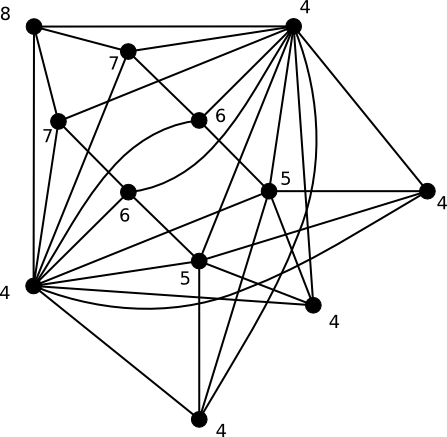}
        
        \captionof{figure}{Base for $p = 5$, $n_{s_2\tau} = 2$, and $n_{s_0s_2\tau} = 3$}
       \end{center}
 
       \item Add one vertex labeled $1$ and attach edges from this to every vertex constructed in the above two steps.
       \begin{center}
        \includegraphics[scale=0.3]{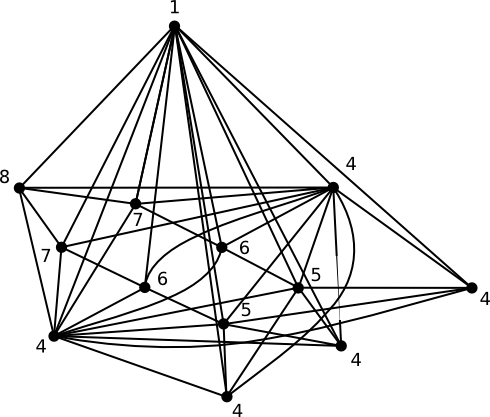}
        
        \captionof{figure}{Dual graph for $p = 5$, $n_{s_2\tau} = 2$, and $n_{s_0s_2\tau} = 3$}.
       \end{center}
     \end{compactenum}
    \end{definition}
    
    \begin{definition}
     We define the following subsets of the vertices of $\Gamma_{K^p}$.
     \begin{compactitem}
      \item The batons consist of the vertices given in step one above. They may be identified as the vertices with label in $[p,2p-3]$.
      \item The front consists of the vertices labeled $p-1$ on the bottom right of the diagram directly above. They may be identified as the vertices of label $p-1$ and edge degree $3+n_{s_2\tau}$ that share edges with precisely two vertices labeled 4.
      \item The sides consist of the vertices labeled $p-1$ which are not in the front.
     \end{compactitem}
    \end{definition}
    
    Recall that $\sA_0$ denotes a connected component of $\sA_0 \otimes_{\Z_p} \breve E$, and similarly with $\sA_1$.
    \begin{theorem}\label{thm description of resolution of sA_1}
     $\sA_1^{[p+1]} \rightarrow \sA_1$ is a resolution of singularities and the special fiber of $\sA_1^{[p+1]}$ is a nonreduced divisor with normal crossings. The special fiber has $4 + n_{s_0s_2\tau} + (p-2) \cdot n_{s_2\tau}$ irreducible components whose intersections are described by the vertex-labeled graph $\Gamma_{p,K^p}$ as follows.
     \begin{compactenum}[(i)]
      \item Each vertex represents an irreducible component. The label of the vertex is the multiplicity of the component.
      \item A full subgraph of $\Gamma_{K^p}$ which is a $k$-simplex indicates a $(k+1)$-fold intersection of the corresponding irreducible components, and conversely any $(k+1)$-fold intersection corresponds to a full subgraph which is a $k$-simplex. Such an intersection has dimension $3-k$ over $\Spec(\overline{\F}_p)$.
      \item Let $x^{[p-2]}$ be a closed point of $\sA_1^{[p-2]}$  and  $\set{e_1, \dots, e_t}$ be the multiset of the multiplicities of the irreducible components that $x^{[p-2]}$ lies on. Then there is an \'{e}tale neighborhood of $x^{[p-2]}$ of the form
       $$\Spec(\sO_{\breve E}[x_1, x_2, x_3, x_4]/(x_1^{e_1} \cdots x_t^{e_t} - p)).$$
      \item The following table gives the image of each irreducible component under the map $\sA_1^{[p+1]} \rightarrow \sA_0$.
      \begin{center}
       \begin{tabular}{c|L{7cm}}
        Component & Image\\
        \hline
        Front & Each irreducible component surjects onto a distinct connected component of $\overline{\sA_{s_0s_2\tau}}$\\
        \hline
        Sides & These two irreducible components surject onto the irreducible components $\sZ_{01}$ and $\sZ_{10}$ respectively.\\
        \hline
        Vertex labeled $2p-2$ & Surjects onto $\sZ_{11}$.\\
        \hline
        Vertex labeled $1$ & Surjects onto $\sZ_{00}$.\\
        \hline
        Batons & Fix a baton $B$. The irreducible components corresponding to the vertices in $B$ all surject onto the same connected component of $\overline{\sA_{s_2\tau}}$. This induces a bijection between the set of batons and the set of connected components of $\overline{\sA_{s_2\tau}}$.
       \end{tabular}
       \captionof{table}{Images of irreducible components}
      \end{center}
     \end{compactenum}
    \end{theorem}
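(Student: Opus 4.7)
The plan is to leverage the local model work already done (Proposition \ref{Blowup} and Lemma \ref{tU_1 irreducible components}) together with the global counting of the preceding proposition, and then translate the local picture into the global assertions via the \'etale local model correspondence.

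\textbf{Regularity, nonreduced NCD, resolution, and item (iii).} Each step of the construction of $\sA_1^{[p+1]}$ preserves the \'etale local model property: Step II is a fiber product with $\sA_1$, while Steps I, III, and IV are blowups of \'etale-locally corresponding centers. Thus $U_1^{[p+1]}$ is an \'etale local model of $\sA_1^{[p+1]}$. On the two charts $X_{10}, X_{11}$ of $U_1'''$ where the Step IV center fails to be Cartier, Proposition \ref{Blowup} shows that $Y^{[p-2]}$ is regular with special fiber a divisor with normal crossings; on the remaining charts the successive centers are Cartier and the blowups are isomorphisms, so the regularity already established in Section \ref{Description of the local model U_1'''} persists. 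Since regularity and the nonreduced NCD condition are \'etale-local, they transfer to $\sA_1^{[p+1]}$. Each blowup center lies in the special fiber, so $\sA_1^{[p+1]} \to \sA_1$ is a birational proper morphism, i.e.\ a resolution. For item (iii), the explicit coordinate charts in the proof of Proposition \ref{Blowup} are exactly of the form $\Spec(\sO_{\breve E}[x_1, x_2, x_3, x_4]/(x_1^{e_1}\cdots x_t^{e_t} - p))$, with exponents matching the multiplicities of the irreducible components through the point, read off from Lemma \ref{tU_1 irreducible components}.

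\textbf{Items (i) and (ii): identification of vertices, multiplicities, and intersections.} The preceding proposition already gives the count $4 + n_{s_0s_2\tau} + (p-2)n_{s_2\tau}$, matching the vertex count of $\Gamma_{K^p}$. I would assign vertices to components by tracking through the four steps: the unique vertex labeled $1$ is the strict transform of $\sZ_{00}$ (locally $Z(a)$, multiplicity $1$); the two side vertices labeled $p-1$ are the strict transforms of $\sZ_{01}$ and $\sZ_{10}$ (locally $Z(\widetilde{u}), Z(\widetilde{v})$, multiplicity $p-1$); the vertex labeled $2p-2$ is the Step III exceptional component lying above $\sZ_{11}''$ traced through Step IV (locally $Z_1$, multiplicity $2p-2$); the $n_{s_0s_2\tau}$ front vertices are the Step III exceptional components above the $n_{s_0s_2\tau}$ two-dimensional components of $\sZ_3''$ (locally $Z_p$, multiplicity $p-1$); the $(p-2)\cdot n_{s_2\tau}$ baton vertices arise from the $p-2$ Step IV blowups, each contributing one component per connected component of its center $\sC_i^{[i-1]}$, and the proof of the preceding proposition showed these centers have $n_{s_2\tau}$ disjoint connected components, producing $n_{s_2\tau}$ parallel chains (locally $Z_2, \dotsc, Z_{p-1}$ with multiplicities $2p-3, \dotsc, p$). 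For intersections, use that every component of $\sA_0 \otimes \overline{\F}_p$ contains the unique worst point $\sA_\tau$ and that $\pi^{-1}(\sA_\tau)$ is a single closed point (since $\sA_\tau$ has $p$-rank zero); every global component of $\sA_1^{[p+1]} \otimes \overline{\F}_p$ therefore passes through the unique fiber above this point, and by Lemma \ref{fiber correspondence} the local dual complex of Lemma \ref{tU_1 irreducible components} at this fiber realizes every pairwise global intersection. Distinct batons do not share any non-boundary vertex because their generating centers are disjoint connected components.

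\textbf{Item (iv) and main obstacle.} Images are obtained by tracking each component through the four steps: strict transforms of $\sZ_{ij}$ map onto $\sZ_{ij}$; the vertex labeled $2p-2$ maps onto $\sZ_{11}$ because the Step III exceptional above $\sZ_{11}''$ lies over $\sZ_{11}$; the front vertices map onto the connected components of $\overline{\sA_{s_0s_2\tau}}$, since by Propositions \ref{subschemes of sA_0 as KR-strata} and \ref{Connected components A_0} these are precisely the two-dimensional components of $\sZ_3''$; the batons map onto connected components of $\sC_4 = \overline{\sA_{s_2\tau}}$ since each Step IV center $\sC_i^{[i-1]}$ lies above $\sC_4$, and a single baton corresponds to a single connected component by construction. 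The most delicate point of the proof is the combinatorial assembly in the second paragraph: one must verify that all $n_{s_2\tau}$ batons share precisely the same set of non-baton neighbors (namely the vertex labeled $1$, the two sides, the vertex labeled $2p-2$, and the front vertices) and do not intersect each other. This requires carefully combining the local dual-complex data from Lemma \ref{tU_1 irreducible components} with the global fact that all batons meet near $\sA_\tau$, while their interiors are produced from disjoint connected components of $\overline{\sA_{s_2\tau}}$; the dimension-of-fiber arguments used in the proof of the preceding proposition carry over to show that the relevant global intersection patterns faithfully reflect the local one.
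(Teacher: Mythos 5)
The paper gives no explicit proof block for this theorem; it is stated as the culmination of Steps 0 through IV of Section 5, and the implicit proof is exactly the accumulation of Propositions \ref{Blowup}, \ref{prop connected and irred components of sA_1'''}, the preceding irreducible-component count, Lemma \ref{tU_1 irreducible components}, Lemma \ref{fiber correspondence}, and the bookkeeping of \'etale-locally corresponding subschemes. Your proposal reconstructs precisely this argument: transferring regularity and the nonreduced NCD condition via the local model diagram, identifying each vertex of $\Gamma_{K^p}$ with its component (with the correct multiplicities from Lemma \ref{tU_1 irreducible components}), deducing the global intersection pattern from the local dual complex together with the connected-component data of the blowup centers, and tracking images via the KR-strata identifications of Proposition \ref{subschemes of sA_0 as KR-strata} --- so this matches the paper's approach, with the combinatorial assembly you flag as delicate being genuinely the place where the paper is also terse.
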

 \section{A Resolution of $\sAGSp_{1,\bal}$ for $n=2$}
  \label{sect Iw_1^bal(p) resolution}
		We produce here a similar resolution as in the previous section in the case of the integral model of the Siegel modular varieties associated with $\Iw_1^\bal(p)$-level subgroup. However, we give only an outline of how to proceed. To simplify the notation, set $\sA_1 = \sAGSp_{1,\bal}$ and $U_1 = U_{1, \bal}^\GSp$. Begin by taking the normalization $\sA_1^\norm$ of $\sA_1$. As mentioned at the end of Section \ref{subsect Iw_1^bal(p)-level subgroup}, the normalization of $U_1$ is the spectrum of the ring $\Z_p[x,y,a,b,c,u_0,u_1,u_2,u_3]$ modulo the ideal generated by
		 $$xy - p, \qquad ax+by+abc, \qquad u_0u_3-u_1u_2,$$
		 $$u_0^{p-1}-x, \quad u_1^{p-1}-x-bc, \quad u_2^{p-1}-y-ac, \quad u_3^{p-1}-y.$$	
		We begin by naming the subschemes of $\sA_0$ that will transform to either be the center or true center of a blowup, using the notation as in previous section.
		\begin{center}
			\begin{tabular}{rcl|rcl}
				\multicolumn{3}{c|}{Local model $U_0$} & \multicolumn{3}{|c}{Integral model  $\sA_0$}\\
				\hline
				$Z_1$ &$=$& $Z(x,b)$ & $\sZ_1$ &$=$& $\sZ_{11}$\\
				$C_1$ &$=$& $Z(x,y,a,b)$ & $\sC_1$ &$=$& $\sZ_{00} \cap \sZ_{01} \cap  \sZ_{10} \cap \sZ_{11}$\\
				$Z_3$ &$=$& $Z(x,bc)$ & $\sZ_3$ &$=$& $\sZ_{11} \cup (\sZ_{01} \cap \sZ_{10})$\\
				$C_3$ &$=$& $Z(x,y,ac,bc)$ & $\sC_3$ &$=$& $\sZ_{01} \cap \sZ_{10}$\\
				$Z_4$ &$=$& $Z(x(x+bc), y(y+ac))$ & $\sZ_4$ &$=$& $\sZ_{01} \cup \sZ_{10}$\\
				$C_4$ &$=$& $Z(x,y,ac,bc)$ & $\sC_4$ &$=$& $\sZ_{01} \cap \sZ_{10}$\\
				$Z_5$ &$=$& $Z(x,b)$ & $\sZ_5$ &=& $\sZ_{11}$\\
				$C_5$ &$=$& $Z(x,y,b,c)$ & $\sC_5$ &=& $\overline{(\sZ_{01} \cap \sZ_{10} \cap \sZ_{11}) \setminus \sZ_{00}}$
			\end{tabular}
			\captionof{table}{Subschemes of $U_0$ and $\sA_0$}
		\end{center}
		
		The first three steps in constructing the resolution are precisely the same as before, namely $\sA_0' = \Bl_{\sZ_1}(\sA_0)$, $\sA_1'' = \sA_1^{\norm} \times_{\sA_0} \widetilde{\sA_0}$, and $\sA_1''' = \Bl_{\sZ_3''}(\sA_1'')$. Performing the corresponding constructions on the local models, $\sZ_3''$ corresponds to the subscheme $Z_3'' = Z(u_0, u_1) \subset U_1''$. Next $\sA_1^{[4]} = \Bl_{\sZ_4'''}(\sA_1'')$, where $\sZ_4'''$ \'{e}tale locally $\sZ_4'''$ corresponds to the subscheme $Z_4''' = Z(u_0,u_2) = Z(u_1,u_3)$.
		
		The scheme $U_1^{[4]}$ is covered by eight standard affine open charts. Four of these charts are regular with special fiber a nonreduced divisor with normal crossings. The blowups which are to follow induce isomorphisms over these four charts. The other four charts take the form the spectrum of
			$$Y = \breve{\Z}_p[x_1, x_2, a,b, u]/(u^{p-1}x_1^{p-1}x_2^{p-1} - p, u^{p-1}-ab).$$
		\'{E}tale locally, $\sZ_5^{[4]}$ corresponds to $Z(u,a)$ in each of these charts. Similar to Proposition \ref{Blowup}, we modify $Y$ by successively blowing up $Z(u,a)$ and its strict transforms in each resulting modification, doing this a total of $p-2$ times. The resulting resolution $\widetilde{\sA_1}$ is regular with special fiber a nonreduced divisor with normal crossings. By tracking the subschemes mentioned above through this construction, one may describe various properties of the irreducible components of the special fiber of $\widetilde{\sA_1}$ as in Theorem \ref{thm description of resolution of sA_1}. We merely note that all of the irreducible components of the special fiber of the resolution have multiplicity $p-1$.

\bibliography{refs}{}
\bibliographystyle{alphanum}

\end{document}